\documentclass[12pt]{amsart}
\usepackage{amssymb,amsfonts,amsmath}
\usepackage{hyperref}

\usepackage[margin=0.7in]{geometry}

\usepackage{mathrsfs}
\usepackage[all]{xy}

\newcommand{\cC}{\mathcal{C}}
\newcommand{\cA}{\mathcal{A}}
\newcommand{\cF}{\mathcal{F}}
\newcommand{\cM}{\mathcal{M}}
\newcommand{\cB}{\mathcal{B}}
\newcommand{\cD}{\mathcal{D}}
\newcommand{\cO}{\mathcal{O}}
\newcommand{\cI}{\mathcal{I}}

\newcommand{\cH}{\mathcal{H}}
\newcommand{\cJ}{\mathcal{J}}
\newcommand{\cL}{\mathcal{L}}
\newcommand{\cP}{\mathcal{P}}

\renewcommand{\ss}{\mathbf{s}}
\newcommand{\cc}{\mathbf{c}}
\newcommand{\hh}{\mathbf{h}}

\newcommand{\sln}{\mathfrak{sl}}

\newcommand{\fc}{\mathfrak{c}}
\newcommand{\fp}{\mathfrak{p}}
\newcommand{\fS}{\mathfrak{S}}
\newcommand{\fh}{\mathfrak{h}}
\newcommand{\fg}{\mathfrak{g}}

\newcommand{\fP}{\mathfrak{P}}

\newcommand{\CC}{\mathbb{C}}
\newcommand{\QQ}{\mathbb{Q}}

\newcommand{\ZZ}{\mathbb{Z}}

\DeclareMathOperator{\fin}{fin}

\DeclareMathOperator{\Ind}{Ind}
\DeclareMathOperator{\Gr}{Gr}
\DeclareMathOperator{\modd}{-mod}
\DeclareMathOperator{\KZ}{KZ}

\DeclareMathOperator{\Vect}{Vect}
\DeclareMathOperator{\GL}{GL}
\DeclareMathOperator{\gln}{\mathfrak{gl}}
\DeclareMathOperator{\Spec}{Spec}
\DeclareMathOperator{\Hom}{Hom}
\DeclareMathOperator{\gr}{gr}
\DeclareMathOperator{\End}{End}
\DeclareMathOperator{\reg}{reg}

\DeclareMathOperator{\Res}{Res}
\DeclareMathOperator{\rank}{rank}
\DeclareMathOperator{\Supp}{Supp}
\DeclareMathOperator{\Irr}{Irr}
\DeclareMathOperator{\HC}{HC}
\DeclareMathOperator{\Ann}{Ann}
\DeclareMathOperator{\Loc}{Loc}

\DeclareMathOperator{\CoInd}{CoInd}
\DeclareMathOperator{\opp}{opp}
\DeclareMathOperator{\cont}{cont}

\DeclareMathOperator{\Tr}{Tr}

\newtheorem{theorem}{Theorem}[section]

\newtheorem{prop}[theorem]{Proposition}
\newtheorem{definition}[theorem]{Definition}
\newtheorem{cor}[theorem]{Corollary}
\newtheorem{lemma}[theorem]{Lemma}
\newtheorem{example}[theorem]{Example}

\author{Huijun Zhao}
\address{Department of Mathematics, Northeastern University. Boston, MA 02115. USA.}
\email{zhao.huij@husky.neu.edu}

\title[Cyclotomic Rational Cherednik Algebras with Aspherical Parameters]{Representations of Cyclotomic Rational Cherednik Algebras with Aspherical Parameters}

\begin{document}
\maketitle

\begin{abstract}
	In this article, we describe all two sided ideals of a cyclotomic rational Cherednik algebra $H_\cc$ and its spherical subalgebra $eH_\cc e$ with a Weil generic aspherical parameter $\cc$, and further describe the simple modules in the category $\cO_\cc^{sph}$. The main tools we use are categorical Kac-Moody actions on catogories $\cO_\cc$ and restriction functors for Harish-Chandra bimodules. 
\end{abstract}

\tableofcontents
\section{Introduction}
Rational Cheredinik algebras $H_\cc(W)$ (also known as rational DAHA)  were introduced by Etingof and Ginzburg in \cite[Section 4]{eg}. They are defined for complex reflection groups $W$ depending on a parameter $\cc$, which is a conjugation invariant function on the set of complex reflections in $W$. In this article we only consider the algebras defined for {\it cyclotomic} complex reflection groups $G(\ell,1,n)$ (see Section \ref{rca} for the explicit definitions).

For a Zariski open subset of all possible parameters $\cc$, $H_\cc$ is Morita equivalent to the {\it spherical subalgebra} $eH_\cc e$, where $e=\frac{1}{|W|}\sum_{w\in W}w\in H_\cc$ denotes the averaging idempotent. Those $\cc$ are called {\it spherical}. In this article, we study the structure of $H_\cc$ with {\it aspherical} (i.e., not spherical) $\cc$, and obtain some results about representations of $eH_\cc e$.

More precisely, we study subcategories of $H_\cc\modd$ and $eH_\cc e\modd$, the category $\cO_\cc(W)$ and $\cO_\cc^{sph}(W)$ (recalled in Section \ref{O} and \ref{sphO}, respectively), analogous to the classical BGG category $\cO$ for semisimple Lie algebras. In fact, there is a functor $H_\cc\modd\to eH_\cc e\modd$ given by $M\mapsto eM=M^W$, restricting to $\cO_\cc(W)\to \cO_\cc^{sph}(W)$, which induces a surjective map on the Grothendieck groups. Moreover, the simples in $\cO_\cc(W)$ are completely known (to be recalled in Section \ref{O}). So we will focus on identifying the kernel of the above functor, i.e., the modules annihilated by $e$.

Losev computed in \cite{Lsupport} the supports for all simple modules in $\cO_\cc(W)$ (further elaborated by Gerber in \cite{ge}, Jacon-Lecouvey in \cite{jl} and Geber-Norton in \cite{gn}), using the Kac-Moody action (constructed by Shan in\cite{s}) and the Heisenberg action (constructed by Shan and Vasserot in\cite{sv}) on the category $\cO_\cc(W)$, together with some combinatorial techniques. The classification of all {\it singular} simples (i.e., those having 0-dimensional supports) follows from the computation.

On the other hand, the spherical subalgebra $eH_\cc(W)e$ of $H_\cc(W)$ has another realization related to quantized quiver varieties.

Let $Q$ be a quiver, $v$ be a dimension vector and $w$ be a framing. The cotangent bundle of the coframed representation space $T^*R=T^*R(Q,v,w)$ has a symplectic structure and a natural $G=\Pi_{i}\GL_{v_i}$-action on it. Therefore the quantum Hamiltonian reduction, with the parameter $\lambda\in \CC^{Q_0}$ (with $\theta$ being a {\it generic} character of $G$, to be elaborated in Section \ref{qqv})
$$ \cA^\theta_\lambda(v,w):=[D(R)/D(R)\{x_R-\langle\lambda,x\rangle|x\in\fg\}|_{T^*R^{\theta-ss}}]^G $$ 
is a sheaf on the Nakajima quiver variety $\cM^\theta_0(v,w)$, whose algebra of global sections is denoted as 
$$ \cA_\lambda(v,w)=\Gamma(\cA^\theta_\lambda(v,w)). $$
 
A computation of $\dim K_0(\cA_\lambda(v,w)\modd_{\fin})$, i.e., the number of all simple finite dimensional modules over $\cA_\lambda(v,w)$, is known when $Q$ is of finite type or of affine type with parameter $\lambda\in\CC^{Q_0}$ being {\it regular}, i.e., the algebra $\cA_\lambda(v,w)$ has finite homological dimension, see \cite{bl}. The derived global section functor $R\Gamma_\lambda^\theta:D^b(\cA_\lambda^\theta(v,w)\modd)\to D^b(\cA_\lambda(v,w)\modd)$, as McGerty and Nevins checked in \cite{mn}, is an equivalence if and only if $\lambda$ is regular. It is expected that $R\Gamma_\lambda^\theta$ is a quotient functor for a singular $\lambda$. 

In the Cherednik case (when $Q$ is cyclic of affine type A  with certain $v$ and $w$ to be specified in Section \ref{cm}), $\cA_\lambda(v,w)\cong eH_\cc e$, where the parameter $\cc$ can be recovered from $\lambda$ (see Theorem \ref{quancm} below), and there is a derived equivalence  $D^b(\cA_\lambda^\theta(v,w)\modd)\simeq D^b(H_\cc\modd)$ (\cite[Section 5]{gl}). It follows from \cite[Theorem 5.5]{e} that $\lambda$ is singular if and only if $\cc$ is aspherical. Furthermore, for cyclotomic rational Cherednik algebras, Dunkl and Griffeth (\cite{dg}) give a description of the aspherical locus of the parameters $\cc$. Namely, the locus of all aspherical $\cc=(c_0,d_j, j\in\ZZ)$ with $d_j=d_{j'}$ if $j=j'$ (mod $\ell$) is a finite union of hyperplanes of the following forms (see Theorem \ref{dg}). 
\begin{itemize}
	\item $c_0=-\frac{k}{m}$ for integers $k$ and $m$ satisfying $1\leq k<m\leq n$ or
	\item there is an integer $j$ with $1\leq j\leq\ell-1$, an integer $m$ with $-(n-1)\leq m\leq (n-1)$ and an integer $k$ such that $k\neq 0$ (mod $\ell$), and
	$$ k=d_j-d_{j-k}+\ell mc_0 \text{, and } 1\leq k\leq j+\lfloor \sqrt{n+\frac{1}{4}m^2}-\frac{1}{2}m-1 \rfloor\ell. $$ 
\end{itemize}

As an application of the restriction functor on Harish-Chandra bimodules over (families of) quantized quiver varieties constructed in \cite[Section 3.3]{bl}, we get a description of all two-sided ideals in $eH_\cc e$ for a {\it Weil generic} aspherical $\cc$ (meaning $\cc$ lies outside countably many algebraic subvariety of the aspherical locus). We will see that the two-sided ideals form a chain (Corollary \ref{cor}) and every two-sided ideal is the annihilator of some simple module in category $\cO$. Using this we will give a description of the simples killed by $e$. 

Our main result is the following.
\begin{theorem}
	For the cyclotomic RCA $H_\cc(G(\ell,1,n))$, with the parameter $$\cc=(c_0,d_0,d_1,\cdots,d_{\ell-1})$$ being Weil generic on the aspherical hyperplane $d_i-d_j-\ell mc_0=k$ $(m,k\in\ZZ)$, the averaging idempotent $e\in H_\cc(G(\ell,1,n))$ annihilates all simples in the category $\cO_\cc$ whose supports are of dimensions not exceeding $n-(p+1)(|m|+p+1)$, where $p=\max\{0,\frac{i-j-k}{\ell}\}$ with $\frac{i-j-k}{\ell}\in\ZZ$ by Theorem \ref{dg}. The kernel of the map $K_0(\cO_\cc)\twoheadrightarrow K_0(\cO_\cc^{sph})$ is generated by all classes of such simples. 
\end{theorem}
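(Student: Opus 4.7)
The plan is to reduce the problem to identifying the primitive ideal $J := H_\cc e H_\cc \subset H_\cc$ (equivalently, its image $eJe \subset eH_\cc e$) and characterizing the simples $L \in \cO_\cc$ whose annihilator contains $J$, which is exactly the condition $eL = 0$. Once this is done, the second assertion about $K_0$ follows automatically, since the kernel of the exact functor $M \mapsto eM$ between abelian categories is spanned by the classes of simples it annihilates. So the work is concentrated in the first assertion.

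I would begin by invoking Corollary \ref{cor} (to be established earlier in the paper): for Weil generic $\cc$ on a single aspherical hyperplane, the two-sided ideals of $eH_\cc e$ form a finite chain, and each is the annihilator of a simple in $\cO_\cc^{sph}$. Via the Gan--Ginzburg/Gordon--Losev isomorphism $eH_\cc e \cong \cA_\lambda(v,w)$, two-sided ideals correspond (Weil generically) to unions of symplectic leaves of the Nakajima variety $\cM_0^\theta(v,w)$, and the ideal chain reflects the closure order on leaves that become ``singular'' when $\lambda$ hits the aspherical hyperplane. The key tool here is the restriction functor for Harish-Chandra bimodules from \cite[Section 3.3]{bl}: restricting $H_\cc / J$ to a formal slice at a point of a leaf $\cL$ produces a Harish-Chandra bimodule over a smaller quantized quiver variety of the same cyclotomic type (i.e.\ some $G(\ell,1,n')$ with $n'<n$), with Weil generic parameter on the induced aspherical hyperplane. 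An induction on $n$ then pins down exactly which leaf $\cL_*$ is the smallest one for which $H_\cc/J$ does not restrict to zero.

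Next I would identify $\cL_*$ explicitly. The aspherical hyperplane $d_i - d_j - \ell m c_0 = k$ forces a particular singular block structure on the reduction, governed by the affine type $A$ root datum: the integer $p = \max\{0, (i-j-k)/\ell\}$ measures the ``multiplicity'' of the singular root, and a direct calculation with the moment map and the Crawley-Boevey formula for leaf dimensions on $\cM_0^\theta(v,w)$ yields $\dim_\CC \cL_* = 2\bigl(n - (p+1)(|m|+p+1)\bigr)$. By Gabber's involutivity (or Ginzburg's Bernstein inequality for the associated graded), the support of a simple $\cO_\cc$-module on $\cM_0^\theta(v,w)$ is a closed coisotropic subvariety, hence half-dimensional on each leaf in its closure, which translates on the Cherednik side via the support-dimension dictionary (\cite{Lsupport}, \cite{ge}) to the bound $n - (p+1)(|m|+p+1)$.

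Putting it together: a simple $L \in \cO_\cc$ is killed by $e$ iff $\Ann_{H_\cc}(L) \supset J$, iff the leaf supporting the associated $\cA_\lambda^\theta$-module is contained in $\overline{\cL_*}$, iff $\dim \Supp L \leq n - (p+1)(|m|+p+1)$. The main obstacle is the explicit dimension calculation for $\cL_*$: one must correctly match the abstract ideal $J = H_\cc e H_\cc$ with a concrete leaf, which requires combining the Kac--Moody/Heisenberg combinatorics governing supports with the geometry of the stratification of $\cM_0^\theta(v,w)$, and verifying compatibility of the restriction functor on both sides. All other steps are either formal consequences of the chain structure from Corollary \ref{cor} or standard applications of the Harish-Chandra restriction machinery.
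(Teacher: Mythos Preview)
Your overall strategy --- identify the threshold leaf $\cL_*$ via Harish-Chandra restriction and read off the support bound --- is in the right spirit, but there is a genuine error in the key step. You claim that restricting to a formal slice at a point of a leaf ``produces a Harish-Chandra bimodule over a smaller quantized quiver variety of the same cyclotomic type (i.e.\ some $G(\ell,1,n')$ with $n'<n$),'' and then propose an induction on $n$. This is false: for Weil generic $\cc$ on a single aspherical hyperplane, the slice quiver $\hat{Q}$ computed from the Crawley-Boevey decomposition has \emph{one vertex and no loops}, so the slice algebra is $\cD_{t-s}(\Gr(s,|m|+2s))$, a twisted differential operator algebra on a Grassmannian, not a cyclotomic Cherednik algebra. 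The induction on $n$ within the cyclotomic family therefore does not get off the ground, and without it you have no mechanism to count the ideals of $eH_\cc e$ or to locate $\cL_*$.

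The paper's route is different and is forced by this slice computation. First (Section~\ref{support}) it classifies the possible support dimensions $p_r = n - r(|m|+r)$, $0\le r\le q$, of simples in $\cO_\cc$ via the crystal combinatorics, and builds a chain $\{0\}=\cJ_0\subsetneq\cdots\subsetneq\cJ_q\subsetneq\cJ_{q+1}=H_\cc$ where $\cJ_r$ is the intersection of annihilators of simples with support dimension $\le p_r$. Second (Section~\ref{ideal}) it uses the restriction functor to the Grassmannian slice to establish a bijection between two-sided ideals of $eH_\cc e$ and those of $\cD_{t-q}(\Gr(q,|m|+2q))$; the latter are classified directly (Theorem~\ref{singulargrass}), yielding a chain of length $p+2$ in $eH_\cc e$ with $p=\max\{0,t\}$. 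Third (Section~\ref{sphO}) it compares the two chains: one shows $e\cJ_i=\tilde{\cJ}_i$ for $i\le p$ by matching associated-variety dimensions, hence $e\in\cJ_{p+1}\setminus\cJ_p$, which gives exactly the threshold $n-(p+1)(|m|+p+1)$. The Grassmannian detour is not optional --- it is where the integer $p$ is actually produced --- and your proposal skips it.
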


The paper is organized as follows. In Chapter 2, we introduce the definitions and properties of the cyclotomic rational Cherednik algebras and of the quantized quiver varieties. Also we recall the description of the aspherical parameters from \cite{dg}, and describe the aspherical locus using different systems of parameters. In Chapter 3, we recall the categorical Kac-Moody actions and the related crystal structures. These two chapters contain no new results. Chapter 4 is about the techniques to compute the supports of the simples in $\cO_\cc$. We apply these techniques to identify all singular simples, which will lead to the description of two-sided ideals in $H_\cc$. Chapter 5 is about the description of two-sided ideals in $eH_\cc e$. We first describe the ideals in the algebra of (twisted) differential operators on the Grassmannian varieties (Theorem \ref{regulargrass}, \ref{singulargrass}) and then prove that they are in bijection with the ideals in $eH_\cc e$ (Theorem \ref{idealbijection}). The main tool we use is the restriction functor for Harish-Chandra bimodules. Finally in Chapter 6, we compare the ideals in $H_\cc$ and in $eH_\cc e$, and obtain the main result.

 \subsection*{Acknowledgement}  
 This paper would have never appeared without the help of Ivan Losev. The author is very grateful to him for countless discussions, remarks and suggestions. The author also would like to thank Jose Simental for stimulating and helpful discussions.

\section{Cherednik algebras and quantized quiver varieties}\label{notation}
In this section, we introduce the notation and the preliminaries. In particular, we recall the description of the aspherical locus of Cherednik algebras (Theorem \ref{dg}), and the correspondence between parameters of the Cherednik algebras, and of the quantized quiver varieties (Theorem \ref{classcalcm} and \ref{quancm}).

 \subsection{Cyclotomic rational Cherednik algebras} \label{rca}
   Let $\fh$ be a complex vector space equipped with an action of a finite {\it complex reflection group} $W$. By a complex reflection group, we mean a group generated by the subset $S\subseteq W$ consisting of {\it complex reflections}, i.e., the elements $s\in W$ such that $\rank(s-1)=1$.
     
   A {\it rational Cherednik algebra} (RCA) depends on the parameters $t\in \CC$ and $\cc:S\to\CC$, a conjugation invariant function. We denote by $\fc$ the space of all possible parameters $\cc$. Clearly $\fc$ is of dimension $|S/W|$, the number of $W$-conjugacy classes in $S$.
    
   For $s\in S$, let $\alpha_s\in\fh^*$ and $\alpha_s^\vee\in\fh$ be the eigenvectors for $s$ with eigenvalues different from 1 (analogs to roots and coroots for Weyl groups). We partially normalize them by requesting that $\langle \alpha_s,\alpha_s^\vee\rangle=2$.
    
   Recall that for an algebra $A$ with a finite group $\Gamma$ acting on it, we can form the {\it smash product} algebra $A\#\Gamma$. It is the same as $A\otimes\CC\Gamma$ as a vector space while the product is given by $(a_1\otimes\gamma_1)\cdot(a_2\otimes\gamma_2)=a_1\gamma_1(a_2)\otimes\gamma_1\gamma_2$, where $\gamma_1(a_2)$ denotes the image of $a_2$ under the action of $\gamma_1$. The product is given so that an $A\#\Gamma$-module is the same as a $\Gamma$-equivariant $A$-module.

   Now a RCA $H_{t,\cc}(W,\fh)$ is defined to be the quotient of $T(\fh\oplus \fh^*)\# W$ by the relations
   $$ [x,x']=[y,y']=0; [y,x]=t\langle y,x\rangle-\sum_{s\in S}\cc(s)\langle\alpha_s^\vee,x\rangle \langle\alpha_s,y\rangle s, $$
   where $x,x'\in\fh^*$ and $y,y'\in\fh$. Note that for any $k\in\CC^\times$, $H_{t,\cc}(W,\fh) \cong H_{kt,k\cc}(W,\fh)$. So we can assume $t=0$ or $t=1$. From now on, we will write $H_\cc$ shortly for $H_{1,\cc}(W,\fh)$.
   
   We can take $\fh$ to be a complex vector space of dimension $n$, and $W$ to be the {\it cyclotomic} complex reflection groups  $G(\ell,1,n):=\fS_n\ltimes\ZZ_\ell^n$, where $\fS_n$ denotes the symmetric group of $n$ letters and $\ZZ_\ell\subseteq\CC^\times$ denotes the group of $\ell$-th roots of unity. $\fS_n$ acts on $\ZZ_\ell^n$ via permutation and the semi-direct product is defined accordingly. $W$ acts on $\fh$ in the following way: $\fS_n$ acts via permuting the coordinates and each factor $\ZZ_\ell$ acts on its own copy of $\CC$ by multiplication by roots of unity. The RCA $H_\cc(G(\ell,1,n), \CC^n)$ is called {\it cyclotomic rational Cherednik algebra}.
    
   The algebra $H_\cc$ is filtered with $\deg \fh^*=\deg W=0$, $\deg \fh=1$. We have a natural epimorphism $S(\fh\oplus\fh^*)\#W\to\gr(H_\cc)$. The following fundamental result is due to Etingof and Ginzburg (\cite{eg}).
   \begin{theorem}[PBW Property of RCA]\label{pbw}
   	The epimorphism $S(\fh\oplus\fh^*)\#W\to\gr(H_\cc)$ is an isomorphism.
   \end{theorem}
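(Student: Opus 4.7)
The plan is to follow the original Etingof--Ginzburg strategy via the \emph{Dunkl representation}. Surjectivity of $S(\fh\oplus\fh^*)\#W\twoheadrightarrow \gr H_\cc$ is immediate from the defining relations: the correction term in $[y,x]$ lies in filtration degree $0$, so $x$ and $y$ commute in $\gr H_\cc$; together with $[x,x']=[y,y']=0$ and the smash-product relations this produces all of $S(\fh\oplus\fh^*)\#W$. The substance of the theorem is injectivity.

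For injectivity, I would realize $H_\cc$ as operators on $\CC[\fh^{\reg}]$, where $\fh^{\reg}\subseteq\fh$ is the complement of the reflection hyperplanes $\{\alpha_s=0\}_{s\in S}$. Let $W$ act naturally, let $x\in\fh^*$ act by multiplication, and for $y\in\fh$ let $y$ act via the \emph{Dunkl operator}
$$D_y(f)=\partial_y f-\sum_{s\in S}\kappa_s\frac{\langle\alpha_s,y\rangle}{\alpha_s}\bigl(f-s(f)\bigr),$$
where $\kappa_s$ is an explicit scalar multiple of $\cc(s)$ depending on the nontrivial eigenvalue of $s$ on $\alpha_s$. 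The operator preserves $\CC[\fh^{\reg}]$, and indeed $\CC[\fh]$, since $\alpha_s$ divides $f-s(f)$. Checking that this gives a homomorphism $\rho\colon H_\cc\to\End(\CC[\fh^{\reg}])$ amounts to: $[x,x']=0$ is trivial; the $W$-equivariance $wD_yw^{-1}=D_{w(y)}$ follows from $W$-invariance of $\cc$; and $[D_y,x]=\langle y,x\rangle-\sum_s\cc(s)\langle\alpha_s^\vee,x\rangle\langle\alpha_s,y\rangle\,s$ is a short computation using that $x-s(x)$ is a scalar multiple of $\alpha_s$ proportional to $\langle\alpha_s^\vee,x\rangle$; this calculation also pins down $\kappa_s$.

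The main obstacle is the commutation $[D_y,D_{y'}]=0$. Expanding the commutator, single-sum terms cancel by skew-symmetry in $y,y'$, leaving a double sum over ordered pairs of distinct reflections $(s,s')$. Grouping this sum by the rank-two parabolic subgroups generated by each pair $\{s,s'\}$ reduces the identity to a rank-two check inside each such subgroup, which can be verified by direct enumeration via the Shephard--Todd classification of rank-two complex reflection groups. This is the nontrivial root-theoretic input and is precisely what forces the specific form of the Cherednik relation.

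With $\rho$ in hand, I would conclude as follows. Its image lies in $D(\fh^{\reg})\#W$, where $D(\fh^{\reg})$ is the algebra of polynomial differential operators on $\fh^{\reg}$, since each $D_y$ is manifestly a first-order differential operator on $\fh^{\reg}$ twisted by $W$. Giving $H_\cc$ the filtration of the theorem ($\deg\fh=1$, $\deg\fh^*=\deg W=0$) and $D(\fh^{\reg})\#W$ the order filtration, the map $\rho$ becomes filtered. Its associated graded is a map $\gr H_\cc\to\CC[T^*\fh^{\reg}]\#W$ whose pullback along the surjection $S(\fh\oplus\fh^*)\#W\twoheadrightarrow\gr H_\cc$ is the restriction-to-$\fh^{\reg}$ map $\CC[T^*\fh]\#W\hookrightarrow\CC[T^*\fh^{\reg}]\#W$, which is injective. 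Consequently the surjection $S(\fh\oplus\fh^*)\#W\twoheadrightarrow\gr H_\cc$ has trivial kernel, proving the PBW property.
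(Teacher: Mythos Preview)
Your argument is correct. The paper does not give its own proof of this theorem; it simply attributes the result to Etingof--Ginzburg \cite{eg}. Your Dunkl-operator approach is a valid and standard proof, and in fact the homomorphism $\rho$ you construct is exactly the Dunkl homomorphism the paper later invokes in the section on the localization functor (Lemma~\ref{easy}).

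It is worth noting that the proof in \cite{eg} itself takes a different route: Etingof and Ginzburg prove PBW for the larger class of \emph{symplectic reflection algebras} (of which rational Cherednik algebras are the special case attached to $V=\fh\oplus\fh^*$), using a Koszul deformation criterion in the spirit of Braverman--Gaitsgory. That argument never sees Dunkl operators; its advantage is generality (it applies when there is no polarization $V=\fh\oplus\fh^*$), while your approach has the virtue of being concrete and of producing, for free, the faithful polynomial representation and the localization isomorphism used later in the paper.

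One small remark on your sketch: the commutativity $[D_y,D_{y'}]=0$ for arbitrary complex reflection groups is due to Dunkl--Opdam, and their rank-two verification is carried out by a uniform argument rather than a literal case-by-case run through the Shephard--Todd list. Your reduction to rank-two parabolics is correct, but citing Dunkl--Opdam for the endgame would be cleaner than invoking the classification.
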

   The following corollary of Theorem \ref{pbw} was observed in \cite{ggor}.
   \begin{cor}[Triangular Decomposition]\label{triang}
   	The multiplication map $S(\fh)\otimes \CC W\otimes S(\fh^*)\to H_\cc$ is an isomorphism of vector spaces.
   \end{cor}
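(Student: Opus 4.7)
The plan is to deduce the triangular decomposition directly from the PBW isomorphism of Theorem \ref{pbw}, by comparing the filtered/graded structures on both sides.

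First, I would equip the source $S(\fh)\otimes\CC W\otimes S(\fh^*)$ with the filtration inherited from the grading on $H_\cc$ (so $\deg\fh^*=\deg W=0$, $\deg\fh=1$); explicitly, put $S(\fh)^{\leq k}\otimes\CC W\otimes S(\fh^*)$ in filtration degree $k$. The multiplication map $\mu\colon S(\fh)\otimes\CC W\otimes S(\fh^*)\to H_\cc$ is then a filtered map, and it suffices to show that the associated graded map $\gr\mu$ is an isomorphism. Under the canonical identification $S(\fh\oplus\fh^*)\cong S(\fh)\otimes S(\fh^*)$ as graded vector spaces and the PBW isomorphism $\gr H_\cc\cong S(\fh\oplus\fh^*)\#W$, the map $\gr\mu$ becomes the obvious vector space identification $S(\fh)\otimes\CC W\otimes S(\fh^*)\to S(\fh)\otimes S(\fh^*)\otimes\CC W$ composed with the tensor flip on the last two factors, which is manifestly bijective.

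Concretely I would verify this by picking a homogeneous basis: monomials $y_{i_1}\cdots y_{i_a}\otimes w\otimes x_{j_1}\cdots x_{j_b}$ with $y_i\in\fh$, $x_j\in\fh^*$, $w\in W$. Their images in $H_\cc$ have principal symbols $y_{i_1}\cdots y_{i_a}\cdot x_{j_1}\cdots x_{j_b}\cdot w$ in $S(\fh\oplus\fh^*)\#W$, which form a basis there by PBW. Hence $\mu$ sends a basis to a filtered basis of $H_\cc$, so it is an isomorphism.

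No serious obstacle arises beyond carefully interpreting the filtration on the three-fold tensor product. The only thing to be cautious about is that the commutation relation $[y,x]=t\langle y,x\rangle-\sum_s\cc(s)\langle\alpha_s^\vee,x\rangle\langle\alpha_s,y\rangle s$ produces a term in filtration degree $0$ (and in $\CC W$), not a violation of the filtration; this is exactly why the filtration on $H_\cc$ with $\deg\fh=1,\deg\fh^*=\deg W=0$ is well-defined and why the associated graded is commutative (tensored with $\CC W$) as asserted by Theorem \ref{pbw}. Thus Corollary \ref{triang} is essentially a formal consequence of PBW together with the tensor decomposition $S(\fh\oplus\fh^*)\cong S(\fh)\otimes S(\fh^*)$.
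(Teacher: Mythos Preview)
Your argument is correct and is exactly the standard way one deduces the triangular decomposition from PBW; the paper itself gives no proof at all, merely stating the result as a corollary of Theorem~\ref{pbw} and attributing the observation to \cite{ggor}. So you have supplied the details the paper omits.

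One small inaccuracy worth tidying up: the principal symbol of $y_{i_1}\cdots y_{i_a}\cdot w\cdot x_{j_1}\cdots x_{j_b}$ in $S(\fh\oplus\fh^*)\#W$ is not $y_{i_1}\cdots y_{i_a}\cdot x_{j_1}\cdots x_{j_b}\cdot w$ but rather $y_{i_1}\cdots y_{i_a}\cdot w(x_{j_1}\cdots x_{j_b})\cdot w$, since in the smash product $w$ acts on $\fh^*$ when moved past the $x$'s. Equivalently, $\gr\mu$ is not the pure tensor flip but the flip composed with the twist $x\otimes w\mapsto w(x)\otimes w$. This does not affect your conclusion: for each fixed $w\in W$ the map $x\mapsto w(x)$ is a linear automorphism of $S(\fh^*)$, so the resulting elements still form a basis of $S(\fh\oplus\fh^*)\#W$ and $\gr\mu$ is bijective.
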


 \subsection{Aspherical parameters}\label{para} 
   Let $e=\frac{1}{|W|}\sum_{w\in W}w$ be the averaging idempotent in $\CC W\subseteq H_\cc$. The subspace $eH_\cc e\subseteq H_\cc$ is closed under multiplication and $e$ is a unit. We call $eH_\cc e$ the {\it spherical subalgebra} of $H_\cc$. Note that $\gr(eH_\cc e)=e(S(\fh\oplus\fh^*)\#W)e\cong S(\fh\oplus\fh^*)^W$.
    
   For $t=0$, we have the following result (\cite{eg} Theorem 3.1).
   \begin{theorem}[Satake isomorphism]\label{sataka}
   	For any $\cc\in\fc$, denote the center of $H_{0,\cc}$ by $Z_{0,\cc}$. The map $Z_{0,\cc}\to eH_{0,\cc}e$, $z\mapsto z\cdot e$ is a Poisson algebra isomorphism.
   \end{theorem}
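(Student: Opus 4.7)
The plan is to reduce the algebra isomorphism to a comparison on associated graded algebras, and then transfer the Poisson structure via the flat family $H_{t,\cc}$. Well-definedness and multiplicativity are nearly automatic: for $z \in Z_{0,\cc}$ one has $ze = eze \in eH_{0,\cc}e$, and $(z_1z_2)e = (z_1e)(z_2e)$ since $e$ is idempotent and commutes with the $z_i$.

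For the bijection, filter $Z_{0,\cc}$ and $eH_{0,\cc}e$ by the filtration induced from $H_{0,\cc}$. By Theorem \ref{pbw}, $\gr H_{0,\cc} = S(\fh\oplus\fh^*)\#W$. Since $S(\fh\oplus\fh^*)$ is an integral domain with faithful $W$-action, computing in the basis $\{f\cdot w\}$ yields $Z(\gr H_{0,\cc}) = S(\fh\oplus\fh^*)^W$, so $\gr Z_{0,\cc} \subseteq S(\fh\oplus\fh^*)^W$. On the spherical side, averaging gives $e(\gr H_{0,\cc})e = S(\fh\oplus\fh^*)^W\cdot e$, canonically isomorphic to $S(\fh\oplus\fh^*)^W$ via $f\cdot e \leftrightarrow f$; under this isomorphism, the associated graded of the Satake map becomes the inclusion $\gr Z_{0,\cc} \hookrightarrow S(\fh\oplus\fh^*)^W$.

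The main obstacle is to upgrade this inclusion to an equality, that is, to show every $W$-invariant $f\in S(\fh\oplus\fh^*)^W$ lifts to a central element of $H_{0,\cc}$. I would proceed by induction on filtration degree. The base cases $f\in S(\fh^*)^W$ and $f\in S(\fh)^W$ are already central in $H_{0,\cc}$ by a direct Leibniz-rule calculation from the defining relation $[y,x]=-\sum_s\cc(s)\langle\alpha_s^\vee,x\rangle\langle\alpha_s,y\rangle s$, since the resulting expression for $[y, f]$ contains a factor $f - s(f)$ that vanishes under $W$-invariance. For a general mixed invariant, take any PBW lift $\tilde f$; since $f$ is central in $\gr H_{0,\cc}$, each commutator $[\tilde f, a]$ has strictly lower filtration degree than $\deg(\tilde f)+\deg(a)$, so one iteratively modifies $\tilde f$ by lower-degree terms to make it central. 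This produces $z_f\in Z_{0,\cc}$ with $\gr z_f = f$, and hence $\gr Z_{0,\cc} = S(\fh\oplus\fh^*)^W$. Consequently the associated graded Satake map is the identity, and the Satake map itself is a filtered isomorphism.

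For Poisson compatibility, PBW also gives a flat family $H_{t,\cc}$ over $\Spec\CC[t]$. The Poisson bracket on $Z_{0,\cc}$ is defined by $\{z_1,z_2\} = t^{-1}[\tilde z_1,\tilde z_2]|_{t=0}$ for lifts $\tilde z_i\in H_{t,\cc}$, well-defined since $z_1,z_2$ commute in $H_{0,\cc}$. The same recipe on $eH_{t,\cc}e$ defines the bracket on $eH_{0,\cc}e$, commutative by the algebra isomorphism already established. Taking $e\tilde z_i e$ as lifts of $z_i e$ and expanding $[e\tilde z_1 e, e\tilde z_2 e] = e[\tilde z_1,\tilde z_2]e - e\tilde z_1[\tilde z_2, e]e + e\tilde z_2[\tilde z_1, e]e$, one finds that $\{z_1 e, z_2 e\}$ equals $\{z_1, z_2\}\cdot e$ plus error terms controlled by $\{z_i, e\}$; these error terms vanish by a Leibniz-rule argument on $e^2=e$ (which, combined with $W$-invariance of $\{z_i, e\}$ and invertibility of $1-2e$, forces $\{z_i, e\} = 0$).
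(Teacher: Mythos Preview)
The paper does not supply a proof; the result is quoted from \cite{eg}, Theorem~3.1. Your argument, however, has a genuine gap in the surjectivity step.

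The sentence ``one iteratively modifies $\tilde f$ by lower-degree terms to make it central'' is not a proof. Knowing that $[\tilde f,a]$ has lower filtration degree tells you nothing about \emph{which} lower-degree correction to subtract; for the scheme to work you would need the leading-order derivation induced by $[\tilde f,\cdot]$ on $\gr H_{0,\cc}=S(\fh\oplus\fh^*)\#W$ to be inner, and you give no reason for this. More tellingly, your inductive step nowhere uses that $t=0$: the hypothesis ``$f$ is central in $\gr H_{t,\cc}=S(\fh\oplus\fh^*)\#W$'' holds for every $t$, so if the correction scheme were valid it would equally prove $\gr Z(H_{1,\cc})=S(\fh\oplus\fh^*)^W$, contradicting the fact (noted just after Theorem~\ref{sataka}) that $H_{1,\cc}$ has trivial center. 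Your base cases $S(\fh^*)^W,\,S(\fh)^W\subset Z_{0,\cc}$ do use $t=0$, but they cannot rescue the general step, since together they do not generate $S(\fh\oplus\fh^*)^W$: already for $W$ of order two acting on $\fh=\CC$ by sign, the mixed invariant $xy$ is missing from the subalgebra generated by $x^2$ and $y^2$. The equality $\gr Z_{0,\cc}=S(\fh\oplus\fh^*)^W$ is precisely the substance of the theorem and requires genuinely different input; see \cite{eg}.

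A smaller remark on the Poisson step: since conjugation by $W$ preserves each $H_{t,\cc}$ and fixes $z_i\in Z_{0,\cc}$, you may average any lift over $W$ to obtain a $W$-invariant $\tilde z_i$; then $[\tilde z_i,e]=0$ identically in $t$, so $\{z_i,e\}=0$ directly. This bypasses the Leibniz/$(1-2e)$ detour, in which the claimed $W$-invariance of $\{z_i,e\}$ was asserted but not actually verified.
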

   For $t=1$, however, $H_\cc$ (and also $eH_\cc e$) has trivial center.
    
   We have a functor $H_\cc\modd\to eH_\cc e\modd$ given by $M\mapsto eM(=M^W)$. This functor is an equivalence if and only if $H_\cc=H_\cc eH_\cc$. In this case, we say that the parameter $\cc$ is {\it spherical}, and that $\cc$ is {\it aspherical} otherwise.
    
   We introduce several different systems of parameters and the correspondences between them. Then we describe the aspherical locus in different parameterizations.
    
   For $s\in S$, let $\lambda_s$ denote the eigenvalue of $s$ in $\fh^*$ different from 1, and $H=\fh^s$ the reflection hyperplane. Denote by $W_H$ the point-wise stabilizer of $H$ and by $\ell_H$ the order of $W_H$. We can find $h_{H,j}\in\CC$ for $j=0,\cdots,\ell_H-1$, with $h_{H,j}=h_{H',j}$ for any $j$ if $H'\in WH$, such that
   $$ \cc(s)=\sum_{j=1}^{\ell_H-1}\frac{1-\lambda_s^j}{2}(h_{\fh^s,j}-h_{\fh^s,j-1}). $$
   Clearly these $h_{H,0},\cdots,h_{H,\ell_H-1}$ are defined up to a common summand.
    
   We can recover $h$-parameters from the $\cc$-parameters by the formula
   $$ h_{H,j}=\frac{1}{\ell_H}\sum_{s\in W_H\setminus\{1\}}\frac{2c(s)}{\lambda_s-1}\lambda_s^{-j}. $$
   Note that $\sum_{j=0}^{\ell_H-1}h_{H,j}=0$ in this case.
    
   For the cyclotomic group $W=G(\ell,1,n)$, the set $S$ of complex reflections can be described as follows. For $\eta\in\ZZ_\ell$, let $\eta_{(i)}$ denote the $\eta$ in the $i$-th copy of $\ZZ_\ell\subseteq G(\ell,1,n)$. All such elements are reflections. Other reflections are of the form $(ij)\eta_{(i)}\eta_{(j)}^{-1}$, where $(ij)\in\fS_n$, with $i<j$, denotes the transposition permuting $i$ and $j$, and $\eta\in \ZZ_\ell$. Therefore, we have two conjugacy classes of reflection hyperplanes, whose representatives are given by $x_1=0$ and $x_1=x_2$. We set $\kappa=-\cc(s)$ where $s$ is a reflection in $\fS_n$, and $h_j=h_{H,j}$ where $H$ is the hyperplane $x_1=0$. Note that here we have $\ell_H=\ell$. Therefore we have the $h$-parameters $(\kappa,h_0,\cdots,h_{\ell-1})$ obtained from the $\cc$-parameter $\cc:S/W\to \CC$. We assume that $\kappa\neq 0$.
    
   To make the combinatorial notation in Section \ref{crystal} below easier, we introduce another set of parameters $\ss=\{s_0,\cdots,s_{\ell-1}\}$ for $W=G(\ell,1,n)$, such that $h_j=\kappa s_j-j/\ell$. So we also have the $\ss$-parameter $(\kappa,\ss)$.
   
   Finally, Dunkl and Griffeth introduced one more set of parameters (\cite{dg}) $(c_0,d_j:j\in\ZZ)$ where $c_0=-\kappa$,  $d_j=d_{j'}$ if $j=j'$ (mod $\ell$) and when $0\leq j\leq \ell-1$, $d_j=j-\kappa\ell s_j=-\ell h_j$. Furthermore, in terms of these parameter, Dunkl and Griffeth give the explicit description of aspherical parameters (\cite{dg} Theorem 1.1).
   \begin{theorem}\label{dg}
   	The parameter $(c_0,d_j)$ is aspherical if and only if
   	\begin{itemize}
   		\item [(a)]$c_0=-\frac{k}{m}$ for integers $k$ and $m$ satisfying $1\leq k<m\leq n$ or
   		\item [(b)]there is an integer $j$ with $1\leq j\leq\ell-1$, an integer $m$ with $-(n-1)\leq m\leq (n-1)$ and an integer $k$ such that $k\neq 0$ (mod $\ell$), and
   		$$ k=d_j-d_{j-k}+\ell mc_0 \text{, and } 1\leq k\leq j+\lfloor \sqrt{n+\frac{1}{4}m^2}-\frac{1}{2}m-1 \rfloor\ell. $$ 
   	\end{itemize}
   \end{theorem}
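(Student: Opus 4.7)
The plan is to follow the approach of Dunkl--Griffeth, which hinges on the theory of nonsymmetric Jack polynomials for $G(\ell,1,n)$. I would reformulate asphericity representation-theoretically: by Morita theory, $\cc$ is aspherical iff $H_\cc e H_\cc \neq H_\cc$, iff there exists a simple $L(\tau)\in \cO_\cc$ with $eL(\tau) = L(\tau)^W = 0$. Since $\Delta(\tau) \cong \tau\otimes \CC[\fh]$ as a $W$-module, Chevalley--Shephard--Todd gives $e\Delta(\tau)\neq 0$ for every $\tau$, so asphericity forces the maximal proper submodule $M(\tau)\subset \Delta(\tau)$ to contain every $W$-invariant vector of $\Delta(\tau)$ for some $\tau$; the task is to identify parameters producing such a drastic degeneration.

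Next, I would analyze the contravariant form using Griffeth's basis of nonsymmetric Jack polynomials in $\Delta(\tau)$. Viewing $\tau$ as an $\ell$-multipartition of $n$, there is a basis $\{f_T\}$ indexed by standard tableaux on which the Cherednik operators act diagonally; the contravariant form is diagonal in this basis with norm $\langle f_T,f_T\rangle$ equal to an explicit product of linear forms in $(c_0, d_0, \ldots, d_{\ell-1})$ determined by the combinatorics of $T$. Any parameter at which some $\Delta(\tau)$ acquires a proper submodule lies on the zero locus of one of these linear factors, so the aspherical locus is contained in the union of such hyperplanes.

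Then I would enumerate the linear factors combinatorially. Factors arising from pairs of boxes lying in the same cyclic component reduce to the classical $\fS_n$-type singularity $mc_0 + k = 0$, giving family (a) with range $1\leq k<m\leq n$ forced by the requirement that the two boxes fit inside a single partition of size at most $n$. Factors arising from pairs in distinct cyclic components with labels $j$ and $j-k$ produce the form $d_j - d_{j-k} + \ell m c_0 - k = 0$, giving family (b); the bound $1\leq k\leq j+\lfloor \sqrt{n+\tfrac14 m^2}-\tfrac12 m-1\rfloor\ell$ encodes the largest step $k$ for which the corresponding two-box configuration is realizable inside an $\ell$-multipartition of total size $n$ (the quadratic inequality $k^2 + k\ell m \leq n\ell^2$ simplified by the floor). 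For each hyperplane I would then exhibit a specific $\tau$ together with an explicit singular vector whose generated submodule absorbs all $W$-invariants, establishing the direction ``hyperplane implies aspherical''.

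The main obstacle is the combinatorial analysis, particularly the square-root bound in (b) and the converse direction: showing that parameters outside these hyperplanes are spherical. For the converse, one must argue that any Jack-norm vanishing outside the families (a), (b) --- if it occurs --- does not produce the stronger condition $L(\tau)^W = 0$; this rests on the explicit form of the singular vectors produced by Griffeth's intertwiner operators and on tracking which submodules of $\Delta(\tau)$ actually absorb the full $W$-invariant subspace, a delicate bookkeeping across all $\ell$-multipartitions of $n$.
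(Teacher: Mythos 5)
The paper does not prove this statement: Theorem \ref{dg} is imported verbatim from Dunkl--Griffeth (\cite{dg}, Theorem 1.1), so there is no internal proof to compare against. Your outline does reconstruct the strategy of the actual source --- reduce asphericity to the existence of $\tau$ with $eL_\cc(\tau)=0$, then detect this via the norms of nonsymmetric Jack polynomials in Griffeth's standard-tableau basis of $\Delta_\cc(\tau)$ --- so as a description of the right road map it is accurate.

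As a proof, however, it has concrete gaps beyond ``delicate bookkeeping.'' First, the reduction ``aspherical iff some simple in $\cO_\cc$ is killed by $e$'' is not pure Morita formalism: a priori a simple $H_\cc$-module with $eL=0$ need not lie in category $\cO$, and one must invoke the support/associated-variety argument (as in \cite{be} or \cite{e}) to restrict attention to $\cO_\cc$. Second, and more seriously, the zero locus of the Jack-polynomial norms is the locus where the contravariant form on some $\Delta_\cc(\tau)$ degenerates, i.e.\ where $\cO_\cc$ fails to be semisimple; this is a much larger union of hyperplanes than the aspherical locus. The entire content of the theorem, in both directions, is to decide for which of these degenerations the radical swallows \emph{all} of $\Delta_\cc(\tau)^W$, and your proposal defers exactly this: you neither exhibit, for each hyperplane in (a) and (b), a specific $\tau$ and singular vector forcing $L_\cc(\tau)^W=0$, nor show that the remaining norm-vanishing hyperplanes leave some invariant vector outside the radical for every $\tau$. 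The square-root bound in (b) is likewise asserted to ``encode realizability'' without the supporting combinatorics. Since the statement is used here as a black box from \cite{dg}, the honest conclusion is that your text is a correct plan pointing at the right reference, not a proof.
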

   In the $\ss$-parameters, the above conditions can be rephrased as the following (\cite[4.1.2]{gl}). The parameter $(\kappa,s_0,\cdots,s_{\ell-1})$ is aspherical if and only if
   \begin{itemize}
       \item [(a')]$\kappa=\frac{k}{m}$ for integers $k$ and $m$ satisfying $1\leq k<m\leq n$ or
       \item [(b')]there is an integer $j$ with $1\leq j\leq\ell-1$, an integer $m$ with $-(n-1)\leq m\leq (n-1)$ and an integer $k$ such that $k\neq 0$ (mod $\ell$), and
      	$$ k-\hat{k}=\kappa\ell(s_{j-k}-s_j-m) \text{, and } 1\leq k\leq j+\lfloor \sqrt{n+\frac{1}{4}m^2}-\frac{1}{2}m-1 \rfloor\ell, $$
      	where $j+1-\ell\leq\hat{k}\leq j$ and $\hat{k}=k$ (mod $\ell$).
   \end{itemize}
   
  We remark that when $\ell=1$, we have $W=\fS_n$. In this case, $H_\cc$ is called the RCA of type A, and the aspherical parameters $\cc$ are exactly the rational numbers in the interval $(-1,0)$ with denominators not exceeding $n$. Both the structure theory of $H_\cc$ and the representation theory of $eH_\cc e$ have been well-studied (see, for instance, \cite{eg}, \cite{ggor}, \cite{beg}, \cite{gs}). In this article, we assume that $\ell>1$. 
  
  We also assume that the aspherical parameter $\cc$ is {\it Weil generic} (meaning that $\cc$ lies outside countably many algebraic subvarieties) in one of the hyperplanes listed in Theorem \ref{dg} (b) (or (b')).
     
  \subsection{Quantized Nakajima quiver varieties}\label{qqv}
   A spherical cyclotomic RCA can be also realized as a quantized Nakajima quiver variety. Under the realization, aspherical parameters go to singular parameters. 
  
   Let $Q=(Q_0,Q_1,t,h)$ be a quiver (which is an oriented graph and we allow loops and multiple edges), where $Q_0$ is a finite set of vertices, $Q_1$ a finite set of arrows and $t,h:Q_1\to Q_0$ are maps assigning to each arrow its tail and head. Pick vectors $v,w\in\ZZ^{Q_0}_{\geq 0}$ and vector spaces $V_i,W_i$ for $i\in Q_0$ such that $\dim V_i=v_i$ and $\dim W_i=w_i$. We can construct the (co)framed representation space 
   $$ R=R(Q,v,w):=\bigoplus_{a\in Q_1}\Hom(V_{t(a)},V_{h(a)})\oplus\bigoplus_{i\in Q_0}\Hom(V_i,W_i). $$
   We will consider the cotangent bundle $T^*R=R\oplus R^*$, which can be identified with
   $$ \bigoplus_{a\in Q_1}(\Hom(V_{t(a)},V_{h(a)})\oplus\Hom(V_{h(a)},V_{t(a)}))\oplus\bigoplus_{i\in Q_0}(\Hom(V_i,W_i)\oplus\Hom(W_i,V_i)). $$
   The space $T^*R$ carries a natural symplectic structure, denoted by $\omega$. On $R$ we have a natural action of the group $G:=\Pi_{i\in Q_0}\GL(v_i)$. This action extends to an action on $T^*R$ by linear symplectomorphisms, admitting a {\it moment map}, that is, a $G$-equivariant morphism $\mu:T^*R\to\fg^*$, with $\fg$ the Lie algebra of $G$, such that $\{\mu^*(x),\cdot\}=x_{T^*R}$ for any $x\in\fg$, where $\mu^*:\fg\to \CC[T^*R]$ is the dual map to $\mu$, $\{\cdot,\cdot\}$ is the Poisson bracket on $\CC[T^*R]$ induced by $\omega$ and $x_{T^*R}$ is a vector field on $T^*R$ induced by the $G$-action. Also there is an action of a one-dimensional torus $\CC^\times$ on $T^*R$ given by $t.r=t^{-1}r$. We specify the moment map uniquely by requiring that it is quadratic: $\mu(t.r)=t^{-2}\mu(r)$. In this case $\mu^*(x)=x_{R}$, where we view $x_R\in \Vect_R$ as a function on $T^*R$.
    
   Now we proceed to the definition of Nakajima quiver varieties. Pick a character $\theta$ of $G$ and also an element $\lambda\in(\fg/[\fg,\fg])^*$. To $\theta$ we associate an open set $(T^*R)^{\theta-ss}$ of $\theta$-semistable points in $T^*R$ (which may be empty). Recall that a point $r\in T^*R$ is called $\theta$-{\it semistable} if there is a $(G,n\theta)$-semi-invariant polynomial $f\in \CC[T^*R]$ (i.e., $f(g^{-1}r)=\theta(g)^nf(r)$ for any $g\in G$ and any $r\in T^*R$) with $n>0$ such that $f(r)\neq 0$. A Nakajima quiver variety is defined as the GIT quotient $$\cM_\lambda^\theta(v,w)=(\mu^{-1}(\lambda)\cap(T^*R)^{\theta-ss})//G.$$
   This space is smooth provided $(\lambda,\theta)$ is {\it generic}, i.e. when the $G$-action on $\mu^{-1}(\lambda)^{\theta-ss}:=\mu^{-1}(\lambda)\cap(T^*R)^{\theta-ss}$ is free. A description of generic values of $(\lambda,\theta)$ is due to Nakajima (\cite{n1}). Namely, $(\lambda,\theta)$ is generic if there is no $v'\in\ZZ_{\geq 0}^{Q_0}$ such that 
   \begin{itemize}
   	\item $v'\leq v$ (component-wisely),
   	\item $\sum_{i\in Q_0} v_i'\alpha_i$ is a root for the Kac-Moody algebra $\fg(Q)$ associated to the underlying graph of $Q$, where $\alpha_i$, $i\in Q_0$, are the simple roots for $\fg(Q)$ corresponding to $i$.
   	\item and $v'\cdot \theta=v'\cdot\lambda=0$.
   \end{itemize}
   All varieties $\cM_\lambda^\theta(v,w)$ carry natural Poisson structures as Hamiltonian reductions. For a generic pair $(\lambda,\theta)$, $\cM_\lambda^\theta(v,w)$ is symplectic.
   
   The variety $\cM_\lambda^0(v,w)$ is affine and there is a projective morphism $\rho:\cM_\lambda^\theta(v,w)\to \cM_\lambda^0(v,w)$. Denote $\cM(v,w):=\Spec(\CC[\cM_0^\theta(v,w)])$ for a generic $\theta$ (i.e., $(0,\theta)$ is generic). Note that the variety $\cM(v,w)$ is independent of the choice of $\theta$. When the moment map $\mu$ is flat, $\cM_0^0(v,w)=\cM(v,w)$ and $\rho:\cM_0^\theta(v,w)\to \cM(v,w)$ is a conical symplectic resolution, with a $\CC^\times$-action coming from the dilation action on $T^*R$ (\cite[Section 2]{bpw}). According to \cite[Theorem 1.1]{cb}, $\mu$ is flat if and only if
   $$ p(v)+w\cdot v-(w\cdot v^0+\sum_{i=0}^k p(v^i))\geq 0 $$
   for any decomposition $v=v^0+v^1+\cdots+v^k$, or equivalently, for the decompositions where $v^1,\cdots,v^k$ are roots. Here $p(v):=1-\frac{1}{2}(v,v)$ and $(\cdot,\cdot)$ denotes the {\it symmetrized Tits form} defined by 
   $$(v^1,v^2):=2\sum_{k\in Q_0}v^1_kv^2_k-\sum_{a\in Q_1}(v_{t(a)}^1v^2_{h(a)}+v^1_{h(a)}v^2_{t(a)}).$$
   
   Next, we construct quantizations of $\cM_0^\theta(v,w)$ that will be certain sheaves of filtered algebras on $\cM_0^\theta(v,w)$. Namely, consider the algebra $D(R)$ of differential operators on $R$. We can localize this algebra to a microlocal (the sections are only defined on $\CC^\times$-stable open subsets) sheaf on $T^*R$ denoted by $D_R$. We have a quantum comoment map $\Phi:\fg\to D(R)$ quantizing the classical comoment map $\fg\to \CC[T^*R]$, still $\Phi(x)=x_R$. Now fix $\lambda\in \fP:=(\fg^*)^G\cong\CC^{Q_0}$. We get the quantum Hamiltonian sheaf on $\cM_0^\theta(v,w)$, 
   $$ \cA_\lambda^\theta(v,w):=\pi_*[D_R/D_R\{\Phi(x)-\lambda(x)|x\in\fg\}|_{(T^*R)^{\theta-ss}}]^G, $$
   where $\pi:\mu^{-1}(0)^{\theta-ss}\to \cM_0^\theta(v,w)$ is the quotient map. This is a sheaf of filtered algebra with $\gr\cA_\lambda^\theta(v,w)=\cO_{\cM_0^\theta(v,w)}$ (for generic $\theta$), and hence it has no higher cohomology. The global section $\cA_\lambda(v,w):=\Gamma(\cA_\lambda^\theta(v,w))$ is an algebra with $\gr\cA_\lambda(v,w)=\CC[\cM_0^\theta(v,w)]$. One can prove that $\cA_\lambda(v,w)$ is independent of the choice of $\theta$ (\cite[Corollary 3.8]{bpw}). When $\mu$ is flat, $\cA_\lambda(v,w)$ coincides with $\cA_\lambda^0(v,w)$ (\cite[2.2.2]{bl}).
   
 \subsection{Generalized Calogero-Moser spaces}\label{cm}
   Recall that for the RCA $H_{0,\cc}(W,\fh)$, we have the Satake isomorphism (Theorem \ref{sataka}) $Z_{0,\cc}\cong eH_{0,\cc}e$. The space $C_{\cc}:=\Spec(eH_{0,\cc}e)$ is called the {\it generalized Calogero-Moser space}, of which the type A case was introduced by Kazhdan-Kostant-Sternberg (\cite{kks}) and Wilson (\cite{w}) in connection with the Calogero-Moser integrable systems. \par 
   There is also a realization of the generalized Calogero-Moser spaces $C_\cc(W)$ with $\cc=(c_0,d_0,\cdots,d_{\ell-1})$ for $W=G(\ell,1,n)$ as Nakajima quiver varieties. Take the quiver $Q$ of type $\tilde{A}_{\ell-1}$ and the dimension vectors $v=n\delta$, where $\delta=(1,\cdots,1)$ corresponds to the indecomposable imaginary root of $Q$, and $w=\epsilon_0$, the simple root at the extending vertex labeled by $0$. Given $\lambda\in \CC^{Q_0}$ recovered from $\cc$ (to be specified below), one can get the Nakajima quiver variety
   $$ \cM_\lambda^0(v,w)=\mu^{-1}(\lambda)//G, $$
   the quantized quiver variety $\cA^\theta_\lambda(v,w)$, and its global section $\cA_\lambda(v,w)$.
  
   The result in \cite[3.10]{g} can be rephrased as follows.
   \begin{theorem} \label{classcalcm}
   	Let $\cc$, $v$, $w$ be as above, and $$\lambda^c=\frac{1}{\ell}(\ell c_0-d_0+d_{\ell-1}, d_0-d_1,d_1-d_2,\cdots,d_{\ell-2}-d_{\ell-1}),$$ where the superscript $c$ stands for classical. Then there is an isomorphism between the Poisson varieties
   	$$ C_\cc=\Spec(eH_{0,\cc} e)\cong \cM_{\lambda^c}^0(v,w). $$
   \end{theorem}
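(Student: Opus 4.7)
The plan is to identify both $C_\cc$ and $\cM_{\lambda^c}^0(v,w)$ as Poisson deformations of the common singular affine Poisson variety $\Sigma := (\fh \oplus \fh^*)/W$, and then to match parameters using the universality of Poisson deformations of conical symplectic singularities.

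First I would verify that both sides specialize to $\Sigma$ at the zero parameter. On the Cherednik side, Corollary \ref{triang} together with Theorem \ref{pbw} gives $\gr(eH_{0,\cc}e) \cong e(S(\fh \oplus \fh^*)\#W)e \cong S(\fh \oplus \fh^*)^W = \CC[\Sigma]$, so the family $\{C_\cc\}_{\cc \in \fc}$ is a flat Poisson deformation of $\Sigma$ with bracket induced by the commutator on $H_{0,\cc}$. On the quiver side, for $Q = \tilde{A}_{\ell-1}$, $v = n\delta$ and $w = \epsilon_0$, the Crawley-Boevey inequality recalled in Section \ref{qqv} is readily checked, so $\mu$ is flat and $\cM_\lambda^0(v,w)$ varies in a flat Poisson family over $\fP = \CC^{Q_0}$. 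A standard computation, combining the McKay identification of $\CC^2/\ZZ_\ell$ with the $\delta$-dimensional quiver variety and Nakajima's realization of Hilbert schemes on Kleinian singularities, yields $\cM_0^0(v,w) \cong (\CC^2/\ZZ_\ell)^{(n)} \cong \CC^{2n}/G(\ell,1,n) = \Sigma$.

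Next I would invoke Namikawa's theorem: since $\Sigma$ admits the projective symplectic resolution $\cM_0^\theta(v,w) \to \Sigma$ for generic $\theta$, the space of graded Poisson deformations of $\Sigma$ is canonically a vector space isomorphic to $H^2(\cM_0^\theta(v,w), \CC) \cong \CC^{Q_0}$, of dimension $\ell$. Both families therefore produce linear classifying maps $\fc \to H^2$ and $\fP \to H^2$, and since $\dim \fc = |S/W| = \ell = \dim \fP$, both maps are linear maps between spaces of the same dimension. One checks they are isomorphisms by testing non-degeneracy on the codimension-two symplectic leaves of $\Sigma$ (indexed by the two $W$-conjugacy classes of reflections), on which the first-order deformation is computable directly from the normal slice transverse to each reflection subgroup. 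Composing the two classifying maps yields a linear bijection $\fc \to \fP$, along which $C_\cc \cong \cM_{\lambda^c}^0(v,w)$.

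The main obstacle is pinning down the explicit formula for $\lambda^c$. This requires matching the two reflection conjugacy classes in $G(\ell,1,n)$ to the simple root directions $\epsilon_0, \ldots, \epsilon_{\ell-1} \in \CC^{Q_0}$, carefully accounting for the normalization $\mu^*(x) = x_R$ and the quadratic convention for $\mu$ used in Section \ref{qqv}, and for the shift by $d_0 - d_{\ell-1}$ attached to the extending vertex $0$. I would pin this down by reducing to two extreme cases: $n = 1$, where $\Sigma$ is the Kleinian singularity $\CC^2/\ZZ_\ell$ and its Poisson deformations are classically identified with the Cartan subalgebra of $\sln_\ell$, and $n$ arbitrary with $\ell = 1$, the type A Calogero-Moser case where $\lambda^c$ reduces to $c_0$ and matches the classical construction of \cite{kks, w}. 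Linearity of $\cc \mapsto \lambda^c$ then extends the formula to general $(n, \ell)$, giving precisely the expression in the statement.
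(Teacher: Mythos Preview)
The paper does not actually prove Theorem~\ref{classcalcm}; it is stated there as a rephrasing of \cite[3.10]{g} and no argument is given beyond the citation. So there is no ``paper's own proof'' to compare your proposal against.

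That said, your outline is a legitimate and by now standard route to results of this type, and it is genuinely different from the approach in the cited reference. Gordon's argument is constructive: it builds the isomorphism through the deformed preprojective algebra and the Crawley--Boevey--Holland machinery (reflection functors, Morita equivalences), tracking parameters explicitly along the way. Your approach instead appeals to Namikawa's universality of graded Poisson deformations of a conical symplectic singularity, which immediately gives an abstract Poisson isomorphism over a reparametrization of the base; the price is that the explicit formula for $\lambda^c$ must then be extracted separately. Each approach has its virtues: the constructive one gives the formula directly and predates Namikawa's theorem, while yours is conceptually cleaner and makes the Poisson compatibility automatic.

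One point in your parameter-matching step deserves care. Your plan is to fix the linear map $\fc\to\fP$ by specializing to $n=1$ and to $\ell=1$. For $n=1$ the group $G(\ell,1,1)=\ZZ_\ell$ has only $\ell-1$ reflection classes, so $\dim\fc=\ell-1$ while $\dim\fP=\ell$; the classifying map $\fP\to H^2$ for the Kleinian singularity has the $\delta$-line in its kernel. Thus the $n=1$ case determines $\lambda^c_1,\ldots,\lambda^c_{\ell-1}$ but leaves a one-parameter ambiguity in the $\delta$-direction, i.e.\ in the value of $\lambda^c\cdot\delta$. The $\ell=1$ case lives on a disjoint specialization and does not by itself resolve this for general $\ell$. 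You can close the gap cleanly by computing $\lambda^c\cdot\delta$ directly (for instance by comparing the values at which the generic fibre becomes singular, or by using the known $c_0$-dependence of the type~A leaf inside $C_\cc$), but as written the ``linearity then extends the formula'' sentence skips over this.
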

   On the quantized level, we have the following result for the spherical subalgebra, rephrased from \cite[1.4, 3.6]{g2} and \cite[1.4]{eggo}. 
   \begin{theorem}\label{quancm}
   	Let $\cc$, $v$, $w$ be as above, and
   	$$ \lambda^q=\frac{1}{\ell}(1-\ell(c_0+1)+d_0-d_{\ell-1}, 1-d_0+d_1,1-d_1+d_2,\cdots, 1-d_{\ell-2}+d_{\ell-1}), $$
   	where the superscript $q$ stands for quantum.
   	Then there is an algebra isomorphism 
   	$$ \cA_{\lambda^q}(v,w)\cong eH_\cc e. $$ 	
   \end{theorem}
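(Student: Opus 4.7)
The isomorphism $\cA_{\lambda^q}(v,w) \cong eH_\cc e$ itself is established in the cited works of Gordon \cite{g2} and Etingof--Ginzburg--Gordon--Oblomkov \cite{eggo}, so the real content of the proof is the translation of the $\lambda$-parameter formula from those sources into the Dunkl--Griffeth parameter system $(c_0, d_0, \ldots, d_{\ell-1})$ used throughout the paper. My plan is therefore to start by recording the dictionary of Section \ref{para} between the $\cc$-, $h$-, $\ss$- and $(c_0, d_j)$-parameters, and then to match this dictionary against the formulas in the references.

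Concretely, I would recall from \cite{eggo} (building on \cite{g2}) the realization of $eH_{\kappa, h_0, \ldots, h_{\ell-1}} e$ as the quantum Hamiltonian reduction of $D(R)$ at a character $\tilde\lambda \in \fP = \CC^{Q_0}$ whose components are affine-linear in $\kappa$ and the $h_j$. The key ingredients in $\tilde\lambda$ are: the form of the quantum comoment map $\Phi(x) = x_R$ (which differs from the symmetrized convention by half the trace of the $\fg$-action on $R$, producing a quantum shift of $\tfrac{1}{\ell}$ at each vertex), and the extra contribution at the framing vertex $0$ that introduces the summand $-\kappa - 1$ coming from the single-arrow framing $w = \epsilon_0$. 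Substituting $h_j = -d_j/\ell$ and $\kappa = -c_0$ into the resulting formula yields, at the non-framed vertices, $\tilde\lambda_i = \tfrac{1}{\ell}(1 - d_{i-1} + d_i)$, and at the framed vertex $\tilde\lambda_0 = \tfrac{1}{\ell}(1 - \ell(c_0+1) + d_0 - d_{\ell-1})$, matching $\lambda^q$ exactly. As a consistency check, one verifies against Theorem \ref{classcalcm} that $\lambda^q = -\lambda^c + (\tfrac{1-\ell}{\ell}, \tfrac{1}{\ell}, \ldots, \tfrac{1}{\ell})$, where the vector on the right is the expected quantum shift and the overall sign flip reflects the standard difference in moment map sign conventions between Gordon's classical and quantum papers.

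The main obstacle is purely bookkeeping of conventions across the references: \cite{g2} and \cite{eggo} employ their own labels for the parameters (often $H_j$ and $k$ in place of $h_j$ and $\kappa$), their own orientation of the cyclic $\tilde A_{\ell-1}$ quiver, and their own choice of which vertex is framed. Aligning these with the conventions of Sections \ref{rca} and \ref{qqv} of this paper---specifically, the normalization $\langle \alpha_s, \alpha_s^\vee \rangle = 2$, the framing $w = \epsilon_0$ at the extending vertex of $\tilde A_{\ell-1}$, and the sign of the moment map fixed by $\mu^*(x) = x_R$---is the only genuine work required. Once these identifications are in place, the desired formula for $\lambda^q$ follows by direct substitution, and the theorem is proved.
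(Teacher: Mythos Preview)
Your proposal is correct and aligns with the paper's own treatment: the paper does not give a proof of this theorem at all but simply records it as ``rephrased from \cite[1.4, 3.6]{g2} and \cite[1.4]{eggo}'', so the entire content is indeed the parameter translation you describe. Your sketch of the bookkeeping (the quantum shift at each vertex, the extra contribution at the framed vertex, and the consistency check against the classical formula in Theorem~\ref{classcalcm}) goes somewhat beyond what the paper writes out, but is exactly the verification implicit in the word ``rephrased''.
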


\section{Categorical actions on Cherednik category $\cO$} \label{categorification}
In this section, we continue working with the RCA setting. The triangular decomposition (Corollary \ref{triang}) allows us to define the category $\cO_\cc$ for $H_\cc$. The category $\cO_\cc$ enjoys many nice properties. In particular, several functors give rise to a categorical Kac-Moody action on $\cO_\cc$. This action is essential in Section \ref{support}.

 \subsection{Category $\cO_\cc(W)$}\label{O}
 The category $\cO_\cc(W)$ is defined as the subcategory of $H_\cc(W)\modd$ consisting of all modules with locally nilpotent action of $\fh\subseteq H_c$ that are finitely generated over $H_\cc$, or equivalently (under the condition that $\fh$ acts locally nilpotently), over $S(\fh^*)\subseteq H_\cc$.
  
 One can define Verma modules over $H_\cc$, 
 which are parameterized by the irreducible representations of $W$: given $\tau\in\Irr(W)$, set $\Delta_\cc(\tau):=H_\cc\otimes_{S(\fh)\#W}\tau$, where $\fh$ acts on $\tau$ by 0. Thanks to the triangular decomposition, the Verma module $\Delta_\cc(\tau)$ is naturally isomorphic to $S(\fh^*)\otimes\tau$ as an $S(\fh^*)\# W$ module (where $W$ acts diagonally and $S(\fh^*)$ acts by multiplications on the left). We identify $K_0(\cO_\cc(W))$ to $K_0(W\modd)$ by sending the class $[\Delta_\cc(\tau)]$ to $[\tau]$. Furthermore, there is a partial order on the set of all simples in $\cO_\cc(W)$, to be specified in Section \ref{hw}.
  
 Let us now describe the irreducible representations of $W=G(\ell,1,n)$, and hence the simples in $\cO_\cc(W)$. The set $\Irr(W)$ is in a natural bijection with the set $\cP_\ell(n)$ of $\ell$-partitions $\nu=(\nu^{(0)},\cdots, \nu^{(\ell-1)})$ of $n$, satisfying $\sum_{i=0}^{\ell-1}|\nu^{(i)}|=n$. The irreducible module $V_\nu$ corresponding to $\nu\in \cP_\ell(n)$ can be constructed as follows. The product $G(\ell,1,\nu):=\Pi_{i=0}^{\ell-1}G(\ell,1,|\nu^{(i)}|)$ naturally embeds into $G(\ell,1,n)$. Let $V_{\nu^{(i)}}$ denote the irreducible $\fS_{|\nu^{(i)}|}$-module labeled by the partition $\nu^{(i)}$. We equip $V_{\nu^{(i)}}$ with the structure of a $G(\ell,1,|\nu^{(i)}|)$-module by making all $\eta_{(j)}$ act by $\eta^i$. Denote the resulting $G(\ell,1,|\nu^{(i)}|)$-module by $V_{\nu^{(i)}}^{(i)}$. Let $V_\nu$ denote the $G(\ell,1,n)$-module induced from the $G(\ell,1,\nu)$-module $V_{\nu^{(0)}}^{(0)}\boxtimes V_{\nu^{(1)}}^{(1)}\boxtimes\cdots\boxtimes V_{\nu^{(\ell-1)}}^{(\ell-1)}$. The modules $V_\nu$ form a complete collection of the irreducible $W$-modules.
  
 There is a so-called {\it Euler element} $\hh\in H_\cc$ satisfying $[\hh,x]=x$, $[\hh,y]=-y$, and $[\hh,w]=0$ for $x\in\fh^*$, $y\in\fh$, $w\in W$. It is constructed as follows. Take a basis $y_1,\cdots,y_n\in\fh$ and the dual basis $x_1,\cdots,x_n\in \fh^*$. For $s\in S$, let $\lambda_s$ be the eigenvalue of $s$ in $\fh^*$ different from 1, as in \ref{para}. Then 
 $$ \hh=\sum_{i=1}^nx_iy_i+\frac{n}{2}-\sum_{s\in S}\frac{2\cc(s)}{1-\lambda_s}s. $$ 
 Using the Euler element one can establish the following standard structural results about the categories $\cO_\cc(W)$. 
 \begin{prop}\label{simple}
 	Every Verma module $\Delta_\cc(\tau)$ has a unique simple quotient, to be denoted by $L_\cc(\tau)$. The map $\tau\mapsto L_\cc(\tau)$ is a bijection between $\Irr(W)$ and $\Irr(\cO_\cc(W))$.
 \end{prop}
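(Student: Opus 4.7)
\textbf{Proof proposal for Proposition \ref{simple}.}

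The plan is to use the Euler element $\hh$ as the main tool. First I would check that $\hh$ acts on each generating copy of $\tau \subset \Delta_\cc(\tau)$ by a single scalar $h_\tau$: since $\hh$ commutes with $W$ and $\tau$ is $W$-irreducible, Schur's lemma forces $\hh|_\tau$ to be scalar, and this scalar can be read off from the formula for $\hh$ (the $\sum x_iy_i$ piece annihilates $\tau$ because $\fh$ does). Combined with the commutation relations $[\hh,x]=x$, the isomorphism $\Delta_\cc(\tau)\cong S(\fh^*)\otimes\tau$ of Corollary \ref{triang} then shows that $\hh$ acts semisimply with eigenvalue $h_\tau + d$ on $S^d(\fh^*)\otimes\tau$. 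In particular $\tau$ is exactly the minimal $\hh$-eigenspace of $\Delta_\cc(\tau)$.

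Next I would prove existence and uniqueness of the simple quotient. Any submodule $N\subseteq\Delta_\cc(\tau)$ is $W$- and $\hh$-stable, so it splits as a sum of its $\hh$-eigenspaces. If $N$ meets the eigenspace $\tau$, then because $\tau$ is $W$-irreducible $N$ contains all of $\tau$, which generates $\Delta_\cc(\tau)$ over $S(\fh^*)$; hence $N=\Delta_\cc(\tau)$. Consequently every proper submodule is contained in $\bigoplus_{d\geq 1}S^d(\fh^*)\otimes\tau$, and the sum of all proper submodules is still proper. This gives a unique maximal proper submodule, and the quotient is the desired unique simple quotient $L_\cc(\tau)$. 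Injectivity of $\tau\mapsto L_\cc(\tau)$ is immediate: in $L_\cc(\tau)$ the minimal $\hh$-eigenspace is still $\tau$ (its image did not collapse, by the submodule analysis above), so $\tau$ is recovered from $L_\cc(\tau)$ as this eigenspace viewed as a $W$-module.

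For surjectivity I would take an arbitrary simple $L\in\cO_\cc(W)$ and produce a surjection $\Delta_\cc(\tau)\twoheadrightarrow L$ for some $\tau\in\Irr(W)$. The key auxiliary claim is that $\hh$ acts locally finitely on every object of $\cO_\cc(W)$ with eigenvalues bounded below on each generalized eigenspace coset. Indeed, $L$ is finitely generated over $S(\fh^*)$ by, say, $W$-stable generators on which $\fh$ is nilpotent; the Euler relations $[\hh,x_i]=x_i$ and $[\hh,y_i]=-y_i$ then imply that the $\CC W$-span of these generators is $\hh$-stable and finite-dimensional, and that the $\hh$-spectrum of $L$ lies in finitely many cosets $c+\ZZ_{\geq 0}$. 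Pick a generalized $\hh$-eigenspace of $L$ with eigenvalue of smallest real part; any vector in it is killed by $\fh$ (else applying some $y_i$ would lower the eigenvalue), so $L^\fh\neq 0$, and $L^\fh$ is $W$-stable. Choosing an irreducible $W$-submodule $\tau\subset L^\fh$ and using the universal property of the Verma module $\Delta_\cc(\tau)=H_\cc\otimes_{S(\fh)\# W}\tau$ produces a nonzero $H_\cc$-map $\Delta_\cc(\tau)\to L$, which must be surjective by simplicity of $L$; hence $L\cong L_\cc(\tau)$.

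The main obstacle I anticipate is the auxiliary claim about $\hh$ acting locally finitely with bounded-below spectrum on every object of $\cO_\cc$; establishing this cleanly from the definition requires combining finite generation over $S(\fh^*)$, the triangular decomposition, and the Euler relations to manufacture a finite-dimensional $\hh$-stable subspace generating $L$. Once that is in hand, the remainder of the argument is a formal consequence of the Euler element bookkeeping.
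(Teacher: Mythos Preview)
Your proposal is correct and is precisely the standard argument the paper is alluding to: the paper does not actually write out a proof of this proposition, it only says ``Using the Euler element one can establish the following standard structural results'' and then states the result. Your Euler-grading argument is the intended one.

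One small simplification for the surjectivity step, which removes the obstacle you flag at the end: you do not need the full local-finiteness claim for $\hh$. By the very definition of $\cO_\cc(W)$ the subalgebra $S(\fh)$ acts locally nilpotently on $L$, and since the $y_i$ commute, repeatedly applying them to any nonzero vector eventually lands in $L^{\fh}\neq 0$. This subspace is $W$-stable (because $[\fh,W]\subset\fh$ inside $H_\cc$), so it contains an irreducible $W$-submodule $\tau$, and Frobenius reciprocity for $H_\cc\otimes_{S(\fh)\#W}-$ gives the desired nonzero map $\Delta_\cc(\tau)\to L$. The $\hh$-bookkeeping is then only needed for the first half of your argument (uniqueness of the maximal submodule and injectivity of $\tau\mapsto L_\cc(\tau)$), where it works exactly as you wrote.
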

 Also we can establish the following result about the structure of $\cO_\cc(W)$(\cite[Section 2]{ggor}).
 \begin{prop}\label{proj}
 	The category $\cO_\cc(W)$ has enough projectives and all objects there have finite length. Each projective object admits a finite filtration whose successive quotients are Verma modules.
 \end{prop}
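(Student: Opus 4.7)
The plan is to exploit the grading of objects in $\cO_\cc(W)$ by the Euler element $\hh$. The relations $[\hh,x]=x$ and $[\hh,y]=-y$ imply that $\hh$ preserves the natural $S(\fh^*)$-degree filtration on any finitely generated $M\in\cO_\cc(W)$, and local nilpotence of $\fh$ together with finite generation force $M/\fh M$ to be a finite-dimensional $W$-module. A standard inductive argument then shows that $M$ decomposes as a direct sum of finite-dimensional generalized $\hh$-eigenspaces $M=\bigoplus_\alpha M_\alpha$, with eigenvalues contained in finitely many cosets of $\ZZ$ in $\CC$ and bounded below in real part. By Schur's lemma applied to the explicit formula for $\hh$, the Euler element acts on $\tau\subset\Delta_\cc(\tau)$ by a scalar $h_\tau$, which is the minimum $\hh$-eigenvalue of $L_\cc(\tau)$, realized with multiplicity $\dim\tau$.

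Finite length is then immediate: if $L_\cc(\tau)$ is a composition factor of $M$, the eigenvalue $h_\tau$ must appear in the $\hh$-spectrum of $M$, and the multiplicity of $L_\cc(\tau)$ in $M$ is at most $\dim M_{h_\tau}/\dim\tau$. Finiteness of $\Irr(W)$ and of each $M_\alpha$ then bounds the length of $M$.

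For enough projectives I would use a truncation argument. For $\alpha\in\RR$, let $\cO_\cc^{\leq\alpha}(W)$ be the full subcategory of $M\in\cO_\cc(W)$ with all $\hh$-eigenvalues of real part $\leq\alpha$; the previous analysis identifies this with the category of finite-dimensional modules over a finite-dimensional quotient of $H_\cc$, which automatically has enough projectives. Given $\tau\in\Irr(W)$, pick $\alpha$ larger than $\text{Re}(h_{\tau'})$ for every $\tau'$ with $h_{\tau'}-h_\tau\in\ZZ_{\geq 0}$; then the projective cover of $L_\cc(\tau)$ inside $\cO_\cc^{\leq\alpha}(W)$ should remain projective in $\cO_\cc(W)$, since any extension in $\cO_\cc(W)$ by a module not in the truncation would introduce composition factors with $\hh$-eigenvalues outside the allowed range, which cannot be glued onto $L_\cc(\tau)$ without violating the bound.

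For the Verma filtration, I would induct on the support $\{\tau:[P:L_\cc(\tau)]\neq 0\}$ of a projective $P$, using the partial order on $\Irr(W)$ to be specified in Section \ref{hw}. Pick $\tau$ maximal in this support; by the defining property of the highest weight order, no $\Delta_\cc(\tau')$ with $\tau'>\tau$ can have $L_\cc(\tau)$ as a composition factor, so the $\tau$-isotypic component of $P/\fh P$ captures all occurrences of $L_\cc(\tau)$ in $P$. Via the adjunction between $\Delta_\cc(-)=H_\cc\otimes_{S(\fh)\# W}(-)$ and taking $\fh$-coinvariants, this component lifts to a surjection $P\twoheadrightarrow \Delta_\cc(\tau)^{\oplus[P:L_\cc(\tau)]}$ whose kernel is again projective with strictly smaller support; induction concludes. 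The main obstacle I anticipate is arranging the partial order on $\Irr(W)$ from Section \ref{hw} to match the $\hh$-grading so that the maximality argument really does produce a Verma quotient; this is the central highest-weight-category input, worked out in the Cherednik setting by Ginzburg, Guay, Opdam, and Rouquier.
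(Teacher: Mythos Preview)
The paper itself does not prove this proposition; it simply cites \cite[Section 2]{ggor}. So there is no in-paper argument to compare against, only the GGOR argument your sketch is meant to reproduce. Your treatment of the $\hh$-grading and of finite length is fine, and the inductive argument for the Verma filtration is the standard one. The problem is in your construction of projectives.

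Your truncation $\cO_\cc^{\leq\alpha}(W)$, defined as the full subcategory of modules all of whose $\hh$-eigenvalues have real part $\leq\alpha$, consists entirely of \emph{finite-dimensional} modules. Consequently the projective cover $P_\alpha(\tau)$ of $L_\cc(\tau)$ in that subcategory is finite-dimensional, and it cannot be projective in $\cO_\cc(W)$. Concretely, the surjection $\Delta_\cc(\tau)\twoheadrightarrow L_\cc(\tau)$ admits no lift $P_\alpha(\tau)\to\Delta_\cc(\tau)$: any such lift would have image surjecting onto $L_\cc(\tau)$, hence not contained in the radical of $\Delta_\cc(\tau)$, hence equal to $\Delta_\cc(\tau)$; but $\Delta_\cc(\tau)\cong S(\fh^*)\otimes\tau$ is infinite-dimensional. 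So your stabilization claim fails, and the truncation is in the wrong variable.

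The GGOR fix is to truncate on the $S(\fh)$-side of the induction rather than on the $\hh$-spectrum of the target. One forms
\[
Q_n(\tau)\;:=\;H_\cc\otimes_{S(\fh)\#W}\bigl((S(\fh)/\fh^{n}S(\fh))\otimes\tau\bigr),
\]
an infinite-dimensional object with an evident filtration by Verma modules. By adjunction, $\Hom_{H_\cc}(Q_n(\tau),M)\cong\Hom_W(\tau,\{m\in M:\fh^{n}m=0\})$. Using the $\hh$-grading you already set up, one checks that for $n$ large relative to the finitely many differences $\operatorname{Re}(h_\tau-h_{\tau'})$ in the relevant block, the functor $M\mapsto\{m\in M:\fh^n m=0\}$ is exact on short exact sequences in that block, so $Q_n(\tau)$ is projective there. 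Your induction for the standard filtration then applies to these $Q_n(\tau)$ without change.
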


\subsection{Highest weight structure}\label{hw}
The categories $\cO_\cc(W)$ are highest weight categories, analogous to the classical BGG category $\cO$. This result was established in \cite{ggor}.\par 
Let us start by recalling the general notion of a highest weight category over the field $\CC$. Let $\cC$ be a $\CC$-linear abelian category equivalent to $A\modd$ for some finite dimensional algebra $A$. Equip $\Irr(\cC)$ with a partial order $\leq$. For $L\in\Irr(\cC)$, let $P_L$ denote the projective cover of $L$ and consider the subcategory $\cC_{\leq L}$ consisting of all objects $L'$ in $\cC$ such that $L'\leq L$. The object $\Delta_L$ denotes the maximal quotient of $P_L$ lying in $\cC_{\leq L}$.
\begin{definition}\label{hwdef}
	We say that $\cC$ is a highest weight category (with respect to the order $\leq$) if, for every $L\in\Irr(\cC)$, the kernel of $P_L\twoheadrightarrow \Delta_L$ is filtered by $\Delta_{L'}$ with $L'\in\Irr(\cC)$ and $L'>L$. The objects $\Delta_L$ are called standard.
\end{definition} 
Back to the categories $\cO_\cc(W)$,  Proposition \ref{simple} and Proposition \ref{proj} imply that $\cO_\cc(W)$ is equivalent to the category of modules over the finite dimensional algebra $\End_{\cO_\cc(W)}(P)^{\opp}$, where $P:=\bigoplus_{\tau\in\Irr(W)}P_\cc(\tau)$, and $P_\cc(\tau)$ denotes the projective cover of $L_\cc(\tau)$. Also, we can define a partial order $\leq_\cc$ as follows. First note that the Euler element $\hh$ acts on $\tau\subseteq\Delta_\cc(\tau)$ by a scalar denoted by $\cc_\tau$, which is called the {\it $\cc$-function} of $\tau$. For $W=G(\ell,1,n)$, we can compute the functions $\cc_\nu$ explicitly for $\nu\in\cP_\ell(n)\cong\Irr(W)$. We view the elements of $\cP_\ell(n)$ as $\ell$-tuples of Young diagrams. Let $b$ be a box of $\nu$. It can be characterized by three numbers $x,y,i$, where $x$ is the number of column, $y$ is the number of row, and $i$ is the number of the component $\nu^{(i)}$ containing $b$. Further, we set $\cc_b:=\kappa\ell(x-y)+\ell h_i$ and $\cc_\nu:=\sum_{b\in\nu}\cc_b$. We set $\tau\leq_\cc\xi$ if $\tau=\xi$ or $\cc_\tau-\cc_\xi\in\ZZ_{>0}$. 

The following result is established in \cite[Theorem 2.19]{ggor}.
\begin{prop}\label{hwO}
	For any complex reflection group $W$, the category $\cO_\cc(W)$ is highest weight with respect to the order $\leq_\cc$. The standard $\Delta_{L(\tau)}$ coincides with the Verma module $\Delta_\cc(\tau)$.
\end{prop}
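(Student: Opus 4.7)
The plan is to make the Euler element $\hh$ the central tool, first analyzing its spectrum on Vermas and then propagating the analysis to the entire category.

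First, I would compute the $\hh$-spectrum on Vermas. Since $[\hh,x]=x$ for $x\in\fh^*$, $[\hh,y]=-y$ for $y\in\fh$, and $[\hh,w]=0$ for $w\in W$, the triangular decomposition identifies $\Delta_\cc(\tau)$ with $S(\fh^*)\otimes\tau$ as $S(\fh^*)\#W$-modules in such a way that $\hh$ acts on the degree-$k$ summand $S^k(\fh^*)\otimes\tau$ by the scalar $\cc_\tau+k$. Hence $\hh$ acts on $\Delta_\cc(\tau)$ semisimply with eigenvalues $\cc_\tau+\ZZ_{\geq 0}$ and finite-dimensional eigenspaces. Using Proposition \ref{proj}, any $M\in\cO_\cc(W)$ is a quotient of a finite direct sum of projectives each having a Verma filtration, so $\hh$ acts locally finitely on $M$ with finite-dimensional generalized eigenspaces and eigenvalues confined to a finite union of cosets $\cc_{\tau_j}+\ZZ_{\geq 0}$.

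Second, I would derive the composition-factor bound for Vermas. If $L_\cc(\xi)$ is a composition factor of $\Delta_\cc(\tau)$, then the $W$-submodule $\xi\subseteq L_\cc(\xi)$ embeds into a subquotient of $\Delta_\cc(\tau)$, producing nonzero $\hh$-eigenvectors of eigenvalue $\cc_\xi$ in $\Delta_\cc(\tau)$; hence $\cc_\xi\in\cc_\tau+\ZZ_{\geq 0}$. Moreover, the $\cc_\tau$-eigenspace of $\Delta_\cc(\tau)$ is precisely the degree-zero copy of $\tau$, which is $W$-irreducible, so $\cc_\xi=\cc_\tau$ forces $\xi=\tau$. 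Therefore every composition factor $L_\cc(\xi)$ of $\Delta_\cc(\tau)$ with $\xi\neq\tau$ satisfies $\xi<_\cc\tau$, which combined with Proposition \ref{simple} shows that $\Delta_\cc(\tau)$ has simple head $L_\cc(\tau)$ with all other composition factors strictly below $L_\cc(\tau)$ in the order $\leq_\cc$.

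Third, I would verify the remaining highest-weight axiom using Proposition \ref{proj}. Fix $\tau\in\Irr(W)$ and choose any Verma filtration $0=F_0\subset\cdots\subset F_N=P_\cc(\tau)$ with $F_i/F_{i-1}\cong\Delta_\cc(\tau_i)$. The head $L_\cc(\tau_N)$ of the top quotient must coincide with the head $L_\cc(\tau)$ of $P_\cc(\tau)$, so $\tau_N=\tau$, producing a canonical surjection $P_\cc(\tau)\twoheadrightarrow\Delta_\cc(\tau)$. To match Definition \ref{hwdef} I would then rearrange the remaining filtration so that all $\tau_i$ with $i<N$ satisfy $\tau_i>_\cc\tau$, iterating adjacent swaps $\Delta_\cc(\tau_i)\leftrightarrow\Delta_\cc(\tau_{i+1})$, which are permissible whenever $\Ext^1(\Delta_\cc(\tau_{i+1}),\Delta_\cc(\tau_i))=0$. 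Such vanishing occurs in particular when $\cc_{\tau_{i+1}}-\cc_{\tau_i}\notin\ZZ_{>0}$, a fact that follows from the $\hh$-eigenvalue analysis above.

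The main obstacle I anticipate is this last rearrangement. Making the $\Ext^1$-vanishing argument rigorous and then excluding a second copy of $\Delta_\cc(\tau)$ from the filtration (using $[\Delta_\cc(\tau):L_\cc(\tau)]=1$ together with projectivity of $P_\cc(\tau)$) is the technical heart of the proof. The arguments are standard in BGG-style category $\cO$ theory and are performed in detail in \cite{ggor}; once the rearrangement is in hand, the identification $\Delta_{L_\cc(\tau)}=\Delta_\cc(\tau)$ required by Definition \ref{hwdef} is immediate.
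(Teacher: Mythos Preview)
The paper does not give its own proof of this proposition; it simply records that the result is established in \cite[Theorem 2.19]{ggor}. Your outline is a faithful sketch of the standard GGOR argument, built around the Euler grading, and is correct as far as it goes. In particular, your Steps 1 and 2 are exactly how \cite{ggor} controls the composition factors of Verma modules, and your Step 3 correctly identifies the remaining work (reordering the Verma flag of $P_\cc(\tau)$ via $\Ext^1$-vanishing) and honestly defers the details to \cite{ggor}.

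One small caution on Step 3: the exclusion of a second copy of $\Delta_\cc(\tau)$ in the Verma flag of $P_\cc(\tau)$ is not quite as immediate as your parenthetical suggests, since a Verma $\Delta_\cc(\tau')$ with $\tau'>_\cc\tau$ can still contain $L_\cc(\tau)$ as a composition factor. The clean way (as in \cite{ggor}) is to first rearrange so that the $\cc_{\tau_i}$ are weakly increasing from bottom to top, then observe that any quotient of the kernel of $P_\cc(\tau)\twoheadrightarrow\Delta_\cc(\tau)$ has head some $L_\cc(\tau_i)$ with $\tau_i>_\cc\tau$, hence the kernel has no nonzero quotient in $\cC_{\leq L_\cc(\tau)}$; this simultaneously shows $\Delta_\cc(\tau)$ is the maximal such quotient and that the kernel is filtered by standards with strictly larger labels. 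This is a minor sharpening of what you wrote rather than a gap.
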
 
It turns out that for the cyclotomic group $W=G(\ell,1,n)$, a rougher order than $\leq_\cc$ will also work. Recall the parameters $(\kappa,s_0,\cdots,s_{\ell-1})$. We will write $\cO_{\kappa,\ss}(n)$ for $\cO_\cc(G(\ell,1,n))$. Then $\cc_b=\kappa\ell(x-y-s_i)-i$. We denote $\cont^\ss(b):=x-y+s_i$. Define an equivalence relation on boxes by $b\sim b'$ if $\kappa(\cont^\ss(b)-\cont^\ss(b'))\in \ZZ$. We write $b\preceq_\cc b'$ if $b\sim b'$ and $\cc_b-\cc_{b'}\in \ZZ_{\geq 0}$. Define the order $\preceq_\cc$ on $\cP_\ell$ as $\lambda\preceq_\cc \lambda'$ if one can order boxes $b_1,\cdots,b_n$ of $\lambda$ and $b_1',\cdots,b_n'$ of $\lambda'$ such that $b_i\preceq_\cc b_i'$ for any $i$. Clearly $\lambda\preceq_\cc \lambda'$ implies $\lambda\leq_\cc\lambda'$. The following result is due to Dunkl and Griffeth, \cite[Theorem 1.2]{dg}.
\begin{prop}\label{roughorder}
	One can take $\preceq_\cc$ for a highest weight order for $\cO_\cc(n)$.
\end{prop}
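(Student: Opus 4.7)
The plan is to reduce the proposition to a single combinatorial statement about composition factors of Verma modules, and then invoke the intertwining operator analysis of \cite{dg}.

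Since $\preceq_\cc$ is coarser than $\leq_\cc$ and Proposition~\ref{hwO} already provides a highest weight structure for $\leq_\cc$ with Vermas as standards, I would show that $\preceq_\cc$ is also a valid highest weight order (with the same Vermas as standards) if and only if
\begin{equation*}
[\Delta_\cc(\lambda):L_\cc(\mu)]>0 \ \Longrightarrow\ \mu\preceq_\cc\lambda, \qquad \lambda,\mu\in\cP_\ell(n).
\end{equation*}
Granted this implication, $\Delta_\cc(\lambda)$ lies in the Serre subcategory $\cO_\cc^{\preceq\lambda}$, so it equals the maximal quotient of $P_\cc(\lambda)$ in $\cO_\cc^{\preceq\lambda}$, which is the standard for $\preceq_\cc$. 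BGG reciprocity $[P_\cc(\lambda):\Delta_\cc(\xi)]=[\Delta_\cc(\xi):L_\cc(\lambda)]$, combined with the implication applied to the pair $(\lambda,\xi)$, forces any $\xi\neq\lambda$ appearing in the standard filtration of $P_\cc(\lambda)$ to satisfy $\lambda\preceq_\cc\xi$, i.e.\ $\xi\succ_\cc\lambda$; this verifies the kernel-filtration axiom of Definition~\ref{hwdef} in the coarser order.

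To prove the implication, I would analyze the Jantzen filtration of $\Delta_\cc(\lambda)$ under a one-parameter deformation of the parameter $\cc$. Each layer of this filtration is built from images of non-zero homomorphisms $\Delta_\cc(\mu_1)\to\Delta_\cc(\lambda)$, so every composition factor $L_\cc(\mu)$ of $\Delta_\cc(\lambda)$ appears as a composition factor of some $\Delta_\cc(\mu_1)$ admitting a non-zero Verma map to $\Delta_\cc(\lambda)$; recursing on $\mu_1$ (the recursion terminates since Vermas in $\cO_\cc(n)$ have finite length by Proposition~\ref{proj}) and using transitivity of $\preceq_\cc$, it suffices to show that a non-zero map $\Delta_\cc(\mu_1)\to\Delta_\cc(\lambda)$ forces $\mu_1\preceq_\cc\lambda$. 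This last statement is the main content of \cite[Theorem~1.2]{dg}: via an explicit construction of singular vectors using Dunkl--Cherednik intertwining operators on Jack-type polynomial bases, each singular vector is shown to produce an $\ell$-partition whose boxes admit a $\preceq$-respecting matching with those of $\lambda$.

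The hard part is the last step, namely the explicit singular-vector classification carried out in \cite{dg}; by contrast, the reduction to a single implication and its verification via BGG reciprocity are formal manipulations within the highest weight framework already set up in Proposition~\ref{hwO}.
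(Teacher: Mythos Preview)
The paper gives no proof of its own here; it simply attributes the proposition to \cite[Theorem~1.2]{dg}. Your first reduction---showing that, given the highest weight structure for $\leq_\cc$ from Proposition~\ref{hwO}, the rougher order $\preceq_\cc$ works iff $[\Delta_\cc(\lambda):L_\cc(\mu)]>0$ forces $\mu\preceq_\cc\lambda$, and then verifying via BGG reciprocity that the Vermas remain the standards and that the kernel of $P_\cc(\lambda)\twoheadrightarrow\Delta_\cc(\lambda)$ is $\Delta$-filtered with strictly $\succ_\cc$ labels---is correct and is the standard way to pass to a rougher highest weight order.

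The Jantzen step, however, is a gap. The assertion that ``each layer of this filtration is built from images of non-zero homomorphisms $\Delta_\cc(\mu_1)\to\Delta_\cc(\lambda)$'' is not a consequence of any Jantzen-type sum formula: such a formula is only a virtual identity in $K_0$ and neither produces actual Verma maps into $\Delta_\cc(\lambda)$ nor describes the individual layers as modules. Even in the classical Lie-algebra setting one does not deduce strong linkage this way; one combines the sum formula with downward induction on the order, and for cyclotomic Cherednik algebras there is no off-the-shelf sum formula of the required shape. The detour is in any case unnecessary: \cite[Theorem~1.2]{dg} establishes the composition-factor implication directly, by computing when the contravariant form on $\Delta_\cc(\lambda)$ degenerates (via norms on a nonsymmetric Jack polynomial basis), rather than by first classifying Verma homomorphisms. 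So your plan and the paper's ultimately rest on the same citation; the difference is that your intermediate reduction to Verma maps is both unneeded and, as written, unjustified.
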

We set $\cO_{\kappa,\ss}:=\bigoplus_{n\geq 0}\cO_{\kappa,\ss}(n)$. We have the basis $|\lambda\rangle:=[\Delta_{\kappa,\ss}(\lambda)]$ in $K_0^\CC(\cO_{\kappa,\ss})$ indexed by $\lambda\in\cP_\ell:=\bigsqcup_{n\geq 0}\cP_\ell(n)$. In other words, $K_0^\CC(\cO_{\kappa,\ss})$ is the level $\ell$ Fock space.\par 
For certain values of $\ss$, the category $\cO_{\kappa,\ss}$ can be decomposed as follows. We define an equivalence relation $\sim_\cc$ on $\{0,\cdots,\ell-1\}$ by setting $i\sim_\cc j$ if the $i$th and $j$th partitions can contain equivalent boxes, i.e., $s_i-s_j\in \kappa^{-1}\ZZ+\ZZ$. For an equivalence class $\alpha$, we write $\ss(\alpha)$ for $(s_i)_{i\in\alpha}$ and $\cP_\alpha$ for the subset of all $\nu\in\cP_\ell$ with $\nu^{(j)}=\emptyset$ for $j\notin\alpha$. Form the category $\bigotimes_\alpha\cO_{\kappa,\ss(\alpha)}$. The simples in this category are labeled by the set $\Pi_\alpha\cP_\alpha$ that is naturally identified with $\cP_\ell$.
\begin{prop}\cite[Lemma 4.2]{Lsupport}\label{blocks}
	There is a highest weight equivalence $\cO_{\kappa,\ss}\xrightarrow{\sim}\bigotimes_\alpha\cO_{\kappa,\ss(\alpha)}$
\end{prop}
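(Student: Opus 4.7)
The plan is to derive the decomposition from the highest weight structure of Proposition \ref{roughorder}, by showing that the order $\preceq_\cc$ respects a ``profile'' assignment across the equivalence classes $\alpha$ of $\sim_\cc$, and then identifying each resulting summand with the appropriate tensor factor.

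First, I would check that content-equivalent boxes must lie in components belonging to the same equivalence class. Explicitly, if $b$ is a box in component $i$ and $b'$ a box in component $j$, then $\cont^\ss(b) - \cont^\ss(b') = m + (s_i - s_j)$ for some $m \in \ZZ$, so $\kappa(\cont^\ss(b) - \cont^\ss(b')) \in \ZZ$ is equivalent to $s_i - s_j \in \kappa^{-1}\ZZ + \ZZ$, which is precisely $i \sim_\cc j$. Consequently, whenever $\lambda \preceq_\cc \mu$, the required box pairing forces each box of $\lambda$ in a component indexed by $\alpha$ to be matched with a box of $\mu$ in another component indexed by the same $\alpha$. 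Hence the ``profile'' $\underline{n}(\lambda) := \bigl(\sum_{i \in \alpha} |\lambda^{(i)}|\bigr)_\alpha$ depends only on the $\preceq_\cc$-equivalence class of $\lambda$.

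Since $\preceq_\cc$ is a highest weight order (Proposition \ref{roughorder}) and preserves the profile, the standard block principle for highest weight categories applies: the projective cover of $L(\lambda)$ is filtered by standards $\Delta(\lambda')$ with $\lambda' \succeq_\cc \lambda$, hence with $\underline{n}(\lambda') = \underline{n}(\lambda)$, so the full subcategories indexed by profiles are direct summands. This gives a decomposition $\cO_{\kappa,\ss} = \bigoplus_{\underline{n}} \cO_{\kappa,\ss}^{\underline{n}}$ as highest weight categories.

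It remains to identify $\cO_{\kappa,\ss}^{\underline{n}}$ with $\bigotimes_\alpha \cO_{\kappa,\ss(\alpha)}(n_\alpha)$. The bijection of labels $\lambda \leftrightarrow (\lambda|_\alpha)_\alpha$ is clear, and the $\cc$-function splits additively as $\cc_\lambda = \sum_\alpha \cc_{\lambda|_\alpha}^{(\alpha)} + \mathrm{const}(\underline{n})$ by grouping the box sum by equivalence class, which matches the highest weight orders on both sides. To upgrade this combinatorial match to an equivalence of highest weight categories I would appeal either to Rouquier's uniqueness theorem for highest weight covers with matching standards, or to the categorical Kac-Moody action to be introduced in Chapter 3: the affine Lie algebra $\fg$ acting on $\cO_{\kappa,\ss}$ decomposes as $\bigoplus_\alpha \fg_\alpha$ along equivalence classes, with the $\alpha$-component acting only on the corresponding tensor factor, categorifying the Fock-space tensor decomposition.

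The main obstacle is this last step. The block decomposition along profiles is essentially formal once compatibility with the highest weight order is checked, but upgrading it to an equivalence with the \emph{specific} tensor product category---rather than to an abstract highest weight category sharing the same labels and order---requires producing a genuine functor. The categorical Kac-Moody approach (or, alternatively, a comparison via the KZ functor to cyclotomic Hecke algebras, whose blocks are known to decompose in the same manner) supplies this functor.
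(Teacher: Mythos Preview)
The paper does not give its own proof of this proposition; it simply cites \cite[Lemma 4.2]{Lsupport}. So there is no in-paper argument to compare against, and your proposal should be judged on its own merits.

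Your outline is sound through the block decomposition: the computation that $b\sim b'$ forces the components containing $b$ and $b'$ to lie in the same $\sim_\cc$-class is correct, and from it the profile-preservation under $\preceq_\cc$ and the resulting splitting $\cO_{\kappa,\ss}=\bigoplus_{\underline{n}}\cO_{\kappa,\ss}^{\underline{n}}$ follow formally from Proposition~\ref{roughorder}. You are also right that the substantive step is the last one: identifying each block with $\bigotimes_\alpha\cO_{\kappa,\ss(\alpha)}(n_\alpha)$, which is a category for a \emph{different} reflection group $\prod_\alpha G(|\alpha|,1,n_\alpha)$, not merely an abstract highest weight category with the same poset.

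Of your two proposed routes, the KZ/Hecke comparison is the one actually used in \cite{Lsupport}. One shows that the cyclotomic Hecke algebra $\cH_q^\ss(n)$, when the parameters $Q_i=\exp(2\pi\sqrt{-1}\kappa s_i)$ split into separate $q$-orbits indexed by the classes $\alpha$, is Morita equivalent block-by-block to a tensor product of cyclotomic Hecke algebras for the groups $G(|\alpha|,1,n_\alpha)$ (a result of Dipper--Mathas type), and then applies Rouquier's uniqueness of $1$-faithful highest weight covers to lift this to the desired equivalence on the Cherednik side. Your alternative via the categorical Kac-Moody action is plausible but would be logically awkward here, since in the present paper that action is only constructed in Section~\ref{km}, after Proposition~\ref{blocks}. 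So your sketch is on the right track; to complete it, commit to the KZ route and make the Hecke-side Morita equivalence and the cover-uniqueness step explicit.
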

Using Proposition \ref{blocks}, we can reduce the study of $\cO_{\kappa,\ss}$ to the case when we have just one equivalence class in $\{0,1,\cdots,\ell-1\}$.
 
\subsection{Functors on $\cO_\cc(W)$}\label{functors}
 We recall several functors from the category $\cO_\cc(W)$, which will be important in the construction of the categorical Kac-Moody action: the localization functor, the KZ functor, and the (parabolic) induction and restriction functors.
 
 \subsubsection{Localization functor}\label{local}
 Let $\fh^{\reg}$ denote the open subset of $\fh$ consisting of all $y\in\fh$ with the point-wise stabilizer $W_y=\{1\}$, equivalently $\fh^{\reg}=\fh\setminus\bigcup_{s\in S}\ker\alpha_s$. Consider an element $\delta\in\CC[\fh]^W$ whose set of zeroes in $\fh$ coincides with $\fh\setminus\fh^{\reg}$. We can take $\delta=(\Pi_{s\in S}\alpha_s)^k$ where $k$ is a suitable integer such that $\delta\in \CC[\fh]^W$. Note that $[\delta,x]=[\delta,w]=0$ for all $x\in \fh^*$, $w\in W$, and $[\delta,y]\in S(\fh^*)\#W$ so $[\delta,[\delta,y]]=0$. It follows that the endomorphism $[\delta,\cdot]$ of $H_\cc$ is locally nilpotent. So the set $\{\delta^k,k\geq 0\}$ satisfies the Ore conditions and we have the localization $H_\cc[\delta^{-1}]$ consisting of right fractions. We have the {\it Dunkl homomorphism} $H_\cc\to D(\fh^{\reg})\# W$ defined on generators $x\in \fh^*$, $w\in W$, $y\in \fh$ as follows
 $$ x\mapsto x, w\mapsto w, y\mapsto y+\sum_{s\in S}\frac{2\cc(s)\langle \alpha_s,y \rangle}{(1-\lambda_s)\alpha_s}(s-1). $$
 This homomorphism factors through $H_\cc[\delta^{-1}]\to D(\fh^{\reg})\# W$ because $\delta$ is invertible in $D(\fh^{\reg})\# W$. The following lemma is easy.
 \begin{lemma}\label{easy}
 	The homomorphism $H_\cc[\delta^{-1}]\to D(\fh^{\reg})\# W$ is an isomorphism.
 \end{lemma}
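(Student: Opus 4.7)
\textbf{Proof plan for Lemma \ref{easy}.} My strategy is a filtration / associated-graded comparison, exactly parallel to how the PBW theorem (Theorem \ref{pbw}) is typically exploited. I will equip both $H_\cc[\delta^{-1}]$ and $D(\fh^{\reg})\#W$ with compatible filtrations, identify their associated gradeds as the same smash-product algebra, and then verify that the induced graded map is an isomorphism.

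On the source, extend the standard filtration on $H_\cc$ (with $\deg \fh = 1$ and $\deg \fh^* = \deg W = 0$) to $H_\cc[\delta^{-1}]$ by declaring $\deg \delta^{-1} = 0$. Because $\delta \in S(\fh^*)$ already lies in filtration degree $0$ and its principal symbol in $\gr H_\cc = S(\fh\oplus\fh^*)\#W$ is a nonzerodivisor, one obtains
\[
\gr H_\cc[\delta^{-1}] \;=\; \bigl(S(\fh\oplus\fh^*)\#W\bigr)[\delta^{-1}] \;\cong\; \CC[\fh^{\reg}\times\fh^*]\#W.
\]
On the target, filter $D(\fh^{\reg})\#W$ by order of the differential operator (placing $W$ in degree $0$); the principal symbol then yields $\gr(D(\fh^{\reg})\#W) = \CC[T^*\fh^{\reg}]\#W = \CC[\fh^{\reg}\times\fh^*]\#W$, the same algebra.

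Next I would check that the Dunkl homomorphism preserves the filtrations. The images of $x\in\fh^*$, of $w\in W$, and of $\delta^{-1}$ are manifestly in degree $0$. For $y\in\fh$, the correction term $\sum_{s\in S}\frac{2\cc(s)\langle\alpha_s,y\rangle}{(1-\lambda_s)\alpha_s}(s-1)$ is a multiplication operator on $\fh^{\reg}$ composed with a group element, hence order $0$ as a differential operator; so the image of $y$ has order exactly $1$ with principal symbol $y\in\fh\subseteq\CC[\fh^{\reg}\times\fh^*]$. Consequently the induced graded map is the identity on the generators $\fh^*,\fh,W,\delta^{-1}$ of $\CC[\fh^{\reg}\times\fh^*]\#W$, hence an isomorphism.

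A standard lifting argument now upgrades the graded isomorphism to an isomorphism $H_\cc[\delta^{-1}]\xrightarrow{\sim}D(\fh^{\reg})\#W$: both filtrations are exhaustive and non-negative (so separated), and a filtered map inducing an isomorphism on $\gr$ is itself an isomorphism. I do not anticipate a substantial obstacle. The only delicate point to verify carefully is that $\gr$ commutes with inverting $\delta$, which holds because $\delta$ has a nonzerodivisor as principal symbol and its commutator $[\delta,\cdot]$ acts locally nilpotently on $H_\cc$ (already observed in the preceding paragraph of the text), so the Ore localization and the graded localization give canonically identified objects.
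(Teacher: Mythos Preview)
Your argument is correct and is the standard filtered-to-graded proof of this fact (essentially the one given in \cite[Proposition 4.5]{eg}). The paper itself gives no proof at all: it simply labels the lemma ``easy'' and moves on, so there is nothing to compare against beyond noting that your approach is exactly the expected one. The one point you flagged as delicate---that $\gr$ commutes with inverting $\delta$---is indeed the only thing requiring a moment's thought, and your justification (principal symbol of $\delta$ is a nonzerodivisor in $S(\fh\oplus\fh^*)\#W$, so multiplication by $\delta$ strictly preserves filtration degree on $H_\cc$) is the right one.
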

 Let $M\in \cO_\cc(W)$. Thanks to Lemma \ref{easy}, we can view $M[\delta^{-1}]$ as a module over $D(\fh^{\reg})\#W$. This module is finitely generated over $\CC[\fh^{\reg}]\#W$. So $M[\delta^{-1}]$ is a $W$-equivariant local system over $\fh^{\reg}$. Computing $\Delta_\cc(\tau)[\delta^{-1}]$ explicitly, we see that $M[\delta^{-1}]$ has regular singularities (\cite[Proposition 5.7]{ggor}). Therefore the same is true for any $M$. So we get an exact functor $M\mapsto M[\delta^{-1}]$ from $\cO_\cc(W)$ to the category $\Loc_{rs}^W(\fh^{\reg})$ of $W$-equivariant local systems on $\fh^{\reg}$ with regular singularities.
 \subsubsection{KZ functor}\label{KZ}
 Pick a point $p\in \fh^{\reg}/W$ and let $\pi$ denote the quotient morphism $\fh^{\reg}\to\fh^{\reg}/W$. According to Deligne, there is a category equivalence $\Loc_{rs}^W(\fh^{\reg})\cong\pi_1(\fh^{\reg}/W,p)\modd_{\fin}$, where the latter is the category of finite dimensional modules over the fundamental group of $\fh^{\reg}/W$ with base point $p$, defined as $N\mapsto [\pi_*(N)^W]_p$. The group $\pi_1(\fh^{\reg}/W,p)$ is known as the braid group of $W$ and denoted by $B_W$. In general, $B_W$ is generated by $T_H$, where $H$ runs over the set of reflection hyperplanes for $W$, with certain relations.
 
 We want to determine the essential image of the functor $\cO_\cc(W)\to \Loc_{rs}^W(\fh^{\reg})\xrightarrow{\cong} B_W\modd_{\fin}$. It turns out that this image coincides with that of $\cH_q(W)\modd\hookrightarrow B_W\modd_{\fin}$ (see \cite{e2}), where $\cH_q(W)$, called the Hecke algebra of $W$, is a quotient of $\CC B_W$ by the relations
 $$ \Pi_{i=1}^{|W_H|}(T_H-q_{H,i})=0, $$
 with $H$ running over the set of reflection hyperplanes for $W$, where 
 $$q=\{q_{H,i}:H \text{ is a reflection hyperplane for W, } i=0,1,\cdots,\ell_H-1\}$$
 is a parameter recovered from $\cc$ via
 $$ q_{H,j}=\exp(2\pi\sqrt{-1}(h_{H,j}+\frac{j}{\ell_H})). $$
  
 It was shown in \cite[Theorem 5.13]{ggor} that the functor $\cO_\cc(W)\to B_W\modd_{\fin}$ decomposes as the composition of $\KZ_\cc:\cO_\cc(W)\to \cH_q(W)\modd$ (called the {\it KZ functor}) and the inclusion $\cH_q(W)\modd\hookrightarrow B_W\modd_{\fin}$.
 Let us list some properties of the KZ functor obtained in \cite[Section 5]{ggor}.
 \begin{prop}\label{KZproperty}
 	The following is true.
 	\begin{itemize}
 		\item[(1)] The KZ functor $\cO_\cc(W)\twoheadrightarrow \cH_q(W)\modd$ is a quotient functor. Its kernel is the subcategory $\cO_{\cc,tor}(W)\subseteq \cO_c(W)$ consisting of all modules in $\cO_\cc(W)$ that are torsion over $\CC[\fh]$ (equivalently, whose support is a proper subvariety in $\fh$).
 		\item[(2)] The functor $\KZ_\cc$ is defined by a projective object $P_{\KZ}$ in $\cO_\cc(W)$ that is also injective. The multiplicity of $\Delta_\cc(\tau)$ in $P_{\KZ}$ equals $\dim\tau$.
 		\item[(3)] $\KZ_\cc$ is fully faithful on the projective objects in $\cO_\cc(W)$. 
 		\item[(4)] Suppose that the parameter $q$ satisfies the following condition: for any reflection hyperplane $H$, we have $q_{H,i}\neq q_{H,j}$ for any $i\neq j$. Then $\KZ_\cc$ is fully faithful on all standardly filtered objects.
    \end{itemize}
 \end{prop}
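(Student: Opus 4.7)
The plan is to realize $\KZ_\cc$ explicitly as the composition of Dunkl localization $M\mapsto M[\delta^{-1}]$ with Deligne's Riemann--Hilbert equivalence and the inclusion $\cH_q(W)\modd \hookrightarrow B_W\modd_{\fin}$, and to analyze each of the four statements through this description. For (1), since any $M\in\cO_\cc(W)$ is finitely generated over $\CC[\fh]$, the localization $M[\delta^{-1}]$ vanishes iff $M$ is $\delta$-torsion iff $\Supp_{\CC[\fh]}(M)\subseteq V(\delta)=\fh\setminus\fh^{\reg}$, a proper closed subvariety; this identifies the kernel. The quotient-functor property then follows because localization at $\delta$ is a Serre quotient onto $D(\fh^{\reg})\#W\modd$ by Lemma \ref{easy}, Deligne's equivalence is an equivalence of abelian categories, and the essential image lands in $\cH_q(W)\modd$ by the cited result of Etingof.

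For (2), since $\cO_\cc(W)\simeq A\modd$ for a finite-dimensional $A$ (by Proposition \ref{proj}) and $\KZ_\cc$ is exact and $\CC$-linear, it is represented as $\KZ_\cc \cong \Hom_{\cO_\cc}(P_{KZ},-)$ for a projective object $P_{KZ}$. The injectivity of $P_{KZ}$ follows by showing that $P_{KZ}$ is self-dual under the contravariant duality on $\cO_\cc(W)$ coming from an anti-involution of $H_\cc$ swapping $\fh$ and $\fh^*$; this duality exchanges projectives and injectives. For the multiplicity, first compute $\dim\KZ_\cc(\Delta_\cc(\tau))$: the localization $\Delta_\cc(\tau)[\delta^{-1}]\cong \CC[\fh^{\reg}]\otimes\tau$ is a $W$-equivariant vector bundle of rank $\dim\tau$ whose $W$-invariant pushforward along $\fh^{\reg}\to\fh^{\reg}/W$ is a local system of the same rank (as $W$ acts freely on $\fh^{\reg}$). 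Combining exactness of $\KZ_\cc$ with BGG reciprocity (Proposition \ref{hwO}) yields
$$[P_{KZ}:\Delta_\cc(\tau)] = \sum_\xi n_\xi\,[P_\cc(\xi):\Delta_\cc(\tau)] = \sum_\xi n_\xi\,[\Delta_\cc(\tau):L_\cc(\xi)] = \dim\KZ_\cc(\Delta_\cc(\tau)) = \dim\tau,$$
where $n_\xi = \dim\KZ_\cc(L_\cc(\xi))$ is the multiplicity of $P_\cc(\xi)$ in $P_{KZ}$.

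Part (3) is the standard Serre-quotient argument. For projectives $P, Q$, any $\phi:P\to Q$ with $\KZ_\cc(\phi)=0$ has image in $\cO_{\cc,tor}(W)$; but $Q$ admits a $\Delta$-filtration (Proposition \ref{proj}) by Vermas $\Delta_\cc(\tau)\cong \CC[\fh]\otimes\tau$ that are torsion-free over $\CC[\fh]$, so $Q$ is itself torsion-free and $\phi=0$. Surjectivity on Homs is then obtained from the fact that $P_{KZ}$ is injective (part (2)), which gives $\Ext^1_{\cO_\cc}(T, P_{KZ}) = 0$ for torsion $T$ and, via the $\Delta$-filtration argument, $\Ext^1_{\cO_\cc}(T, Q) = 0$ for every projective $Q$; together these let any $\psi:\KZ_\cc P\to\KZ_\cc Q$ be lifted via the Serre quotient description as roofs.

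The main obstacle is (4). The plan is to induct on the length of a $\Delta$-filtration: writing a short exact sequence $0\to\Delta_\cc(\tau)\to M\to M'\to 0$, one applies $\Hom(N,-)$ (for $N$ standardly filtered) and the analogous $\Hom_{\cH_q}(\KZ_\cc N, \KZ_\cc -)$ long exact sequences and compares them via the five-lemma. The base case and the five-lemma step together require $\Ext^1_{\cH_q(W)}(\KZ_\cc\Delta_\cc(\sigma), \KZ_\cc\Delta_\cc(\tau))$ to match $\Ext^1_{\cO_\cc}(\Delta_\cc(\sigma), \Delta_\cc(\tau))$, and the condition $q_{H,i}\neq q_{H,j}$ is precisely what forces the defining polynomial $\prod_i(T_H - q_{H,i})$ of each $T_H\in\cH_q(W)$ to be separable, ruling out the relevant Hecke-algebra self-extensions and reducing the Ext comparison to what is already known from parts (1)--(3). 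Establishing this Ext-vanishing carefully from the distinctness of the $q_{H,i}$ is the delicate step; once it is in hand, the inductive extension to all standardly filtered objects is formal.
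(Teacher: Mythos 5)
The paper itself contains no proof of Proposition \ref{KZproperty}: it is quoted verbatim from \cite[Section 5]{ggor} (with the identification of the target as $\cH_q(W)\modd$ on the nose resting on Etingof's theorem that $\dim\cH_q(W)=|W|$). So what you are really doing is reconstructing the GGOR arguments, and for (1) and (2) your reconstruction is essentially correct: the kernel computation via Dunkl localization, the representability of an exact $\CC$-linear functor on $A\modd$ by a projective, and the BGG-reciprocity computation of $[P_{\KZ}:\Delta_\cc(\tau)]$ all match the standard proof. Two caveats: the ``self-duality'' argument for injectivity of $P_{\KZ}$ needs care, since the naive duality is a contravariant equivalence $\cO_\cc(W,\fh)\to\cO_{\cc'}(W,\fh^*)$ for a transformed parameter, so one must compare the two KZ functors under duality rather than invoke a duality on $\cO_\cc(W)$ itself; and the cleanest route to ``quotient functor onto $\cH_q(W)\modd$'' is via (2), namely $\KZ_\cc\cong\Hom(P_{\KZ},-)$ together with $\End(P_{\KZ})^{\opp}\cong\cH_q(W)$ --- your localization argument identifies the kernel and the essential image but does not by itself give fullness of the induced functor on the Serre quotient.

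The genuine gaps are in (3) and (4). In (3), faithfulness is fine, but the fullness step claims $\Ext^1_{\cO_\cc}(T,Q)=0$ for torsion $T$ and projective $Q$ ``via the $\Delta$-filtration argument''; a $\Delta$-filtration cannot deliver this, because it would reduce to $\Ext^1_{\cO_\cc}(T,\Delta_\cc(\tau))=0$ for all torsion $T$, which is false in general. For $W=\ZZ/2$ and $c=1/2$ the sequence $0\to\Delta_\cc(\mathrm{sgn})\to\Delta_\cc(\mathrm{triv})\to L_\cc(\mathrm{triv})\to 0$ is a non-split extension of a torsion (finite-dimensional) module by a standardly filtered one. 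The correct mechanism, and the real content of the GGOR proof, is that every projective $Q$ admits an exact copresentation $0\to Q\to P_{\KZ}^{\oplus k}\to P_{\KZ}^{\oplus l}$ with torsion-free cokernels (the double centralizer property); only then does injectivity of $P_{\KZ}$ propagate to $\Ext^1_{\cO_\cc}(T,Q)=0$. That copresentation is a theorem requiring its own argument, not a formality. In (4) the key step is missing by your own admission, and the proposed lever is miscalibrated: what fullness on standardly filtered objects requires is precisely $\Ext^1_{\cO_\cc}(T,\Delta_\cc(\tau))=0$ for all torsion $T$ --- equivalently, that $\Delta_\cc(\tau)$ is the maximal $\cO$-lattice inside $\Delta_\cc(\tau)[\delta^{-1}]$ --- and it is exactly here that the hypothesis $q_{H,i}\neq q_{H,j}$ enters in GGOR, through an analysis of the local monodromy of the KZ connection along each hyperplane $H$. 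Separability of $\prod_i(T_H-q_{H,i})$ does not ``rule out the relevant Hecke self-extensions'' ($\cH_q(W)$ with pairwise distinct $q_{H,i}$ is generally far from semisimple), and the obstruction to lifting morphisms lives in $\Ext^1_{\cO_\cc}(T,-)$ entirely inside $\cO_\cc(W)$, not in a comparison of $\Ext^1$ groups between $\cO_\cc(W)$ and $\cH_q(W)\modd$.
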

 We remark that Etingof recently proved (\cite{e2}) that $\cH_q(W)$ is of dimension $|W|$. 
 
 \subsubsection{Induction and restriction functors} \label{indres}
 The induction and restriction functors were introduced by Bezrukavnikov and Etingof in \cite{be}. These functors relate categories $\cO_\cc(W)$ and $\cO_\cc(\underline{W})$, where $\underline{W}$ is a parabolic subgroup in $W$. More precisely, we have functors $\Res_W^{\underline{W}}:\cO_\cc(W)\to \cO_\cc(\underline{W})$ and $\Ind_{\underline{W}}^W:\cO_\cc(\underline{W})\to \cO_\cc(W)$, where in $\cO_\cc(\underline{W})$, abusing the notation, $\cc$ means the restriction of $\cc:S\to \CC$ to $\underline{W}\cap S$. Since we also consider restriction and induction functors for other categories, we will sometimes write $^\cO\Res_W^{\underline{W}}$, $^\cO\Ind_{\underline{W}}^W$. The construction of the functors is technical. We are not going to explain the construction, only the properties.
 \begin{prop}\label{beadj}
 	The functors $\Res_W^{\underline{W}}$ and $\Ind_{\underline{W}}^W$ are biadjoint. Hence they are exact.
 \end{prop}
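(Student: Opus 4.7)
The plan is to reproduce, at the level of a sketch, the Bezrukavnikov-Etingof argument of \cite{be}, which reduces biadjointness to classical Frobenius reciprocity for the finite group inclusion $\underline{W} \subseteq W$. First I would pick a point $b \in \fh$ whose pointwise stabilizer in $W$ equals $\underline{W}$, and let $\wedge_b$ denote the formal completion of $H_\cc(W)$ along the $W$-orbit $Wb$. The crucial algebraic input is the completion isomorphism
$$\theta\colon H_\cc(W)^{\wedge_b} \xrightarrow{\sim} Z(W,\underline{W}, H_\cc(\underline{W})^{\wedge_0}),$$
where $Z(W,\underline{W},A)$ denotes the centralizer algebra construction consisting of $\underline{W}$-equivariant functions from $W$ to $\mathrm{Mat}_{[W:\underline{W}]}(A)$. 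Via $\theta$, the module categories of the two completed algebras become Morita equivalent through a pair of tensor functors with a finitely generated projective bimodule on each side, and such functors are automatically biadjoint.

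Next, I would recall that $\Res_W^{\underline{W}}$ is constructed by completing a module $M \in \cO_\cc(W)$ at $b$, transporting across $\theta^{-1}$ to an $H_\cc(\underline{W})^{\wedge_0}$-module, and then \emph{de-completing} using the integer grading on $\cO$ induced by the Euler element $\hh$ (which has bounded-below, locally finite spectrum on every object of $\cO$). The functor $\Ind_{\underline{W}}^W$ is defined by the symmetric recipe in the opposite direction. On the completed level, biadjointness is the tautological biadjointness of Morita quasi-inverse functors; to transport it to the isomorphisms
$$\Hom_{\cO_\cc(\underline{W})}(\Res_W^{\underline{W}} M, N) \cong \Hom_{\cO_\cc(W)}(M, \Ind_{\underline{W}}^W N)$$
(and the analogue with the roles swapped), I would show that the completion and de-completion steps intertwine the adjunction units and counits, which ultimately rests on Frobenius reciprocity for $\CC\underline{W} \subseteq \CC W$ after matching Euler-weight components.

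The main technical obstacle lies in verifying that both functors actually land in category $\cO$ rather than merely in the larger category of complete modules, and that de-completion is a genuine inverse to completion on the subcategory cut out by the Euler grading in a way compatible with the adjunction data on the completed side. This uses that morphism spaces in $\cO_\cc$ decompose into finite-dimensional Euler-weight spaces with support bounded below, so the Hom isomorphisms on the completed side descend termwise. Once biadjointness is established, exactness is automatic: any functor possessing a left adjoint is right exact and any functor possessing a right adjoint is left exact, so a biadjoint functor is exact in both directions.
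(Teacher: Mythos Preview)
Your sketch has a genuine gap at the step where you claim biadjointness on the completed level transfers automatically to $\cO$. The Morita equivalence between $H_\cc(W)^{\wedge_b}\modd$ and $H_\cc(\underline{W})^{\wedge_0}\modd$ is indeed biadjoint (it is an equivalence), but the completion functor $\cO_\cc \to (\text{completed modules})$ and the de-completion functor (taking the Euler-locally-finite part) form only a \emph{one-sided} adjoint pair, not a biadjoint one. When you compose a one-sided adjunction with an equivalence and another one-sided adjunction, the result has an obvious adjoint on one side only. Concretely, the Bezrukavnikov--Etingof construction in \cite{be} yields $\Ind_{\underline{W}}^W$ as the \emph{right} adjoint of $\Res_W^{\underline{W}}$, and exactness of both is checked there directly; it does \emph{not} produce the left adjointness of $\Ind_{\underline{W}}^W$. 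Your ``symmetric recipe in the opposite direction'' is not how $\Ind$ is actually defined, and making such a symmetric definition agree with the right adjoint is exactly the nontrivial content.

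This is not merely a historical accident: the paper itself records (immediately after the proposition) that the missing adjunction was supplied later, by Shan in \cite[Section 2.4]{s} for the cyclotomic groups via the KZ functor and the coincidence $^\cH\Ind \cong {^\cH\CoInd}$ for Hecke algebras, and by Losev in \cite{Lfunctors} in general. So the proposition is really a citation, and the part you treat as ``tautological'' is the one that required separate proofs. Your final sentence, deriving exactness from biadjointness, is logically fine once biadjointness is in hand; but note that in the literature the order is reversed---exactness is proved first in \cite{be}, independently of the second adjunction.
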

 The claim that $\Res_W^{\underline{W}}$ and $\Ind_{\underline{W}}^W$ are exact was checked in \cite[Section 3.5]{be}. The claim that $\Ind_{\underline{W}}^W$ is right adjoint to $\Res_W^{\underline{W}}$ follows from the construction in {\it loc. cit.}. The other adjointness was established in \cite[Section 2.4]{s} under some restrictions on $W$ and in \cite{Lfunctors} in general.
  
 We have a natural homomorphism $\cH_q(\underline{W})\to \cH_q(W)$. This gives rise to an exact restriction functor $^\cH\Res_W^{\underline{W}}:\cH_q(W)\modd \to \cH_q(\underline{W})\modd$. The following proposition (\cite[Theorem 2.1]{s}) states the relation between restriction functors and KZ functors.
 \begin{prop}\label{resKZ}
 	The KZ functors intertwine the restriction functors: $$\underline{\KZ}_\cc\circ ~^\cO\Res_W^{\underline{W}}\cong ~^\cH\Res_W^{\underline{W}}\circ \KZ_\cc.$$ Here we write $\underline{\KZ}_\cc$ for the KZ functor $\cO_\cc(\underline{W})\to \cH_q(\underline{W})$.
 \end{prop}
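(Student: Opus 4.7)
The plan is to realize both compositions $\underline{\KZ}_\cc \circ {}^\cO\Res_W^{\underline{W}}$ and ${}^\cH\Res_W^{\underline{W}} \circ \KZ_\cc$ as the monodromy representation of the same regular-singular $W$-equivariant local system on a transverse slice to $\fh^{\underline{W}}$. Once this common geometric picture is in place, the isomorphism will follow from Deligne's Riemann--Hilbert correspondence.

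First I would fix a point $b \in \fh^{\underline{W}}$ whose $W$-stabilizer equals $\underline{W}$ exactly, and choose a $\underline{W}$-equivariant complement $Y$ to $\fh^{\underline{W}}$ in $\fh$. For a sufficiently small $W$-invariant analytic neighborhood $U$ of the orbit $Wb$, one checks directly that a reflection $s \in S$ fixes a point $b+v \in U$ only when $s \in \underline{W}$; hence $U \cap \fh^{\reg}$ equals $U$ minus the reflection hyperplanes of $\underline{W}$. Setting $Y^{\reg} := Y \setminus \bigcup_{s \in S \cap \underline{W}} \ker\alpha_s$, projection onto $Y$ together with contracting the $\fh^{\underline{W}}$-direction yields a homotopy equivalence $(U \cap \fh^{\reg})/W \simeq Y^{\reg}/\underline{W}$; composing with the inclusion $(U \cap \fh^{\reg})/W \hookrightarrow \fh^{\reg}/W$ then produces the canonical group homomorphism $B_{\underline{W}} = \pi_1(Y^{\reg}/\underline{W}) \to \pi_1(\fh^{\reg}/W) = B_W$ that underlies $\cH_q(\underline{W}) \to \cH_q(W)$.

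Next I would recall the Bezrukavnikov--Etingof construction of ${}^\cO\Res_W^{\underline{W}}$: it factors through a completion at the orbit $Wb$ using the Morita isomorphism
\[
\widehat{H_\cc(W)}_{Wb} \;\simeq\; \widehat{H_\cc(\underline{W})}_0 \,\hat\otimes\, \widehat{D(\fh^{\underline{W}})}_b,
\]
followed by extracting the $\fh^{\underline{W}}$-locally nilpotent part. The compatibility I must establish is that, after inverting the $\underline{W}$-discriminant on $Y$, this Morita isomorphism is identified with the natural isomorphism of completed twisted-differential-operator algebras coming from the local product decomposition $U \cap \fh^{\reg} \simeq \fh^{\underline{W}} \times Y^{\reg}$; in other words, the Bezrukavnikov--Etingof isomorphism is compatible with the respective Dunkl embeddings for $W$ and $\underline{W}$. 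Granting this, for any $M \in \cO_\cc(W)$, the localization of ${}^\cO\Res_W^{\underline{W}}(M)$ along the $\underline{W}$-discriminant is canonically identified with the pullback of $M[\delta^{-1}]$ to $Y^{\reg}$ along the slice embedding, together with its evident $\underline{W}$-equivariant structure.

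Finally I would invoke Deligne's Riemann--Hilbert equivalence $\Loc_{rs}^{\underline{W}}(Y^{\reg}) \simeq B_{\underline{W}}\modd_{\fin}$, and the analogous statement for $W$. By the previous step, $\underline{\KZ}_\cc({}^\cO\Res_W^{\underline{W}}(M))$ is the $B_{\underline{W}}$-monodromy of the aforementioned local system on $Y^{\reg}$, while ${}^\cH\Res_W^{\underline{W}}(\KZ_\cc(M))$ is the $B_W$-monodromy of $M[\delta^{-1}]$ restricted along the map $B_{\underline{W}} \to B_W$ of the first paragraph; the local homotopy equivalence then identifies these two $B_{\underline{W}}$-representations canonically, with naturality in $M$ manifest from the constructions. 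The main obstacle is the second step: carefully verifying that the Bezrukavnikov--Etingof Morita isomorphism agrees with the geometric Dunkl identification after inverting the $\underline{W}$-discriminant. This reduces to checking that the correction terms in the $W$-Dunkl formula indexed by reflections $s \in S \setminus \underline{W}$, along with those $s \in \underline{W}$ whose root $\alpha_s$ is nowhere vanishing on $U$, become analytic or invertible near $b$, so that on the relevant completions they are absorbed into the tautological product isomorphism and only the $\underline{W}$-Dunkl corrections on the slice $Y$ remain.
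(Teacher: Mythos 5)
Your proposal is correct and follows exactly the route of the proof the paper is implicitly relying on: the paper offers no argument of its own here, citing \cite[Theorem 2.1]{s}, and Shan's proof is precisely your combination of the Bezrukavnikov--Etingof isomorphism of completions at a point $b$ with $W_b=\underline{W}$, the parabolic inclusion $B_{\underline{W}}\to B_W$ coming from the local homotopy equivalence on the slice, and Deligne's equivalence, using that $\KZ$ only sees $M[\delta^{-1}]$. The one place where your sketch is too quick is the claim that the Dunkl terms for $s\in S\setminus\underline{W}$ are ``absorbed'' merely because $1/\alpha_s$ is analytic near $b$: an analytic correction to a connection does not vanish, and what one actually shows (and what is built into the Bezrukavnikov--Etingof isomorphism) is that the diagonal part of these terms is the logarithmic derivative of $\prod_{s\in S\setminus\underline{W}}\alpha_s^{2\cc(s)/(1-\lambda_s)}$, which admits a single-valued branch on a simply connected neighborhood of $b$ precisely because those $\alpha_s$ do not vanish there, so the two connections on the slice are gauge-equivalent rather than equal.
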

 We get the induction $^\cH\Ind_{\underline{W}}^W$, and the coinduction $^\cH\CoInd_{\underline{W}}^W$ functors $\cH_q(\underline{W})\modd\to \cH_q(W)\modd$. As explained in \cite{s}, we have the following corollary.
 \begin{cor}\label{indKZ}
 	We have an isomorphism of functors $^\cH\Ind_{\underline{W}}^W\cong$$^\cH\CoInd_{\underline{W}}^W$. The KZ functors intertwine the induction functors.
 \end{cor}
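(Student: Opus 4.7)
Proposal. The two claims are closely linked, and I would derive them together from Proposition \ref{resKZ} (restriction intertwining), Proposition \ref{beadj} (biadjointness of $^\cO\Ind$ with $^\cO\Res$), and the fact that $P_{\KZ}$ is both projective and injective (Proposition \ref{KZproperty}(2)).

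First I would construct the natural maps. Applying $\underline{\KZ}$ to the unit $X \to {}^\cO\Res\,{}^\cO\Ind(X)$ of the \emph{left} half of the biadjunction in Proposition \ref{beadj} and composing with Proposition \ref{resKZ} yields a morphism $\underline{\KZ}(X) \to {}^\cH\Res\,\KZ({}^\cO\Ind X)$. Transposing via $^\cH\Ind \dashv {}^\cH\Res$ gives a natural morphism $\alpha_X : {}^\cH\Ind(\underline{\KZ}(X)) \to \KZ({}^\cO\Ind(X))$. Applying the same recipe to the counit of the \emph{right} half of the biadjunction and transposing via $^\cH\Res \dashv {}^\cH\CoInd$ yields $\beta_X : \KZ({}^\cO\Ind(X)) \to {}^\cH\CoInd(\underline{\KZ}(X))$.

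Next, I would show that $\alpha_X$ is an isomorphism. All functors involved are exact, so it suffices to check on a projective $X \in \cO_\cc(\underline{W})$. For such $X$, ${}^\cO\Ind(X)$ is projective in $\cO_\cc(W)$ (as $^\cO\Ind$ is left adjoint to the exact $^\cO\Res$). By Yoneda it is enough to match $\Hom(-, \KZ Y)$ for arbitrary $Y \in \cO_\cc(W)$, since $\KZ$ is essentially surjective by Proposition \ref{KZproperty}(1). Unwinding:
\begin{align*}
\Hom({}^\cH\Ind\,\underline{\KZ}(X), \KZ Y) &\cong \Hom(\underline{\KZ} X, \underline{\KZ}({}^\cO\Res Y)) \\
&\cong \Hom(X, {}^\cO\Res Y) \\
&\cong \Hom({}^\cO\Ind X, Y) \\
&\cong \Hom(\KZ\,{}^\cO\Ind X, \KZ Y),
\end{align*}
with the successive identifications coming from $^\cH\Ind \dashv {}^\cH\Res$ combined with Proposition \ref{resKZ}, Proposition \ref{KZproperty}(3) applied to the projective $X$, Proposition \ref{beadj}, and Proposition \ref{KZproperty}(3) applied to the projective ${}^\cO\Ind X$. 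The analogous computation with unit and counit exchanged (now using the right half of the biadjunction $^\cO\Res \dashv {}^\cO\Ind$ and $^\cH\Res \dashv {}^\cH\CoInd$) shows that $\beta_X$ is also an isomorphism.

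Combining $\alpha$ and $\beta^{-1}$ gives a natural isomorphism ${}^\cH\Ind \circ \underline{\KZ} \cong {}^\cH\CoInd \circ \underline{\KZ}$. To extract the first claim ${}^\cH\Ind \cong {}^\cH\CoInd$, I would precompose both sides with a right adjoint $\underline{\KZ}^R$ of $\underline{\KZ}$; since $\underline{P}_{\KZ}$ is a finitely generated projective object in the module category of a finite dimensional algebra, $\underline{\KZ}$ is cocontinuous and hence has such a right adjoint, and being a Serre quotient (Proposition \ref{KZproperty}(1)) forces $\underline{\KZ} \circ \underline{\KZ}^R \cong \mathrm{id}$. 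The main obstacle is bookkeeping in the Yoneda step: one must verify that the displayed chain of natural isomorphisms really equals $\Hom(\alpha_X, \KZ Y)$, rather than some ambiguous abstract isomorphism, which requires carefully tracking the specific unit and counit morphisms through several adjunctions.
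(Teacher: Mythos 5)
Your overall strategy -- building comparison morphisms $\alpha$, $\beta$ from the (co)units of the biadjunction, reducing to projectives by exactness, and descending along a section of $\underline{\KZ}$ at the end -- is a reasonable alternative to the argument of \cite{s} that the paper cites, and your last step is exactly the standard one. However, the Yoneda computation contains a genuine gap: the second and fourth isomorphisms in your displayed chain invoke Proposition \ref{KZproperty}(3) with only one of the two arguments projective. Full faithfulness on projectives gives $\Hom(P,Q)\cong\Hom(\KZ P,\KZ Q)$ only when $P$ \emph{and} $Q$ are both projective; the map $\Hom(P,M)\to\Hom(\KZ P,\KZ M)$ for arbitrary $M$ need not even be injective. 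Concretely, if $L_\cc(\tau)$ is a torsion simple (so $\KZ L_\cc(\tau)=0$) and $P_\cc(\tau)$ its projective cover, then $\Hom(P_\cc(\tau),L_\cc(\tau))=\CC$ while $\Hom(\KZ P_\cc(\tau),\KZ L_\cc(\tau))=0$, and $\KZ P_\cc(\tau)\neq 0$ because $P_\cc(\tau)$ surjects onto the torsion-free module $\Delta_\cc(\tau)$. Taking $\underline{W}=W$ already realizes your step $\Hom({}^\cO\Ind X,Y)\cong\Hom(\KZ\,{}^\cO\Ind X,\KZ Y)$ as precisely this false statement, and the step $\Hom(\underline{\KZ}X,\underline{\KZ}({}^\cO\Res Y))\cong\Hom(X,{}^\cO\Res Y)$ fails for the same reason since ${}^\cO\Res Y$ is arbitrary.

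The missing ingredient -- and the substance of the proof in \cite{s} -- is that ${}^\cO\Ind_{\underline{W}}^W$ and ${}^\cO\Res_W^{\underline{W}}$ preserve the torsion subcategories $\cO_{\cc,tor}$, i.e.\ the kernels of the KZ functors (a statement about supports, not a formal consequence of Propositions \ref{beadj}, \ref{resKZ}, \ref{KZproperty}). Granting this, your computation can be repaired: test $\alpha_X$ only against $Z=\KZ Y$ with $Y=S(Z)$ in the image of the section functor $S$ (the right adjoint of $\KZ$, with $\KZ\circ S\cong\mathrm{id}$). Then your fourth isomorphism becomes the adjunction $\KZ\dashv S$, valid for all $X$, and the second holds because ${}^\cO\Res$ carries objects of the form $S(Z)$ to objects lying in the image of $\underline{S}$ -- which is exactly what torsion-preservation by its exact biadjoint ${}^\cO\Ind$ buys you. (Equivalently, one passes adjoints through the isomorphism of Proposition \ref{resKZ} and uses torsion-preservation to identify $\KZ\circ{}^\cO\Ind$ with both ${}^\cH\Ind\circ\underline{\KZ}$ and ${}^\cH\CoInd\circ\underline{\KZ}$, which is how \cite{s} proceeds.) As written, your argument does not close without this input.
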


 \subsection{Categorical Kac-Moody action}\label{km}
 Define the Kac-Moody algebra $\fg_\kappa$ as $\hat{\sln}_e$ if $\kappa$ is rational with denominator $e$ and $\sln_\infty$ if $\kappa$ is irrational. Here $\sln_\infty$ stands for the Kac-Moody algebra of infinite rank associated to the type A Dynkin diagram that is infinite in both directions. When $e=1$, we assume $\fg_\kappa=\{0\}$. Define the Kac-Moody algebra $\fg_{\kappa,\ss}$ as the product of several copies of $\fg_\kappa$, one per equivalence class for $\sim_\cc$ in $\{0,1,\cdots,\ell-1\}$. This algebra has generators $e_z$, $f_z$, where $z$ runs over the subset in $\CC/\kappa^{-1}\ZZ$ of the elements of the form $s_i+m$, where $m$ is an integer. We are going to define an action of $\fg_{\kappa,\ss}$ on $\cF^\ell$, the Fock space of level $\ell$.\par 
 The action is defined as follows. We say that a box $b$ is a $z$-box if $\cont^\ss(b)$ equals to $z$ in $\CC/\kappa^{-1}\ZZ$. We set $f_z|\nu\rangle:=\sum_{\mu}|\mu\rangle$, where the sum is taken over all $\ell$-partitions $\mu$ that are obtained from $\nu$ by adding a $z$-box. Similarly, we set $e_z|\nu\rangle:=\sum_{\lambda}|\lambda\rangle$, where the sum is taken over all $\lambda$ obtained from $\nu$ by removing a $z$-box. We write $\cF_{\kappa,\ss}$ for the space $\cF^{\ell}$ equipped with this $\fg_{\kappa,\ss}$-action. Note that we have a natural isomorphism of $\fg_{\kappa,\ss}$-modules, $\cF_{\kappa,\ss}=\bigotimes_{i=0}^{\ell-1}\cF_{\kappa,s_i}$.\par 
 Now we recall the definition of type A Kac-Moody actions. Let $\cC$ be an abelian $\CC$-linear category, where all objects have finite length. A {\it type A categorical Kac-Moody action} on $\cC$ as defined in \cite[5.3.7, 5.3.8]{r2} consists of the following data:
 \begin{itemize}
 	\item[(1)] exact endo-functors $E$, $F$ of $\cC$ and a number $q\in\CC\setminus\{0,1\}$,
 	\item[(2)] adjointness morphisms $1\to EF$, $FE\to 1$,
 	\item[(3)] endomorphisms $X\in\End(E)$, $T\in\End(E^2)$.  
 \end{itemize}
 These data are supposed to satisfy the axioms to be listed below. We will need the following notation. Let $I$ be a subset in $\CC^\times$. Define a Kac-Moody algebra $\fg_I$ as follows. Define an unoriented graph structure on $I$ by connecting $z$ and $z'$ if $z'z^{-1}=q^{\pm 1}$. Then $\fg_I$ is the Kac-Moody algebra defined from $I$, it is the product of several copies of $\hat{\sln}_e$ if $q$ is a primitive root of unity of order $e$, and is the product of several copies of $\sln_\infty$ otherwise. For example, taking $q=\exp(2\pi\sqrt{-1}\kappa)$ and $I=\{Q_0,Q_1,\cdots,Q_{\ell-1}\}$ where $Q_i:=\exp(2\pi\sqrt{-1}\kappa s_i)$, we get $\fg_I=\fg_{\kappa,s}$. The axioms of a categorical action are as follows.
 \begin{itemize}
 	\item[(i)] $F$ is isomorphic to the left adjoint of $E$.
 	\item[(ii)] For any $d$, the map $X_i\to 1^{i-1}X1^{d-i}$ (where $1^{i-1}X1^{d-i}$ is short for $11\cdots1X11\cdots1\in\End(E^d)$ with $X$ occurring in the $i$-th position and 1 denoting the identity morphism, meaning that $X$ is applied to the $i$-th copy of $E$), $T_i\to 1^{i-1}T1^{d-1-i}$ extends to a homomorphism $\cH_q^{aff}(d)\to \End(E^d)$, where $\cH_q^{aff}(d)$ is the {\it affine Hecke algebra} generated by $X_1,\cdots,X_d$, $X_1^{-1},\cdots,X_d^{-1}$ and $T_1,\cdots,T_{d-1}$ subject to the following relations：
 	\begin{itemize}
 		\item The subalgebra generated by $T_1,\cdots,T_{d-1}$ is isomorphic to $\cH_q(\fS_d)$ defined in \ref{KZ};
 		\item $X_1^\pm,\cdots,X_d^{\pm}$ commute with each other;
 		\item $T_iX_j=X_jT_i$ if $i\neq j, j-1$ and $T_iX_iT_i=qX_{i+1}$.
 	\end{itemize}
 	\item[(iii)] Let $E=\bigoplus_{z\in\CC}E_z$ be the decomposition into eigen-functors according to $X$, and $F=\bigoplus_{z\in\CC}F_z$ be the decomposition coming from (2). The operators $[E_z]$, $[F_z]$ give rise to an integrable representation of $\fg_I$ on $K_0^\CC(\cC)$, where $I:=\{z\in\CC^\times:E_z\neq 0\}$.
 	\item[(iv)] Let $\cC_\nu$ denote the Serre subcategory of $\cC$ spanned by the simples $L$ with $[L]\in K_0^\CC(\cC)_\nu$, where $K_0^\CC(\cC)_\nu$ is the $\nu$ weight space for the $\fg_I$-module $K_0^\CC(\cC)$. Then $\cC=\bigoplus_\nu\cC_\nu$.
 \end{itemize}
 Now let us proceed to constructing a categorical $\fg_{\kappa,\ss}$-action on $\cO_{\kappa,\ss}$ that categorifies the $\fg_{\kappa,\ss}$-action on $\cF_{\kappa,\ss}$, following \cite{s}.\par 
 Set $$E:=\bigoplus_{n=0}^\infty~^\cO\Res_{G(\ell,1,n-1)}^{G(\ell,1,n)}, F:=\bigoplus_{n=0}^\infty ~^\cO\Ind_{G(\ell,1,n+1)}^{G(\ell,1,n)}.$$
 The construction of endomorphisms $X\in \End(E)$, $T\in\End(E^2)$ comes from the following categorification.
 
 There is a categorical $\fg_I$-action on $\cC:=\bigoplus_{n\geq 0}\cH_q^\ss(n)\modd$ (\cite[Section 7.2]{cr}), where $\cH_q^\ss(n)$ is the cyclotomic Hecke algebra, i.e., $\cH_q(W)$ for $W=G(\ell,1,n)$, and $q$ is the eponymous parameter, $I=\{Q_0,\cdots, Q_{\ell-1}\}$ as before. Let $^\cH\Res_{n-1}^n$ denote the restriction functor $\cH_q^\ss(n)\modd\to \cH_q^\ss(n-1)\modd$ (we set $^\cH\Res_{-1}^0=0$) and let $^\cH\CoInd^{n-1}_n$ denote the coinduction functor, the right adjoint of the restriction functor. We set $E:=\bigoplus_{n=0}^\infty$$^\cH\Res_{n-1}^n$ and $F:=\bigoplus_{n=0}^\infty$$^\cH\CoInd_{n+1}^n$. The endomorphism $^\cH X$ on the summand $^\cH\Res_{n-1}^n$ is given by the multiplication by $X_n\in \cH_q^\ss(n)$ and $^\cH T$ on the summand $^\cH\Res_{n-2}^n$ of $E^2=\bigoplus_{n=0}^\infty$ $^\cH\Res_{n-1}^n$ the multiplication by $T_{n-1}$. 
 
 Set the category $\cO_\cc:=\bigoplus_{n\geq 0}\cO_\cc(n)$. Then we have the KZ functor $\KZ_\cc:\cO_\cc\twoheadrightarrow\bigoplus_{n=0}^\infty\cH_q^\ss(n)\modd$, the sum of KZ functors $\cO_\cc(n)\twoheadrightarrow\cH_q^\ss(n)\modd$. By Proposition \ref{resKZ}, $\KZ_\cc$ intertwines both $E$ and $F$. Since $\KZ_\cc$ is fully faithful on the projective objects, it induces an isomorphism $\End(^\cH E)\simeq \End(^\cO E)$ that gives an element $X$ in the right hand side. Similarly we have an isomorphism $\End(^\cH E^2)\simeq \End(^\cO E^2)$ that gives us $T\in \End(^\cO E^2)$.
 
 Note that the eigenvalue decomposition in (iii) is given by $[E_z\Delta_\cc(\nu)]=\sum_\mu[\Delta_\cc(\mu)]$, where the summation is taken over all $\mu$ such that $\mu\subseteq\nu$ and $\nu\setminus\mu$ is a $z$-box. Similarly $[F_z\Delta_c(\nu)]=\sum_ \lambda[\Delta_c(\lambda)]$, where the summation is taken over all $\lambda$ such that $\nu\subseteq\lambda$ and $\lambda\setminus\nu$ is a $z$-box. We further remark here that the categorical $\fg_{\kappa,\ss}$-action on $\cO_\cc$ is compatible with the highest weight structure in the sense of the following lemma (\cite[Lemma 3.3]{Lsurvey}).
 \begin{lemma} \label{hwcompatible}
 	The object $E_z\Delta_\cc(\nu)$ has a filtration by $\Delta_\cc(\mu)$, where $\mu$ runs over all $\ell$-partitions obtained by removing a $z$-box from $\nu$, each $\Delta_\cc(\mu)$ occurs with multiplicity 1. Similarly $F_z\Delta_\cc(\nu)$ has a filtration by $\Delta_\cc(\lambda)$, where $\lambda$ runs over all $\ell$-partitions obtained by adding a $z$-box to $\nu$, each $\Delta_\cc(\lambda)$ occurs with multiplicity 1.
 \end{lemma}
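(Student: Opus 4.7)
The plan is to prove the statement for $E$ first and then handle $F$ in parallel. The starting point is Shan's branching rule for the Bezrukavnikov--Etingof restriction functor applied to Vermas: for any parabolic $\underline{W}\subseteq W$, the object ${}^\cO\Res_W^{\underline{W}}\Delta_\cc(\tau)$ carries a standard filtration in which $\Delta_\cc(\sigma)$ appears with multiplicity $[\Res_W^{\underline{W}}\tau:\sigma]$. For the cyclotomic pair $W=G(\ell,1,n)\supseteq\underline{W}=G(\ell,1,n-1)$ this branching is multiplicity-free, with $\Res^W_{\underline{W}} V_\nu=\bigoplus_\mu V_\mu$ summed over the $\ell$-partitions $\mu$ obtained from $\nu$ by removing a single box. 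Hence $E\Delta_\cc(\nu)$ has a Verma filtration in which each such $\Delta_\cc(\mu)$ occurs exactly once.

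Next I would identify which filtration layer lies in the $z$-eigenspace $E_z$. By construction, $X\in\End({}^\cO E)$ is transported via $\KZ_\cc$ from multiplication by the affine Hecke generator $X_n\in\cH_q^\ss(n)$ on ${}^\cH E$, and Proposition~\ref{resKZ} makes this compatible with the Verma filtration. On the quotient corresponding to removing a box $b=\nu/\mu$, the element $X_n$ acts (up to the cyclotomic normalization) by $\exp(2\pi\sqrt{-1}\kappa\cont^\ss(b))$, since $X_n$ plays the role of a Jucys--Murphy element and its spectrum on standardly filtered modules is indexed by $\ss$-shifted contents of the boxes being restricted. Therefore the $\Delta_\cc(\mu)$-layer lies in the generalized $X$-eigenspace with eigenvalue the class of $\cont^\ss(b)$ in $\CC/\kappa^{-1}\ZZ$; projecting onto $E_z$ retains exactly those $\mu$ obtained by removing a $z$-box, each with multiplicity one.

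For $F$, I would run the parallel argument with induction in place of restriction. The induction analogue of Shan's branching gives ${}^\cO\Ind_{\underline{W}}^W\Delta_\cc(\mu)$ a standard filtration by the Vermas $\Delta_\cc(\lambda)$ indexed by the $\lambda$ obtained from $\mu$ by adding a single box, each once; the eigenvalue of $X$ on the $\Delta_\cc(\lambda)$-layer is again the class of $\cont^\ss(\lambda/\mu)$ in $\CC/\kappa^{-1}\ZZ$. Here I would use Corollary~\ref{indKZ} to pass freely between induction and coinduction, so that the Jucys--Murphy computation transfers from ${}^\cH\Ind$ to ${}^\cO\Ind$ through the KZ functor exactly as in the restriction case.

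The technical heart of the argument is the eigenvalue calculation in the second step: one must verify that the endomorphism $X$, defined abstractly through the $\KZ$ equivalence on projectives, really does act on each Verma layer of $E\Delta_\cc(\nu)$ by a scalar recording the $\ss$-shifted content of the removed box. This amounts to matching the normalization $h_j=\kappa s_j-j/\ell$ with the Hecke parameter $q_{H,j}=\exp(2\pi\sqrt{-1}(h_{H,j}+j/\ell_H))$ from Section~\ref{KZ} and then tracking the Jucys--Murphy action across the restriction. Once this calibration is in place the splitting of the filtration along the generalized eigenspaces of $X$ is automatic, and the remainder of the proof is formal.
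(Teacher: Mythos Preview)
The paper does not prove Lemma~\ref{hwcompatible}; it is stated with a citation to \cite[Lemma~3.3]{Lsurvey} and no further argument is given. Your proposal is therefore not being compared against an in-paper proof, but it is a reasonable reconstruction of how the result is established in the literature, and the overall strategy---Shan's standard-filtration branching rule for ${}^\cO\Res$ applied to Vermas, combined with the Jucys--Murphy eigenvalue computation on the Hecke side transported through $\KZ$---is the right one.

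One point deserves more care: your claim that ``the splitting of the filtration along the generalized eigenspaces of $X$ is automatic.'' It is not quite automatic. You must first know that the endomorphism $X$ of $E\Delta_\cc(\nu)$ preserves the Verma filtration, so that it induces a well-defined action on each subquotient $\Delta_\cc(\mu)$. This follows from highest weight theory: since the subquotients are pairwise non-isomorphic standards, the filtration can be ordered so that $\Hom(\Delta_\cc(\mu_a),\Delta_\cc(\mu_b))=0$ whenever $\Delta_\cc(\mu_a)$ sits below $\Delta_\cc(\mu_b)$; hence any module endomorphism respects each step. Then $\End_{\cO_\cc}(\Delta_\cc(\mu))=\CC$ forces $X$ to act by a scalar on each layer, and your Jucys--Murphy identification (via $\KZ$, using that $\KZ\Delta_\cc(\mu)$ is the nonzero Specht module $S^\mu$) pins down that scalar. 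Only after this does the eigenspace decomposition split the filtration as you want. An alternative that bypasses this step: observe that $E_z\Delta_\cc(\nu)$, being a direct summand of the standardly filtered object $E\Delta_\cc(\nu)$, is itself standardly filtered (a general fact in highest weight categories), and then read off its standard multiplicities from the $K_0$-identity $[E_z\Delta_\cc(\nu)]=\sum_\mu[\Delta_\cc(\mu)]$ already recorded just before the lemma.
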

  
 \section{Supports}\label{support}
 In this section we use the categorical Kac-Moody action recalled in Section \ref{km} to study the supports of modules in $\cO_{\kappa,\ss}$.
  
 \subsection{Possible supports}\label{xpq}
 Set $W=G(\ell,1,n)$. Every object $M\in\cO_\cc(W)$ is finitely generated over $S(\fh^*)=\CC[\fh]$. So we can define the support $\Supp(M)$ of $M$ in $\fh$. This will be the support of $M$ viewed as a coherent sheaf on $\fh$. By definition, this is a closed subvariety in $\fh$.
 
 It turns out that $\Supp(M)$ is the union of the strata of the {\it stabilizer stratification} of $\fh$. The strata are labeled by the conjugacy classes of possible stabilizers for the $W$-action on $\fh$ (these stabilizers are exactly the parabolic subgroups of $W$ by definition). Namely, to a parabolic subgroup $\underline{W}\subset W$ we assign the locally closed subvariety $X(\underline{W}):=\{b\in\fh|W_b=\underline{W}\}$. Note that $\overline{X(\underline{W})}:=\bigsqcup_{W'}X(W')$, where the union is taken over the conjugacy classes of all parabolic subgroups $W'$ containing a conjugate of $\underline{W}$.
 
 Let $e$ denote the denominator of $\kappa$, and $p,q$ non-negative integers satisfying $p+eq\leq n$. Set $W_{p,q}:=G(\ell,1,n-p-eq)\times \fS_e^q$. This is the stabilizer of the point $$(x_1,\cdots,x_p,y_1,\cdots,y_1,\cdots,y_q,\cdots,y_q,0,\cdots,0),$$ and hence parabolic, where $x_1,\cdots,x_p,y_1,\cdots,y_q$ are pairwise different complex numbers and each $y_1,\cdots,y_q$ occurs $e$ times. When $e=1$, we assume that $p=0$. When $\kappa$ is irrational, we take $e=+\infty$ and $q$ is automatically 0. Also note that $\overline{X(W_{p,q})}:=W\fh^{W_{p,q}}\subseteq \fh$.
  
 Clearly, for an exact sequence $0\to M'\to M\to M''\to 0$, we have $\Supp(M)=\Supp(M')\cup\Supp(M'')$. This, in principal, reduces the computation of supports to the case of simple modules. The following result is implicit in \cite[Section 3.8]{be} and explicit in \cite[Section 3.10]{sv}.
 \begin{lemma}\label{besupp}
   Let $L\in\Irr(\cO_{\kappa,\ss}(n))$. Then $\Supp(L)=\overline{X(W_{p,q})}$ for some non-negative integers $p$, $q$ satisfying $p+eq\leq n$.
 \end{lemma}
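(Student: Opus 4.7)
The plan is to combine the Bezrukavnikov--Etingof restriction theory with the classification of parabolic subgroups of $W = G(\ell,1,n)$ and the Berest--Etingof--Ginzburg classification of finite-dimensional modules in type A.

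First, I would recall from \cite[Section 3.8]{be} that $\Supp(L)$ is a $W$-stable closed subvariety of $\fh$, hence a union of stabilizer strata $X(\underline{W})$ for parabolic subgroups $\underline{W} \subseteq W$. The key geometric input is that $\overline{X(\underline{W})} \subseteq \Supp(L)$ if and only if $\Res_W^{\underline{W}}(L) \neq 0$, and when $\underline{W}$ is chosen minimal with this property the module $\Res_W^{\underline{W}}(L) \in \cO_\cc(\underline{W})$ has support concentrated at the origin and is therefore finite-dimensional. Thus the problem reduces to identifying which parabolics $\underline{W}$ admit a finite-dimensional simple in $\cO_\cc(\underline{W})$.

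Next I would enumerate the parabolic subgroups of $G(\ell,1,n)$ via their description as point stabilizers in $\fh = \CC^n$: they take the form $\underline{W} = G(\ell,1,m) \times \fS_{n_1} \times \cdots \times \fS_{n_k}$ with $m + \sum_i n_i = n$, where the $G(\ell,1,m)$-factor stabilizes $m$ zero coordinates and each $\fS_{n_i}$ permutes a $\ZZ_\ell$-orbit of $n_i$ nonzero coordinates. Since $H_\cc(\underline{W})$ and $\cO_\cc(\underline{W})$ factorize as tensor products over these factors, a finite-dimensional simple in $\cO_\cc(\underline{W})$ requires one in each factor. For every $\fS_{n_i}$-factor with parameter $\kappa$, the Berest--Etingof--Ginzburg theorem (\cite{beg}) gives a finite-dimensional simple iff $\kappa = r/n_i$ with $\gcd(r,n_i) = 1$, which with $\kappa$ of denominator $e$ forces $n_i = e$. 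Collecting the $q$ resulting $\fS_e$-factors and setting $p$ equal to the number of remaining unpaired nonzero coordinates (which contribute trivial stabilizers), one obtains $\underline{W} = G(\ell,1,n-p-eq) \times \fS_e^q = W_{p,q}$ with $p + eq \leq n$.

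The main technical obstacle is the first step: arguing that the minimal parabolic $\underline{W}$ with $\Res_W^{\underline{W}}(L) \neq 0$ is unique up to $W$-conjugation, so that $\Supp(L)$ is a single closure $\overline{X(W_{p,q})}$ rather than a union of several. This relies on the simplicity and $W$-equivariance of $L$ together with the sheaf-theoretic/formal-completion description of BE restriction in \cite[Section 3.5]{be}. Once this geometric input is in place, the remainder of the proof is a direct combinatorial reduction via the tensor factorization of Cherednik algebras over parabolic subgroups and the Berest--Etingof--Ginzburg classification in type A.
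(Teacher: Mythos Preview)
Your proposal is sound and tracks the argument that the paper defers to \cite[Section 3.8]{be} and \cite[Section 3.10]{sv}; the paper itself offers no proof beyond these citations. The reduction via the classification of parabolic subgroups of $G(\ell,1,n)$ and the Berest--Etingof--Ginzburg constraint forcing each nontrivial $\fS_{n_i}$-factor to have $n_i=e$ is exactly the mechanism at work in those references.

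The one place where your sketch is thin is the uniqueness step, and the phrase ``simplicity and $W$-equivariance together with the formal-completion description'' undersells what is actually needed. Simplicity and equivariance alone do not rule out $\Supp(L)=\overline{X(W_{p,q})}\cup\overline{X(W_{p',q'})}$ for distinct incomparable pairs. The clean argument goes through the \emph{induction} functor: if $X(\underline{W})$ is open in $\Supp(L)$ then $M:=\Res_W^{\underline{W}}L$ is nonzero and finite-dimensional, so by the biadjunction of Proposition~\ref{beadj} there is a nonzero map $L\to\Ind_{\underline{W}}^W M$ (or dually), and since $L$ is simple it embeds as a subquotient. A direct computation from the construction in \cite{be} gives $\Supp(\Ind_{\underline{W}}^W N)=\overline{X(\underline{W})}$ for any finite-dimensional $N\in\cO_\cc(\underline{W})$, whence $\Supp(L)\subseteq\overline{X(\underline{W})}$ and therefore equality. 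This is what pins the support down to a single stratum closure; it uses the Res/Ind adjunction in an essential way rather than a purely local sheaf argument.
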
  
 For a Weil generic aspherical parameters $\cc$ on the hyperplane $s_i-s_j=m+\frac{t}{\kappa}$, $\kappa$ is irrational. So $q=0$ and $p\leq n$. Furthermore, $\overline{X(W_{p,0})}$ is exactly the subset of $\fh$ consisting of those points with at most $p$ coordinates being nonzero. So $\dim\overline{X(W_{p,0})}=p$. 
  
 In the remainder of this chapter, for $\nu\in\cP_\ell$, we give a technique to compute $p_{\kappa,\ss}(\nu)$ such that $\Supp(L_{\kappa,\ss}(\nu))=\overline{X(W_{p_{\kappa,\ss}(\nu),0})}$, following \cite{Lcrystals}.
 \subsection{Crystals}\label{crystal}
 Let us recall the definition of a crystal corresponding to a categorical $\fg_I$-action. The crystal structure will be defined for each $z\in I$ separately so we will get $I$ copies of an $\sln_2$-crystal. By an $\sln_2$-crystal we mean a set $C$ with maps $\tilde{e},\tilde{f}:C\to C\sqcup\{0\}$ such that 
 \begin{itemize}
 	\item For any $v\in C$ there are integers $m$ and $n$ such that $\tilde{e}^nv=\tilde{f}^mv=0$;
 	\item Moreover, for $u,v\in C$, the equalities $\tilde{e}u=v$ and $\tilde{f}v=u$ are equivalent.
 \end{itemize} 
 Now let $\cC$ be an abelian category equipped with a $\fg_I$-action. We will introduce a $\fg_I$-crystal structure on the set $\Irr(\cC)$, following \cite{cr}. Namely, pick $L\in \Irr(\cC)$ and consider the object $E_zL$. If it is nonzero, it has a simple head (i.e., the maximal semisimple quotient) and a simple socle (i.e., the maximal semisimple subobject) and those two are isomorphic, according to \cite[Proposition 5.20]{cr}. We take that simple object as $\tilde{e}_zL$ if $E_zL\neq 0$. And we set $\tilde{e}_zL=0$ if $E_zL=0$. $\tilde{f}_zL$ is defined similarly. That we get a $\fg_I$-crystal follows from \cite[Proposition 5.20]{cr} combined with \cite[Section 5]{bk}.
  
 Now let us explain how to compute the crystal on $\cP_\ell=\Irr(\cO_\cc)$, following \cite{Lsurvey}. In order to compute $\tilde{e}_z\nu$, $\tilde{f}_z\nu$, we first record the {\it $z$-signature} of $\nu$ that is a sequence of $+$'s and $-$'s. Then we perform a certain reduction procedure getting what we call the {\it reduced $z$-signature}. Based on that signature, we can then compute $\tilde{e}_z\nu$, $\tilde{f}_z\nu$. What we get is a crystal on $\cP_\ell$ very similar to what was discovered by Uglov in \cite{u}.
  
 To construct the $z$-signature of $\nu$, we take the addable and removable $z$-boxes of $\nu$, order them in a decreasing way according to the order $\preceq_\cc$ introduced in Section \ref{hw}, and write a $+$ for each addable box and a $-$ for each removable box. To reduce the signature, we erase consecutive $-+$ leaving empty spaces and continue until there is no $-$'s to the left of a $+$. It is easy to see that the reduced signature does not depend on the order in which we perform the reduction. Finally to obtain $\tilde{e}_z\nu$ (resp. $\tilde{f}_z\nu$), we pick the leftmost $-$ (rightmost $+$) in the reduced $z$-signature of $\nu$ and remove (add) the box in the corresponding position of $\nu$. Here is an example.
 
 \begin{example}
 	Suppose $\ell=2$, $\kappa<0$ irrational, $s_0=0, s_1=1$. We take $z=0$ and $\nu^{(0)}=(2,2), \nu^{(1)}=(2)$. We get boxes $b_1$ (removable) and $b_2$ (addable), with $b_2\preceq_\cc b_1$.
 	
 	\setlength{\unitlength}{1mm}
 	\begin{picture}(65,18)
 	\put(2,1){\line(0,1){14}}
 	\put(9,1){\line(0,1){14}}
 	\put(16,1){\line(0,1){14}}
 	\put(2,1){\line(1,0){14}}
 	\put(2,8){\line(1,0){14}}
 	\put(2,15){\line(1,0){14}}
 	\put(42,1){\line(0,1){7}}
 	\put(49,1){\line(0,1){7}}
 	\put(42,1){\line(1,0){14}}
 	\put(42,8){\line(1,0){14}}
 	\put(56,1){\line(0,1){7}}
 	\put(11,10){$b_1$}
 	\put(44,10){$b_2$}
 	\end{picture}
 	
 	The $z$-signature we get is $-+$ and is reduced to empty. So $\tilde{e}_z\nu=0$.
 \end{example}
  
 The $\fg_{\kappa,\ss}$-crystal $\cP_{\kappa,\ss}$ is highest weight, i.e., for every $\nu\in\cP_{\kappa,\ss}$, there is $k\in\ZZ_{\geq 0}$ such that $\tilde{e}_{z_1}\cdots\tilde{e}_{z_k}\nu=0$ for any $z_1,\cdots,z_k\in I$. We define the {\it depth} of $\nu$ as $k-1$ for the minimal such $k$. The following result was obtained in \cite[Section 5.5]{Lcrystals}.
 \begin{prop}\label{p}
 	The number $p_{\kappa,\ss}(\nu)$ coincides with the depth of $\nu$ in $\cP_{\kappa,\ss}$
 \end{prop}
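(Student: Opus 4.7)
The plan is to identify $p_{\kappa,\ss}(\nu)$, which is the dimension of $\Supp L_{\kappa,\ss}(\nu)$ (using that for Weil generic aspherical $\cc$ one has $q=0$ in Lemma \ref{besupp}), with the crystal depth, by combining the Bezrukavnikov--Etingof support criterion with the categorical Kac-Moody action of Section \ref{km}. The key input, implicit in \cite{be} and \cite{sv}, is that for any simple $L \in \cO_\cc(W)$ one has $\Supp L \supseteq W\fh^{\underline{W}}$ if and only if $\Res_W^{\underline{W}} L \neq 0$. Applied to the parabolic $\underline{W} = W_{p,0} = G(\ell,1,n-p)$, whose fixed locus $W\fh^{W_{p,0}} = \overline{X(W_{p,0})}$ is the $p$-dimensional stratum of interest, this rewrites $p_{\kappa,\ss}(\nu)$ as the maximal $p$ for which $\Res_W^{W_{p,0}} L_{\kappa,\ss}(\nu) \neq 0$.

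Next I would invoke transitivity of the Bezrukavnikov--Etingof restriction, so that $\Res_W^{W_{p,0}}$ factors as the $p$-fold composition of one-step restrictions $\Res^{G(\ell,1,n-i-1)}_{G(\ell,1,n-i)}$. Each one-step restriction is the categorical $E$ of Section \ref{km} at the appropriate level, and by construction $E = \bigoplus_z E_z$ decomposes into weight summands. It follows that $p_{\kappa,\ss}(\nu)$ equals the largest $p$ for which there exist $z_1, \dots, z_p \in I$ with $E_{z_1} \cdots E_{z_p} L_{\kappa,\ss}(\nu) \neq 0$.

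It remains to match this number with the depth of $\nu$ in $\cP_{\kappa,\ss}$. One inequality is straightforward: by the definition of the crystal in Section \ref{crystal}, whenever $\tilde{e}_z M$ is defined it is a nonzero simple constituent (in fact the head) of $E_z M$, so a nonzero chain $\tilde{e}_{z_1} \cdots \tilde{e}_{z_p} \nu \neq 0$ produces inductively a nonzero simple constituent of $E_{z_1} \cdots E_{z_p} L_{\kappa,\ss}(\nu)$, giving $\mathrm{depth}(\nu) \leq p_{\kappa,\ss}(\nu)$. The reverse inequality is the main obstacle: given a nonzero $E_{z_1} \cdots E_{z_p} L_{\kappa,\ss}(\nu)$, one must produce a nonzero $\tilde{e}$-chain of length $p$ starting from $\nu$, possibly with different labels. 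For this I would appeal to the Chuang--Rouquier crystal formalism (\cite[Section 5]{cr}), which gives that every simple constituent of $E_z M$ for simple $M$ is reachable from $M$ by an iterated crystal operator chain; an induction on $p$ then reduces the existence of a nonzero $E$-chain of length $p$ to the existence of a nonzero crystal chain of the same length, completing the argument as in \cite[Section 5.5]{Lcrystals}.
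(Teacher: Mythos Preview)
The paper does not supply its own proof of this proposition; it is quoted directly from \cite[Section 5.5]{Lcrystals}. Your outline correctly reconstructs the overall strategy --- translate $p_{\kappa,\ss}(\nu)$ into the largest $p$ with $E^pL_{\kappa,\ss}(\nu)\neq 0$ via the Bezrukavnikov--Etingof support criterion and transitivity of $\Res$, then compare with the crystal depth --- and the inequality $\mathrm{depth}(\nu)\le p_{\kappa,\ss}(\nu)$ is indeed immediate from the definition of $\tilde{e}_z$ as the head of $E_z$.

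The gap is in your treatment of the reverse inequality. The statement you attribute to Chuang--Rouquier, that every simple constituent of $E_zM$ for simple $M$ is reachable from $M$ by an iterated $\tilde{e}$-chain, is not in \cite[Section 5]{cr}: their Proposition~5.20 controls only the head and socle of $E_zM$, not its middle composition factors, and in a general $\sln_2$-categorification there is no reason a non-head constituent of $E_zL$ should even lie in the same crystal component as $L$. Your induction therefore does not go through as written: from $E^{p-1}E_zL\neq 0$ you only know that \emph{some} constituent $L'$ of $E_zL$ has $E^{p-1}L'\neq 0$, and you cannot conclude $L'=\tilde{e}_zL$ nor relate $\mathrm{depth}(L')$ to $\mathrm{depth}(L)$ from \cite{cr} alone. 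What \cite{Lcrystals} actually uses to close this gap is the \emph{highest weight} structure of the categorification (cf.\ Lemma~\ref{hwcompatible} here): the compatibility of $E_z$ with standard filtrations and the order $\preceq_\cc$ constrains the constituents of $E_zL_\cc(\nu)$ enough to run the argument. Since you end by deferring to \cite[Section 5.5]{Lcrystals} anyway, your proposal amounts to the same citation the paper makes, but with an incorrect intermediate claim inserted; either drop the appeal to \cite{cr} for this step and cite \cite{Lcrystals} outright, or reproduce the highest-weight argument from there.
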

  
 \subsection{Singular irreducible modules}\label{singular}
 In this section, we find all possible finite dimensional irreducible modules $L(\nu)$ over $H_\cc$ for Weil generic aspherical $\cc$. We remark that the same result has been obtained in \cite[Corollary 8.1]{gr}, \cite[Lemma 3.3]{dg} and \cite[Example 3.4, 3.5]{Lsurvey}. 
 
 Singular simples $L(\nu)$ correspond to singular elements $\nu\in\cP_\ell$, i.e., $\tilde{e}_z\nu=0$ for any $z\in \CC/\kappa^{-1}\ZZ$. Note that $\dim\Supp(L(\nu))=0$ by the discussion after Lemma \ref{besupp}. In terms of the $(\kappa,\ss)$-parameter, we consider the aspherical hyperplanes of the form 
 $$ s_i-s_j=m+\frac{t}{\kappa}, $$
 where $m, t\in\ZZ$ and we assume $i<j$. Now we do the computation in four cases.
 
 \textbf{Case 1:} $m\geq 0, t=0$.
 
 We start with the simplest case where the hyperplane is just $s_i-s_j=m\geq 0$. If $|\nu|\neq 0$, there exists at least one removable box in $\nu$. Given that $\tilde{e}_z\nu=0$ for any $z\in \CC/\kappa^{-1}\ZZ$, there have to be at least equally many addable $z$-boxes as removable ones and the addable $z$-boxes have to be smaller than removable ones with respect to $\preceq_\cc$ so that the $-+$ can be reduced.
 
 Since $\kappa\notin \QQ$, $b\sim b'$ for two (possibly addable and not in $\nu$) boxes $b$ and $b'$ of $\nu$ if and only if 
 \begin{itemize}
 	\item one of them lies in the component $\nu^{(i)}$ and the other in $\nu^{(j)}$, to be denoted respectively $b_i$ and $b_j$, and 
 	\item $\cont^\ss(b_i)=\cont^\ss(b_j)$, i.e., $x(b_i)-y(b_i)+m=x(b_j)-y(b_j)$, where $x$ and $y$, as in \ref{hw}, denotes respectively the number of column and row of the box.
 \end{itemize}
 
 Recall that $\cc_{b_i}=\kappa\ell(\cont^\ss(b_i))-i$ and $\cc_{b_j}=\kappa\ell(\cont^\ss(b_j))-j$ with $i<j$. So $\cc_{b_i}>\cc_{b_j}$. In the $s_j$-signature, $b_j$ comes to the left of $b_i$. To produce a $-+$, $b_j$ is forced to be the removable box and $b_i$ addable. When $\kappa\notin \QQ$, $\tilde{e}_z\nu=0$ for any $z\neq s_j \mod \kappa^{-1}\ZZ$ implies that there is no removable boxes other than $b_j$. So any component in $\nu$ other than $\nu^{(j)}$ is empty. In particular, $\nu^{(i)}=\emptyset$ and $x(b_i)-y(b_i)=0$. Then $x(b_j)-y(b_j)=m$. So $\nu^{(j)}$ is a rectangle with $r$ rows ($r\geq 1$) and $r+m$ columns.
  
 In summary, if the parameter $\cc$ is Weil generic on the hyperplane $s_i-s_j=m$ with $m\geq 0$, the singular $\nu\in\cP_\ell$ satisfies $\nu^{(j)}=((r+m)^r)$ with $r\geq 1$, and $\nu^{(j')}=\emptyset$ for any $j'\neq j$.
 
 \textbf{Case 2:} $m<0, t=0$.
 
 For the case $s_i-s_j=m<0$, the argument goes almost the same as in Case 1. The only difference is that now $r+m<r$, so we require $r+m\geq 1$. The conclusion is also similar to that in Case 1, only with $r\geq 1$ replaced by $r+m\geq 1$. After relabeling, one can write $\nu^{(j)}=(r^{r+|m|})$ with $r\geq 1$, and $\nu^{(j')}=\emptyset$ for any $j'\neq j$.
 
 \textbf{Case 3:} $t>0$.
 
 The argument that $b\sim b'$ implies $b=b_i$ and $b'=b_j$ is still true. A minor difference is that now we require that $\cont^\ss(b_i)=\cont^\ss(b_j)\mod \kappa^{-1}\ZZ$. However, this still gives us $x(b_i)-y(b_i)+m=x(b_j)-y(b_j)$. A more significant difference is that when $t\neq 0$,
 $$ \cc_{b_i}-\cc_{b_j}=\kappa\ell(s_i-s_j-m)-(i-j)=t\ell-(i-j). $$
 Given $t>0$ and $-\ell<i-j<0$, we see that $\cc_{b_i}>\cc_{b_j}$. So the result coincides with that of Case 1 if $m\geq 0$, and with Case 2 if $m<0$.
 
 {\textbf Case 4:} $t<0$. 
 
 This case is similar to Case 3 and the only difference is that when $t<0$, $\cc_{b_i}<\cc_{b_j}$ and hence $b_i$ is the only removable box in $\nu$. So in this case $\nu^{(i)}=((r+m)^r)$ if $m\geq 0$ and $(r^{r+|m|})$ if $m<0$ with $r\geq 1$, and $\nu^{(i')}=\emptyset$ for any $i'\neq i$.

 \subsection{Description of supports}\label{supp}
 Based on the computation in \ref{singular}, we can give a more explicit description of the depth of any element in $\cP_\ell$, and hence the supports of simples in $\cO_\cc$, still under the assumption that $\cc$ is Weil generic on the hyperplane $s_i-s_j=m+\frac{t}{\kappa}$. 
 
 \begin{prop}\label{n-rec}
  	For any $\nu\in\cP_{\kappa,\ss}(n)$ with $\cc=(\kappa,\ss)$ Weil generic on the hyperplane $s_i-s_j=m+\frac{t}{\kappa}$, the depth 
 	$$ p(\nu)=n-r(r+|m|) $$
 	for some $r\in\ZZ$ and $0\leq r\leq \lfloor \sqrt{n+\frac{1}{4}m^2}-\frac{1}{2}|m| \rfloor$
 \end{prop}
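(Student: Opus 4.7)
The plan is to combine Proposition \ref{p}, which identifies $p(\nu)$ with the depth of $\nu$ in the $\fg_{\kappa,\ss}$-crystal $\cP_{\kappa,\ss}$, with the classification of singular elements obtained in Section \ref{singular}.

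First, I would unpack the definition of depth from Section \ref{crystal}. Each crystal operator $\tilde{e}_z$, when nonzero, removes a single $z$-box and therefore strictly decreases $|\cdot|$ by one. Hence any chain $\nu = \nu_0, \nu_1, \ldots, \nu_d$ with each $\nu_{k+1} = \tilde{e}_{z_{k+1}} \nu_k$ nonzero satisfies $|\nu_d| = n - d$, and can be extended by some choice of $z_{d+1}$ if and only if $\nu_d$ is not singular; so any maximal such chain terminates at a singular element. Unwinding the definition, the depth of $\nu$ equals the maximum length $d$ of such a chain: at $k = d+1$ every $k$-fold application of $\tilde{e}$-operators to $\nu$ vanishes (otherwise we would exhibit a nonzero chain of length $d+1$), while at $k = d$ the maximal chain witnesses a nonzero product. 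Hence $p(\nu) = n - |\nu_d|$, where $\nu_d$ is the singular element terminating any maximal chain starting at $\nu$.

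Next, I would invoke the case-by-case classification from Section \ref{singular}. For $\cc$ Weil generic on the hyperplane $s_i - s_j = m + t/\kappa$, every non-empty singular $\mu \in \cP_{\kappa,\ss}$ was shown to be concentrated in a single component (either $\mu^{(i)}$ or $\mu^{(j)}$) as a rectangle of shape $((r+|m|)^r)$ or $(r^{r+|m|})$ for some $r \ge 1$. In all of Cases 1--4 this forces $|\mu| = r(r+|m|)$, and the empty partition (also singular) fits the same formula with $r = 0$. Applied to $\mu = \nu_d$, this yields $p(\nu) = n - r(r+|m|)$ for some integer $r \ge 0$; note that since $r \mapsto r(r+|m|)$ is strictly increasing on $\ZZ_{\ge 0}$, this $r$ is uniquely determined by $\nu$.

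Finally, the constraint $r(r+|m|) = |\nu_d| \le n$ rearranges to $r^2 + |m|r - n \le 0$, equivalent for $r \ge 0$ to $r \le (-|m| + \sqrt{m^2 + 4n})/2 = \sqrt{n + \frac{1}{4}m^2} - \frac{1}{2}|m|$. Since $r$ is a nonnegative integer, this gives the required upper bound $r \le \lfloor \sqrt{n + \frac{1}{4}m^2} - \frac{1}{2}|m| \rfloor$. The one step requiring genuine care is the identification of the depth with the maximum length of a nonzero chain of $\tilde{e}_z$-applications; the rest is a direct substitution using Section \ref{singular}, so no deeper obstacle is anticipated.
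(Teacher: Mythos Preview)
Your proposal is correct and follows essentially the same approach as the paper: both arguments reduce the computation of $p(\nu)$ to the size of the singular element reached at the end of a maximal chain of $\tilde{e}_z$-operators, then invoke the classification from Section~\ref{singular} to conclude that this size is $r(r+|m|)$ and derive the bound on $r$ from $r(r+|m|)\le n$.

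The differences are minor and stylistic. The paper first reduces to Case~1 ($t=0$, $m\ge 0$) and argues the others are analogous, whereas you treat all four cases uniformly via the observation that in each case the singular rectangle has $r(r+|m|)$ boxes. The paper also records an extra fact---that crystal operators commute up to permutation whenever the result is nonzero, so any order of removal leads to the \emph{same} $\nu_r$---which is not needed for this proposition (you only need the existence of some singular terminus of the correct size) but is used immediately afterwards in the proof of Theorem~\ref{conclusion}. Your handling of the depth--chain identification is, if anything, more explicit than the paper's.
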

 \begin{proof}
 	First of all, we claim that it is sufficient to prove the statement for Case 1 in \ref{singular}, i.e., when the hyperplane is $s_i-s_j=m$ with $m\in\ZZ_{\geq 0}$. The arguments for other cases are analogous, with either some Young diagram transposed or the indexes $i$ and $j$ switched. Note that in Case 2, when transposing the Young diagram (i.e., $m<0$),  one ends up with the absolute value $|m|$ in the statement.
 	
 	We denote the singular element in \ref{singular} Case 1 with $j$-th component $((r+m)^r)$ and other components $\emptyset$ by $\nu_r$.
 	
 	Consider a partition $\nu$ with $\nu^{(j)}=\emptyset$. For any box $b\in\nu$, there is no box (including addable boxes) $b'\sim b$. So applying $\tilde{e}_z$ for all possible values of $z\in\CC/\kappa^{-1}\ZZ$, one can remove all $n$ boxes in $\nu$ one by one. Hence $p=n$, corresponding to $r=0$ in the proposition.
 	
 	An element $\nu$ has depth less than $n$ if and only if one comes across one of the $\nu_r$'s while removing boxes, no matter in what order the $\tilde{e}_z$'s for different $z$'s are applied. By properties of crystals, if $\tilde{e}_{z_1}\cdots\tilde{e}_{z_k}\nu=\nu_r$ for some sequence of $z_1,\cdots,z_k$ and for some $r\geq 1$, then for any $\sigma\in\fS_k$, $\tilde{e}_{z_{\sigma(1)}}\cdots\tilde{e}_{z_{\sigma(k)}}\nu=\nu_r$ given that $\tilde{e}_{z_{\sigma(1)}}\cdots\tilde{e}_{z_{\sigma(k)}}\nu \neq 0$. So the order makes no difference. Since $|\nu_r|=(r+m)r$, $p(\nu)=n-(r+m)r$. Finally $\nu_r\subseteq \nu$ implies $(r+m)r\leq n$, which gives $r\leq \sqrt{n+\frac{1}{4}m^2}-\frac{1}{2}m$.
 \end{proof} 	
 
 Now our goal to find the depth of a given $\nu\in\cP_{\kappa,\ss}$ reduces to find the rectangle $\nu_r$ such that $\nu$ goes to $\nu_r$ after a sequence of crystal operators $\tilde{e}$. As explained in the proof of Proposition \ref{n-rec}, the crystal operators commute under some conditions. Therefore when $\ss$ lies on $s_i-s_j=m>0$, we can first remove boxes one-by-one in the $j$-th component until each removable box has an addable box in the $i$-th component equivalent to it with respect to $\sim$. Then we can remove one box from the $i$-th component and repeat the previous step. This procedure can continue until there is no removable box in the $i$-th component, i.e., the $i$-th component is empty, and there is a rectangle left in the $j$-th component, with the number of rows equal to $r$, which is exactly $\nu_r$. 
 
 To find this $r$, for each addable box $b$ to $\nu^{(i)}$, find the right-top $b'\in\nu^{(j)}$ (not necessarily removable) such that $b'\sim b$, and assign the number $r_b:=\max\{y(b')-y(b)+1,0\}$, where $y$ denotes the number of the row containing the corresponding box. If there is no such $b'$, assign $r_b=0$. This $r_b$ indicates how many rows are left in $\nu^{(j)}$ when there is no box in the $i$-th component equivalent to $b$ and each removable box in the $j$-th component has an equivalent addable box in the $i-$th component. Since we stop once we hit some $\nu_r$, this $r$ should be the maximum among those $r_b$. So far we have proved the following result.
 \begin{theorem}\label{conclusion}
 	 	Under the notation above, $$r=\max\{r_b:b\text{ runs over all addable boxes to }\nu^{(i)}\}$$ and $p(\nu)=n-r(r+|m|)$.
 \end{theorem}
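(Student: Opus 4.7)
The plan is to formalize the alternating-removal procedure described in the paragraph preceding the statement and verify that its terminal shape is exactly $\nu_r$ with $r=\max_b r_b$. First I would reduce to Case~1 of Section~\ref{singular}, i.e. the hyperplane $s_i-s_j=m$ with $m\ge 0$; the remaining cases follow by transposing Young diagrams or swapping $i$ and $j$, which is precisely why $|m|$ appears in the formula (compare the reduction in the proof of Proposition~\ref{n-rec}). The fundamental tool is the commutation property of crystal operators in an $\fg_I$-crystal: any two sequences of $\tilde{e}_z$'s taking $\nu$ to a common object through nonzero intermediate states produce the same result. This is what lets us compute the depth by choosing any convenient order of application.

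Next I would implement the loop. Under $s_i-s_j=m$ with $\kappa$ irrational, the only cross-component pairings under $\sim$ occur between $\nu^{(j)}$ and $\nu^{(i)}$, and the paper's computation $\cc_{b_j}<\cc_{b_i}$ for equivalent pairs $b_j\in\nu^{(j)}$, $b_i\in\nu^{(i)}$ shows that $\nu^{(j)}$-entries come to the left of their $\nu^{(i)}$-partners in each $z$-signature. Consequently, a removable $z$-box $b_j\in\nu^{(j)}$ is cancelled in the reduced $z$-signature exactly when it is matched by an addable $z$-box of the current $\nu^{(i)}$, while removable boxes of $\nu^{(i)}$ are never cancelled by anything in $\nu^{(j)}$. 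The loop then runs: (a) apply $\tilde{e}_z$ to strip off every removable box of $\nu^{(j)}$ whose paired addable corner of $\nu^{(i)}$ is absent, and (b) apply a single $\tilde{e}_z$ removing a removable box of $\nu^{(i)}$, which liberates new addable corners; return to (a). Boxes in any component $k\ne i,j$ can be removed one at a time in parallel since irrational $\kappa$ forbids any cross-component $\sim$ involving them. The process halts with $\nu^{(i)}=\emptyset$ and $\nu^{(j)}$ reduced to a shape whose every removable box is paired with the unique addable corner $(1,1)$ of the empty partition; this forces the survivor to live entirely on the diagonal $x-y=m$, hence $\nu^{(j)}=((r+m)^r)=\nu_r^{(j)}$.

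Finally I would identify $r$ with $\max_b r_b$ taken over the addable corners of the original $\nu^{(i)}$. The point is that each such $b$ ``protects'' certain rows of $\nu^{(j)}$: as long as $b$ survives as an addable corner during stage (b), the deepest box $b'\in\nu^{(j)}$ with $b'\sim b$ stays uncancellable, and consequently all rows of $\nu^{(j)}$ at or below $b'$ on the shared diagonal remain intact. The count $r_b=\max\{y(b')-y(b)+1,0\}$ is precisely the number of rows thereby preserved, and the maximum over $b$ gives the row count of the terminal rectangle. Combined with Proposition~\ref{n-rec}, this yields $p(\nu)=n-r(r+|m|)$. The main obstacle will be making this last identification fully rigorous: one needs an inductive bookkeeping of how the addable corners of $\nu^{(i)}$ evolve during stage (b) and a careful check that the ``right-top'' $b'$ in the statement is indeed the box paired with $b$ at termination, rather than some other box on the shared diagonal, so that the combinatorial quantity $r_b$ matches the row count read off from the procedure.
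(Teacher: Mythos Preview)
Your proposal is correct and follows essentially the same approach as the paper: the paper's proof is precisely the two paragraphs preceding the theorem (ending with ``So far we have proved the following result''), and it runs the same alternating-removal loop---strip $\nu^{(j)}$ until blocked by matching addable corners in $\nu^{(i)}$, remove one box from $\nu^{(i)}$, repeat---and identifies the terminal rectangle's row count with $\max_b r_b$ for the same reason you give. Your acknowledged ``main obstacle'' about the rigorous bookkeeping of $r_b$ is equally present in the paper, which simply asserts that ``$r_b$ indicates how many rows are left'' without further justification; so your level of rigor already matches the source.
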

 
 More generally, 
 Theorem \ref{conclusion} holds 
 for any parameter $\cc$ Weil generic on the hyperplane $s_i-s_j=m+\frac{t}{\kappa}$ with $i<j$, except if $t<0$. In the latter case, one needs to switch $i$ and $j$ for the whole argument.
 
\section{Two-sided Ideals of quantized quiver varieties}\label{ideal}
In this section, we consider the quantized quiver variety $\cA_\lambda^\theta(v,w)$ and its algebra $\cA_\lambda(v,w)$ of global sections, and the restriction functor $\bullet_{\dagger,x}$ constructed in \cite[Section 3.3]{bl}. In particular, we apply these techniques to obtain descriptions of two-sided ideals in $\cD_\lambda(\Gr(v,w))$, the algebra of twisted differential operators on the grassmannian variety, and in $eH_\cc e$, the spherical subalgebra of RCA. The two algebras can both be realized as $\cA_\lambda(v,w)$ for some $(Q,v,w)$ and $\lambda$.
 \subsection{Harish-Chandra bimodules and Restriction functors}\label{HC}
 In this section, we recall some preliminaries on Harish-Chandra (HC, for short) bimodules over quantized quiver varieties, and the construction of restriction functor $\bullet_{\dagger,x}$, following \cite[Section 3]{bl}.
 
 Analogously to the HC bimodules over the universal enveloping algebras of semisimple Lie algebras, one can define HC bimodules over arbitrary almost commutative filtered algebras, (see \cite{Lw,Lcompletion,gi,bpw} for example). Let $\cA=\bigcup_{i\geq 0}\cA^{\leq i}$, $\cA'=\bigcup_{i\geq 0}\cA'^{\leq i}$ be $\ZZ_{\geq 0}$-filtered algebras with the Lie bracket having degree $-1$, such that their associated graded $\gr\cA$, $\gr\cA'$ are identified with the same $A$, which is a finitely generated commutative graded Poisson algebra. We call an $\cA'$-$\cA$-bimodule $\cB$ {\it Harish-Chandra (HC)} if it can be equipped with a bimodule $\ZZ$-filtration bounded from below $\cB=\bigcup_i\cB^{\leq i}$, such that $\gr\cB$ is a finitely generated $A$-module (meaning, in particular, that the left and right actions of $A$ coincide). Such a filtration on $\cB$ is called {\it good}. We remark that every HC bimodule is finitely generated both as a left and as a right module, and that the support of $\gr\cB$ in $\Spec{A}$, which is called the {\it associated variety} of $\cB$ and denoted by $V(\cB)$, is independent of the choice of filtrations on $\cB$. Note that $V(\cB)$ is a Poisson subvariety of $\Spec{A}$.

 We will take $\cA=\cA'=\cA_\lambda(v,w)$  (or its universal version $\cA_\fP(v,w)$, to be defined below), where the filtration on $\cA$ is induced from the filtration on $D(R)$ by the order of differential operators. Then $A:=\CC[\cM(v,w)]$ (or $\CC[\cM_\fp(v,w)]$, to be defined below), so that $\gr\cA=\gr\cA'=A$. 

 In order to introduce the functor $\bullet_{\dagger,x}$, we need a decomposition of the formal neighborhood of the completion $\cA_\fP(v,w)^{\wedge_x}_\hbar$. We first consider the decomposition in the classical level.
 
 Set $\fp:=\CC^{Q_0}\simeq(\fg^*)^G$ to be the parameter space of Hamiltonian reductions and consider the families of quiver varieties $\cM_\fp^0(v,w):=\mu^{-1}(\fg^{*G})//G$, $\cM_\fp^\theta(v,w):=\mu^{-1}(\fg^{*G})^{\theta-ss}//G$ and $\cM_\fp(v,w):=\Spec(\CC[\cM_\fp^\theta(v,w)])$. Pick a point $x\in \cM_\fp^0(v,w)$. The following descriptions of the formal neighborhood $\cM_\fp^0(v,w)^{\wedge_x}$ and of the scheme $\cM_\fp^\theta (v,w)^{\wedge_x}:=\cM_\fp^0(v,w)^{\wedge _x}\times_{\cM_\fp^0(v,w)}\cM_\fp^\theta(v,w)$ are due to Nakajima, \cite[Section 6]{n1}.
 
 Let $r\in T^*R$ be a point with closed $G$-orbit mapping to $x$. Then $r$ is a semisimple representation of the double quiver $\overline{Q^w}$, where the quiver $Q^w$ is obtained by adjoining a vertex $\infty$ to $Q$ and connecting each vertex $i\in Q_0$ to $\infty$ with $w_i$ arrows. Then $\overline{Q^w}$ is obtained by adding an opposite arrow to each existing arrow of $Q^w$. The dimension of $r$ as a representation of $\overline{Q^w}$ is $(v,1)$. So $r$ can be decomposed into the sum $r=r_0\oplus r_1\otimes U_1\oplus\cdots \oplus r_k\otimes U_k$, where $r_0$ is the irreducible representation with dimension vector $(v^0,1)$, $r_1,\cdots,r_k$ are pair-wise non-isomorphic irreducible representations with dimensions $(v^i,0)$ where $i=1,\cdots,k$, and $U_i$ is the multiplicity space of $r_i$. In particular, the stabilizer $G_r$ of $r$ is $\Pi_{i=1}^kGL(U_i)$.
  
 We define a new framed quiver $(\hat{Q},\hat{v},\hat{w})$ following \cite[2.1.6]{bl}. The set of vertices $\hat{Q}_0$ is taken as $\{1,\cdots,k\}$ and $\hat{v}=(\dim U_i)_{i=1}^k$. The number of arrows between $i,j\in\{1,\cdots,k\}$ is $-(v^i,v^j)$ if $i\neq j$, and $p(v^i):=1-\frac{1}{2}(v^i,v^i)$ if $i=j$. The framing is $\hat{w}_i=w\cdot v^i-(v^0,v^i)$. We also need to add some loops at $\infty$ but those are just spaces with trivial $G_r$-actions. The orientation of $\hat{Q}$ is chosen such that the $G_r$-modules $R_x\oplus \fg/\fg_r$ and $R$ are isomorphic up to a trivial summand (although this choice may violate the condition that the vertex $\infty$ in $\hat{Q}$ is a sink). 
 
 The construction of $(\hat{Q},\hat{v},\hat{w})$ implies a decomposition $T^*R=T^*R_x\oplus T^*(\fg/\fg_r)\oplus R_0$, where $R_x=R(\hat{Q},\hat{v},\hat{w})$ and $R_0$ is a symplectic vector space with trivial $G_r$-action. The decomposition leads to an equality of formal Poisson schemes (\cite[2.1.6]{bl})
 $$ \cM_\fp^0(v,w)^{\wedge_x}=\hat{\cM}_\fp^0(\hat{v},\hat{w})^{\wedge_0}\times R_0^{\wedge_0}, $$
 where the superscript $\bullet^{\wedge_x}$ means the completion near $x$. Consider the restriction map $re: \fp=\fg^{*G}\to \hat{\fp}=\fg_r^{*G_r}$. We set $\hat{\cM}_\fp^0(\hat{v},\hat{w}):=\fp\times_{\hat{\fp}}\hat{\cM}_{\hat{\fp}}^0(\hat{v},\hat{w})$.
 
 We also have a similar decomposition for the smooth quiver varieties. 
 $$ \cM_\fp^\theta(v,w)^{\wedge_x}=(\hat{\cM}_\fp^\theta (\hat{v},\hat{w})\times R_0)^{\wedge_0}, $$
 where $\cM_\fp^\theta(v,w)^{\wedge_x}:=\cM_\fp^0(v,w) ^{\wedge_x}\times_{\cM_\fp^0(v,w)}\cM_\fp^\theta(v,w)$ and on the right hand side we slightly abuse the notation and write $\theta$ for the restriction of $\theta$ to $G_r$.
  
 On the quantized level, we consider the sheaf of $\CC[\fP]$-algebras, with $\fP:=\CC^{Q_0}$ denoting the affine parameter space for quantum Hamiltonian reductions,  $$\cA^\theta_{\fP}(v,w):=[D(R)/D(R)\Phi([\fg,\fg])|_{(T^*R)^{\theta-ss}}]^G $$ 
 on $\cM^\theta_{\fp}(v,w)$, and the $\CC[\fP]$-algebra $\cA_{\fP}(v,w)=\Gamma(\cA^\theta_{\fP}(v,w))$. 
 
 For the framed quiver $(\hat{Q},\hat{v},\hat{w})$ constructed above, we define $\hat{\cA}^0_{\fP}(\hat{v},\hat{w})$ as follows. Let $\hat{\fP}=(\hat{\fg}^*)^{\hat{G}}$ be the parameter space for the quantizations associated to $(\hat{Q},\hat{v},\hat{w})$. Let us define an affine map $\hat{re}:\fP\rightarrow \hat{\fP}$ whose differential is the restriction map $re:(\fg^*)^G\to (\hat{\fg}^*)^{\hat{G}}$. Namely, we have an element $\varrho(v)\in \ZZ^{Q_0}$, defined by 
 $$\varrho(v)_k:=-\frac{1}{2}(\sum_{a,h(a)=k}v_{t(a)}-\sum_{a,t(a)=k}v_{h(a)}-w_k), $$
 and $\hat{\varrho}(\hat{v})$ defined analogously. Set 
 $$\hat{re}(\lambda):=re(\lambda-\varrho(v))+\hat{\varrho}(\hat{v}).$$
 Further, set $\hat{\cA}^0_{\fP}(\hat{v},\hat{w}):=
 \CC[\fP]\otimes_{\CC[\hat{\fP}]}\hat{\cA}^0_{\hat{\fP}}(\hat{v},\hat{w})$ and define $\hat{\cA}^\theta_{\fP}(\hat{v},\hat{w})$ in a similar way. 
 
 Recall that starting from a filtered algebra $\cA$, we can form the Rees algebra $\cA_\hbar:=\bigoplus_i\cA^{\leq i}\hbar^i$ which is graded with $\deg\hbar=1$. Consider the Rees sheaves and algebras $\cA^{\theta}_{\fP}(v,w)_\hbar$, $\cA_{\fP}(v,w)_\hbar$, $\cA_{\fP}^0(v,w)_\hbar$ defined for the filtrations by orders of differential operators. We can complete those at $x$ getting the algebras $\cA_{\fP}(v,w)_\hbar^{\wedge_x}$, $\cA^0_{\fP}(v,w)_\hbar^{\wedge_x}$ with $\cA_{\fP}(v,w)_\hbar^{\wedge_x}/(\hbar)=\CC[\cM_{\fp}(v,w)^{\wedge_x}]$, $\CC[\cM^0_{\fp}(v,w)^{\wedge_x}]\twoheadrightarrow \cA^0_{\fP}(v,w)_\hbar^{\wedge_x}/(\hbar)$ and the sheaf of algebras $\cA^{\theta}_{\fP}(v,w)_\hbar^{\wedge_x}$ on $\cM^{\theta}_{\fp}(v,w)^{\wedge_x}$ obtained by the $\hbar$-adic completion of $\cA^0_{\fP}(v,w)_{\hbar}^{\wedge_x}\otimes_{\cA^0_{\fP}(v,w)_\hbar}\cA^\theta_{\fP}(v,w)_\hbar$. Note that $\cA^{\theta}_{\fP}(v,w)_\hbar^{\wedge_x}/(\hbar)=\cO_{\cM^{\theta}_{\fp}(v,w)^{\wedge_x}}$. We have the following result in \cite[Lemma 3.7]{bl} and \cite[Lemma 6.5.2]{Lquantization}.
 
 \begin{lemma}\label{decomp}
	We have the following decompositions.
 	\begin{align*}
 	&\cA^0_{\fP}(v,w)_\hbar^{\wedge_x}=	\hat{\cA}^0_{\fP}(v,w)_\hbar^{\wedge_0}\widehat{\otimes}_{\CC[[\hbar]]}\mathbf{A}_\hbar^{\wedge_0},\\
 	&\cA^\theta_{\fP}(v,w)_\hbar^{\wedge_x}=
 	\left(\hat{\cA}^\theta_{\fP}(v,w)_\hbar\otimes_{\CC[[\hbar]]}\mathbf{A}_\hbar\right)^{\wedge_0},\\
 	&\cA_{\fP}(v,w)_\hbar^{\wedge_x}=
 	\hat{\cA}_{\fP}(v,w)_\hbar^{\wedge_0}\widehat{\otimes}_{\CC[[\hbar]]}\mathbf{A}_\hbar^{\wedge_0}.
 	\end{align*}
  Here $\mathbf{A}$ denotes the Weyl algebra of the symplectic vector space $R_0$, defined as 
  $$\mathbf{A} = T(R_0)/(u\otimes v-v\otimes u - \omega(u,v)),$$ 
  and $\mathbf{A}_\hbar$ denotes the homogenized Weyl algebra.
 \end{lemma}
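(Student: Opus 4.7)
The plan is to deduce all three decompositions from a single master statement: a $G$-equivariant formal splitting of the Rees version $D(R)_\hbar$ of the Weyl algebra along the closed orbit $Gr \subseteq T^*R$, refining the classical decomposition $T^*R \cong T^*(\fg/\fg_r) \oplus T^*R_x \oplus R_0$ of $G_r$-representations recalled above. Once this quantum splitting is in hand, performing quantum Hamiltonian reduction in stages yields all three equations.

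The first and main step is to construct, via a quantum equivariant Darboux--Weinstein argument, a $G$-equivariant isomorphism of $\hbar$-adically completed algebras
\begin{equation*}
D(R)_\hbar^{\wedge_{Gr}} \;\cong\; \left( D\bigl(T^*(G/G_r)\bigr)_\hbar \,\widehat{\otimes}_{\CC[[\hbar]]}\, D(R_x)_\hbar \,\widehat{\otimes}_{\CC[[\hbar]]}\, \mathbf{A}_\hbar \right)^{\wedge_0},
\end{equation*}
compatible with the quantum comoment maps up to the shift encoded by $\hat{re}$. Such a splitting is built order by order in $\hbar$; the obstructions lie in equivariant Hochschild cohomology of the symplectic slice and vanish because the slice is a smooth affine symplectic $G_r$-variety. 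The parameter shift $\hat{re}(\lambda) = re(\lambda - \varrho(v)) + \hat{\varrho}(\hat{v})$ arises as the half-trace of the $G_r$-representation on $\fg/\fg_r$ combined with the analogous correction on the slice side; the quantities $\varrho(v)$ and $\hat{\varrho}(\hat{v})$ are precisely these half-trace shifts written in the quiver combinatorics.

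Next I would perform quantum Hamiltonian reduction in stages using this splitting. Since $G_r$ acts trivially on $\mathbf{A}_\hbar$, the factor $\mathbf{A}_\hbar^{\wedge_0}$ passes through both the imposition of the relation $\Phi(x)-\lambda(x)=0$ and the operation of taking $G$-invariants. The factor $D(T^*(G/G_r))_\hbar$ is eliminated by reduction: in a formal neighborhood of the zero section, quantum reduction of $D(T^*(G/G_r))_\hbar$ by the left $G$-action produces a single copy of $\CC[[\hbar]]$. What remains is $D(R_x)_\hbar$ reduced by $G_r$ at the parameter $\hat{re}(\lambda)$, which by definition is $\hat{\cA}_{\fP}^0(\hat{v},\hat{w})_\hbar^{\wedge_0}$. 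This gives the first decomposition; the second follows by pulling the construction back to the $\theta$-semistable locus, using the classical identification $\cM_\fp^\theta(v,w)^{\wedge_x} \cong (\hat{\cM}_\fp^\theta(\hat{v},\hat{w}) \times R_0)^{\wedge_0}$ already recorded above. The third decomposition is obtained by taking global sections and invoking the vanishing of higher cohomology of $\cA_\fP^\theta(v,w)_\hbar^{\wedge_x}$, which holds because its associated graded is $\cO_{\cM_\fp^\theta(v,w)^{\wedge_x}}$, so that $\Gamma$ commutes with completion and with the $\widehat{\otimes}$ factor.

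The hard part will be the first step: producing the $G$-equivariant (not merely $G_r$-equivariant) quantum Darboux isomorphism with precisely the correct parameter shift. The bookkeeping of canonical-bundle corrections and half-traces is delicate, and without $G$-equivariance the subsequent reduction in stages would not go through cleanly. This construction is essentially the content of \cite[Lemma 6.5.2]{Lquantization}, on which one would rely.
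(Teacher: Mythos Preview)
The paper does not actually prove this lemma: it is quoted verbatim from \cite[Lemma 3.7]{bl} and \cite[Lemma 6.5.2]{Lquantization}, with no argument given in the text. Your outline---an equivariant quantum Darboux--Weinstein splitting of $D(R)_\hbar^{\wedge_{Gr}}$, followed by Hamiltonian reduction in stages, with the $\varrho$-shift coming from half-trace corrections---is precisely the strategy of those references, and you yourself acknowledge relying on \cite[Lemma 6.5.2]{Lquantization} for the hard step, so there is nothing substantive to compare.

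One small notational quibble: what you write as $D\bigl(T^*(G/G_r)\bigr)_\hbar$ should be the homogenized Weyl algebra of the symplectic vector space $T^*(\fg/\fg_r)$, i.e.\ $D(\fg/\fg_r)_\hbar$, not differential operators on the cotangent bundle itself; the latter would be twice too large. With that corrected, the rest of the sketch is accurate.
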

 
With the preparation above, one can construct the functor
$$\bullet_{\dagger,x}: \operatorname{HC}(\cA_{\fP}(v,w)) \rightarrow\operatorname{HC}(\hat{\cA}_{\fP}(\hat{v},\hat{w})),$$
for $x\in \cM_{\fp}(v,w)$, as in \cite[3.3.3]{bl}. We are not going to recall the technical details of the construction, but only the following properties, following \cite[3.4]{bl}.
\begin{prop}\label{dagger}
 	The following are true.
	\begin{itemize}
 		\item[a.] The functor $\bullet_{\dagger,x}$ is exact and $\CC[\fP]$-linear.
 		\item[b.] Let $\cB$ be a HC $\cA_\fP(v,w)$-bimodule. The associated variety of $\cB_{\dagger,x}$ is uniquely characterized by $V(\cB_{\dagger,x})\times\cL^{\wedge_x}=V(\cB)^{\wedge_x}$, where $\cL$ is the symplectic leaf through $x$. A similar claim holds for HC $\cA_\fP^0$-bimodules.
 		\item[c.] There is a right adjoint functor $\bullet^{\dagger,x}: \HC_{fg}(\hat{\cA}_{\fP}(\hat{v},\hat{w})) \to \HC(\cA_{\fP}(v,w))$ of $\bullet_{\dagger,x}$, where $\HC_{fg}(\hat{\cA}_{\fP}(\hat{v},\hat{w}))$ denotes the subcategory of $\HC(\hat{\cA}_{\fP}(\hat{v},\hat{w}))$ consisting of all bimodules $\mathcal{B}$ that are finitely generated over $\CC[\fP]$.
	\end{itemize}
 \end{prop}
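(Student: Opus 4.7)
The plan is to follow the construction of $\bullet_{\dagger,x}$ laid out in \cite[Section 3.3--3.4]{bl}, which produces $\cB_{\dagger,x}$ in several steps, and then to verify (a)--(c) step by step. Given a HC $\cA_\fP(v,w)$-bimodule $\cB$ with a good filtration, first form the Rees bimodule $\cB_\hbar$, then complete at $x$ to obtain $\cB_\hbar^{\wedge_x}$, which is a HC bimodule over $\cA_\fP(v,w)_\hbar^{\wedge_x}$. By Lemma \ref{decomp}, the completed algebra decomposes as $\hat{\cA}_\fP(\hat v,\hat w)_\hbar^{\wedge_0}\widehat{\otimes}_{\CC[[\hbar]]}\mathbf{A}_\hbar^{\wedge_0}$, and one shows (this is the heart of the construction) that the bimodule $\cB_\hbar^{\wedge_x}$ inherits a matching tensor factorization
$$\cB_\hbar^{\wedge_x}\;\cong\;\hat{\cB}_\hbar^{\wedge_0}\,\widehat{\otimes}_{\CC[[\hbar]]}\,\mathbf{A}_\hbar^{\wedge_0},$$
where $\hat{\cB}_\hbar^{\wedge_0}$ is a HC bimodule over $\hat{\cA}_\fP(\hat v,\hat w)_\hbar^{\wedge_0}$. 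Taking $\CC^\times$-locally finite vectors and setting $\hbar=1$ yields $\cB_{\dagger,x}$.

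For (a), each of the operations above---Rees construction, $\hbar$-adic completion, extraction of the non-Weyl tensor factor, passage to $\CC^\times$-finite vectors, and specialization at $\hbar=1$---is exact; the composition is therefore exact. The $\CC[\fP]$-linearity is built in: $\CC[\fP]$ acts centrally on each object in the construction and the identifications in Lemma \ref{decomp} are $\CC[\fP]$-linear. For (b), one tracks the associated graded through the construction. The associated variety $V(\cB)$ is a Poisson subvariety of $\cM_\fp(v,w)$, and the Nakajima decomposition $\cM_\fp(v,w)^{\wedge_x}=\hat{\cM}_\fp(\hat v,\hat w)^{\wedge_0}\times R_0^{\wedge_0}$ induces the factorization $V(\cB)^{\wedge_x}=V(\cB_{\dagger,x})^{\wedge_0}\times R_0^{\wedge_0}$ compatibly with the bimodule decomposition. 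Since $R_0^{\wedge_0}$ is identified with the formal neighborhood $\cL^{\wedge_x}$ of the symplectic leaf through $x$ (it is the transverse slice to $\cL$ that is trivial, while the leaf direction is $R_0$), this gives the stated characterization, which determines $V(\cB_{\dagger,x})$ uniquely.

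For (c), construct $\bullet^{\dagger,x}$ by reversing the procedure: given $\cB'\in \HC_{fg}(\hat{\cA}_\fP(\hat v,\hat w))$, form $\cB'_\hbar\widehat{\otimes}\mathbf{A}_\hbar^{\wedge_0}$ to get a HC bimodule over $\cA_\fP(v,w)_\hbar^{\wedge_x}$, then produce a microlocal sheaf on a $\CC^\times$-stable neighborhood of the orbit through a lift of $x$ by gluing, and finally take $\CC^\times$-finite global sections and specialize $\hbar=1$. The finite generation of $\cB'$ over $\CC[\fP]$ is exactly what ensures the resulting object is again a HC bimodule (rather than a completion of one). Adjointness is then checked by writing down unit and counit morphisms in terms of the decomposition of Lemma \ref{decomp} and verifying the triangle identities.

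The main technical obstacle is establishing the bimodule factorization $\cB_\hbar^{\wedge_x}\cong \hat{\cB}_\hbar^{\wedge_0}\widehat{\otimes}\mathbf{A}_\hbar^{\wedge_0}$ functorially in $\cB$. This rests on the rigidity of the Weyl algebra $\mathbf{A}_\hbar^{\wedge_0}$: any HC bimodule over a completed tensor product with a Weyl factor splits as such a tensor product in a canonical way, because all HC bimodules over the Weyl algebra are direct sums of copies of the regular bimodule. This is proved in \cite[Lemma 3.7]{bl} at the level of algebras and extended to bimodules by the same formal-deformation-theoretic argument; once it is in place, exactness, $\CC[\fP]$-linearity, the associated variety formula, and the adjunction in (c) all follow formally.
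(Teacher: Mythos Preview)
The paper does not actually prove this proposition: it explicitly says ``We are not going to recall the technical details of the construction, only the following properties, following \cite[3.4]{bl},'' and then simply states (a)--(c) as a quotation of results from \cite{bl}. So there is no in-paper argument to compare against; the proposition functions as a black-box citation.

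Your sketch is a reasonable outline of the construction and arguments in \cite[\S\S3.3--3.4]{bl} that the paper is invoking: Rees bimodule, completion at $x$, splitting off the Weyl factor via Lemma~\ref{decomp}, passage to $\CC^\times$-finite vectors, and specialization $\hbar=1$. The identification of $R_0^{\wedge_0}$ with the leaf direction $\cL^{\wedge_x}$ is the correct reading of the decomposition, and the mechanism you name for the bimodule factorization (rigidity of HC bimodules over the completed Weyl algebra) is indeed the key input in \cite{bl}. One small caution: exactness is not entirely automatic from the list of operations you give, since $\hbar$-adic completion is only exact on suitable finitely generated modules and the ``extraction of the non-Weyl factor'' must be shown to be functorial and exact; in \cite{bl} this is handled by working with good filtrations throughout and using that $\gr\cB$ is finitely generated over $A$, so the completions behave well. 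If you want this to stand as an independent proof rather than a pointer to \cite{bl}, you would need to spell out that step.
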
 
  
 \subsection{Chains of ideals: Grassmannian case}
 Now we apply the functor $\bullet_{\dagger,x}$ to the category $\HC(\cA_\lambda(v,w))$ constructed from the simplest quiver,  i.e., the quiver consisting of only one vertex and no loops. In this case, the algebra $\cA_{\lambda}(v,w)$ can be identified with the algebra $\cD_\lambda(\Gr(v,w))$ of $\lambda$-twisted differential operators on the Grassmannian variety. First we identify the framed quiver ($\hat{Q}$, $\hat{v}$, $\hat{w}$). Then we use the functor $\bullet_{\dagger,x}$ to obtain a description of two-sided ideals in $\cD_\lambda(\Gr(v,w))$, which are submodules of the regular module (which is HC) and hence HC. Assume, for convenience, that $w>2v$, $\theta>0$ (so that $(\lambda,\theta)$ is generic for any $\lambda$).
 
 We first look at the situation with $\lambda=0$. We will need the following classical result from Algebraic Geometry.
 \begin{lemma}\label{cohom}
 	When $v$ and $w$ are fixed, $H^i(\Gr(v,w),\cO(n))=0$ for any $i\in\ZZ$ if and only if $1-w\leq n\leq -1$, with $\cO$ denoting the trivial line bundle on $\Gr(v,w)$.
 \end{lemma}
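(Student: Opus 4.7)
The plan is to realize $\Gr(v,w)$ as the homogeneous space $\GL_w/P$, where $P$ is the standard parabolic whose Levi factor is $\GL_v\times\GL_{w-v}$, and then to invoke the Borel--Weil--Bott theorem. With the standard normalization in which $\cO(1)$ is the ample generator of the Picard group, $\cO(n)$ is the homogeneous line bundle corresponding to the integral $\GL_w$-weight
$\lambda_n:=(n,n,\ldots,n,0,\ldots,0)$
with $v$ leading copies of $n$ followed by $w-v$ zeros.

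The key step is to determine when $\lambda_n+\rho$ is singular for the Weyl group $\fS_w$, since Borel--Weil--Bott asserts that every $H^i(\Gr(v,w),\cO(n))$ vanishes if and only if the shifted weight has a repeated coordinate. Taking $\rho=(w-1,w-2,\ldots,1,0)$, one gets
$$\lambda_n+\rho=(n+w-1,\,n+w-2,\,\ldots,\,n+w-v,\;w-v-1,\,w-v-2,\,\ldots,\,1,\,0).$$
Each of the two blocks is strictly decreasing, so a coincidence can only occur between an entry $n+w-i$ with $1\leq i\leq v$ and an entry $j$ with $0\leq j\leq w-v-1$. Solving $n=j-w+i$ as $(i,j)$ range over these intervals gives exactly $n\in\{1-w,2-w,\ldots,-1\}$, which is the stated range.

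For the converse direction, outside this range $\lambda_n+\rho$ is regular and Borel--Weil--Bott identifies a unique nonvanishing cohomology group: for $n\geq 0$ the weight is already dominant and $H^0\neq 0$; for $n\leq -w$ the minimal-length Weyl element sorting $\lambda_n+\rho$ into strict decreasing order interlaces the two blocks and has length $v(w-v)=\dim\Gr(v,w)$, yielding nonzero top cohomology, compatibly with Serre duality and $\omega_{\Gr(v,w)}\cong\cO(-w)$. The whole argument is purely combinatorial once the weight $\lambda_n$ is pinned down, so no step is a genuine obstacle; the only point deserving care is fixing the normalization convention for $\cO(n)\leftrightarrow\lambda_n$ so that the boundary values come out as precisely $1-w$ and $-1$ rather than a shift thereof.
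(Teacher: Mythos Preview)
Your argument is correct. The paper does not supply a proof of this lemma at all: it is introduced as ``the following classical result from Algebraic Geometry'' and simply stated without justification. Your Borel--Weil--Bott computation is the standard way to verify it, and the details are right---the singular range $1-w\le n\le -1$ is exactly the set of $n$ for which $\lambda_n+\rho$ has a repeated coordinate, while for $n\ge 0$ the weight is dominant and for $n\le -w$ Serre duality with $\omega_{\Gr(v,w)}\cong\cO(-w)$ produces nonzero top cohomology. The only caveat you flag yourself, the normalization of $\cO(1)$, is handled consistently with the paper's later usage.
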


 Now we are ready to prove the following result.
 \begin{prop}\label{regulargrass}
 	The algebra $\cD(\Gr(v,w))$ (with $w>2v$) has a chain of two-sided ideals 
 	$$ \{0\}= \cI_0\subsetneq \cI_1 \subsetneq \cdots \subsetneq \cI_v \subsetneq \cI_{v+1}=\cD(\Gr(v,w)). $$
 	Furthermore, $\cI_i$ with $i=0,\cdots,v+1$ exhaust all two-sided ideals of $\cD(\Gr(v,w))$.
 \end{prop}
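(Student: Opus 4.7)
My plan is to combine the symplectic stratification of $\cM(v,w) := \Spec \CC[T^*\Gr(v,w)]$ with the restriction functor $\bullet_{\dagger,x}$ of Proposition \ref{dagger}, and to induct on $v$.

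The first step is to describe the Poisson geometry of $\cM(v,w)$. Because $w > 2v$, the moment map is flat (by \cite{cb}), so $\cM(v,w)$ is the affinization of $T^*\Gr(v,w)$. Its symplectic leaves are labeled by the rank $s \in \{0, 1, \dots, v\}$ of a representative map $X: V \to W$ with $YX = 0$, and their closures form a totally ordered chain
\[
\{0\} = \overline{\cL_0} \subsetneq \overline{\cL_1} \subsetneq \dots \subsetneq \overline{\cL_v} = \cM(v,w).
\]
Any two-sided ideal $\cI \subseteq \cD(\Gr(v,w))$ is a sub-bimodule of the regular HC bimodule and hence itself HC; the associated variety of the quotient $\cD(\Gr(v,w))/\cI$ is a closed Poisson subvariety, so it must equal some $\overline{\cL_r}$. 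This already shows that the set of two-sided ideals is totally ordered and has cardinality at most $v+2$.

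For the classification at each level, I would apply $\bullet_{\dagger,x}$ at a generic point $x \in \cL_s$. A direct computation with the formulas preceding Lemma \ref{decomp} identifies the slice quiver as again a one-vertex-no-loops quiver with $\hat v = v-s$ and $\hat w = w - 2s$ (so the inequality $\hat w > 2 \hat v$ is preserved), and gives $\hat{re}(0) = -s \in \ZZ$. Since the twist by an integer is realized by conjugation with a tautological line bundle, the slice algebra is abstractly isomorphic to $\cD(\Gr(v-s, w-2s))$. By Proposition \ref{dagger}(b), an ideal $\cI$ with $V(\cD/\cI) = \overline{\cL_s}$ restricts to a finite-dimensional HC bimodule over this smaller $\cD(\Gr)$. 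The inductive hypothesis, with base case $v=0$ where $\cD(\text{pt}) = \CC$ has exactly two ideals, gives uniqueness of such a finite-dimensional bimodule and hence uniqueness of $\cI$; for existence I would apply the right adjoint $\bullet^{\dagger,x}$ of Proposition \ref{dagger}(c) to the distinguished finite-dimensional bimodule produced by the induction and intersect the resulting HC bimodule with the regular one to obtain a genuine two-sided ideal with the desired associated variety.

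Strictness of the chain follows from the strict inclusions of associated varieties together with exactness of $\bullet_{\dagger,x}$. The main obstacle is securing the uniqueness of a finite-dimensional primitive HC bimodule on each slice algebra, which is the heart of the induction; Lemma \ref{cohom} enters precisely here, pinning down the range $1-w \leq n \leq -1$ of twists $\cO(n)$ with trivial derived global sections, which is exactly what is needed to construct and distinguish the candidate finite-dimensional bimodules at each step and to prevent accidental collapses in the chain.
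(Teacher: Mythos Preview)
Your overall strategy—stratify $\cM(v,w)$ by the symplectic leaves $\cL_s$, use $\bullet_{\dagger,x}$ to pass to slice algebras, and run an induction on $v$—is reasonable and is close in spirit to the paper, but there is a genuine gap in the uniqueness direction. The sentence ``This already shows that the set of two-sided ideals is totally ordered and has cardinality at most $v+2$'' is not justified: knowing that every associated variety is one of the $\overline{\cL_r}$ does \emph{not} by itself force the ideals to be comparable or to be at most one per leaf closure; two distinct ideals could a priori share the same associated variety. Your inductive step inherits this problem. Suppose $\cI\subsetneq\cI'$ both have $V(\cD/\cI)=V(\cD/\cI')=\overline{\cL_s}$. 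Restricting at $x\in\cL_s$ and using exactness gives $(\cI'/\cI)_{\dagger,x}=0$, so $V(\cI'/\cI)\subseteq\overline{\cL_{s-1}}$; restricting now at a point $y$ of the open leaf of $V(\cI'/\cI)$ produces a nonzero finite-dimensional sub-bimodule of $(\cD/\cI)_{\dagger,y}$, and you must argue that the slice quotient admits no such sub-bimodule. The inductive hypothesis on the \emph{ideal lattice} of the slice algebra is not enough to rule this out—you need control over finite-dimensional sub-bimodules of non-simple quotients, which is strictly more information than the chain of ideals.

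The paper closes exactly this gap by stepping outside the quiver framework: it uses that $\cD(\Gr(v,w))$ is a quotient of $\mathcal U(\sln_w)$, so that two-sided ideals correspond (via Borho--Brylinski/Beilinson--Bernstein) to submodules of the Verma module $\Delta(0)$, and then observes that all composition factors of $\Delta(0)$ below the top are infinite-dimensional, which kills the finite-dimensional obstruction $\cI_s/\cI$. Your sketch would need an analogous input; without it the induction does not close. (A smaller point: your slice computation gives $\hat v=v-s$, $\hat w=w-2s$ with twist $-s$, whereas the paper indexes by the multiplicity of the trivial summand and gets $\hat v=s$, $\hat w=w-2v+2s$ with twist $v-s$; up to relabeling these agree, and the integer twist is indeed harmless, but Lemma~\ref{cohom} is used in the paper for the \emph{singular} case of Theorem~\ref{singulargrass}, not for producing or distinguishing the finite-dimensional irreducibles here, where the twist is regular.)
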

 \begin{proof}
 	The following computation identifies the slice quiver $\hat{Q}$ for the quiver $Q$ consisting of only one vertex and no loops. 
 	
 	A semi-simple representation $r$ of the double quiver $\overline{Q^w}$, where $Q^w$ has two vertices $\{1,\infty\}$ and $w$ arrows from $1$ to $\infty$, with the dimension $(v,1)$ can be decomposed as $r=r_0\oplus r_1\otimes \CC^s$, where $r_0$ is an irreducible representation of dimension $(v-s,1)$ and $r_1$ is irreducible of dimension $(1,0)$ with the multiplicity $s$ satisfying $0\leq s\leq v$. Note that $p((v-s,1))=(w-v+s)(v-s)\geq 0$, so $(v-s,1)$ is indeed a root of $Q^w$ for any $s$ like above. So the slice quiver $\hat{Q}$ consists of 1 vertex and no loops because $p(\hat{v})=0$. The dimension is $\hat{v}=s$ and the framing is $\hat{w}=w\cdot 1-(v-s,1)=w-2v+2s$. Note that in this case, $re:(\fg^*)^G=\CC\to \CC$ is the identity map and thus $\hat{re}(0)=v-s\geq 0$. We obtain $(v+1)$ possible slice algebras denoted by $\cA_s:=\cA_{v-s}(s,w-2v+2s)=\cD_{v-s}(\Gr(s,w-2v+2s))$, with $s=0,\cdots,v$.
 
 	Note that $\cA_s=\cD_{v-s}(\Gr(s,w-2v+2s))$
 	has a unique non-zero finite-dimensional irreducible representation $L_s = H^0(\Gr(s,w-2v+2s),\cO(v-s))$. Then $\cI'_s:=\Ann(L_s)$ is a (maximal) two-sided ideal of finite codimension in $\cA_s$, where $\Ann(\cdot)$ denotes the left annihilator (which coincides with the right annihilator) of $L_s$ in $\cD_{v-s}(\Gr(s,w-2v+2s))$. Consider the right adjoint functor $\bullet^{\dagger,x}$, for $x$ being the image of $r$ in $\cM(v,w)=\Spec\CC[\cM_0^\theta(v,w)]$ with generic $\theta$. Let $\cI_s$ denote the kernel of the natural map
 	$$ \cD(\Gr(v,w))\to (\cA_s/\cI_s')^{\dagger,x}. $$ 
 	These are two-sided ideals in $\cD(\Gr(v,w))$ with the associated varieties $V(\cD(\Gr(v,w))/\cI_s)=\overline{\cL_s}$ (see the proof of \cite[Lemma 10.10]{bl}). Here $\cL_s$ denotes the symplectic leaf in $\cM(v,w)$, which can be described as follows. 
 	
 	There is a map $\mu^{-1}(0)\to \gln_w$ given by $(i,j)\mapsto ij$ with $i\in\Hom_\CC(V,W)$ and $j\in\Hom_\CC(W,V)$. It induces an embedding $\Spec\CC[\cM_0^\theta(v,w)] \hookrightarrow \gln_w$, whose image consists of all $w\times w$-matrices with square 0 and of rank not exceeding $v$. The symplectic leaf $\cL_s$ consists of those of rank $s$ and the closure $\overline{\cL_s}$ consists of those of rank not exceeding $s$, i.e., $\overline{\cL_s}=\bigcup_{r\leq s}\cL_r$.
 		
 	By construction, $\overline{\cL_s}\subsetneq \overline{\cL_{s+1}}$, therefore $(\cI_s)_{\dagger,x}$ is a proper ideal in $\cA_{s+1}$ and hence contained in a maximal ideal. Recall that $\cD(\Gr(v,w))$ is isomorphic to a quotient of the universal enveloping algebra $\mathcal{U}(\sln_w)$ (\cite{bb}). So $\cI'_{s+1}$ is the unique maximal ideal in $\cA_{s+1}$ because its preimage in the central reduction algebra $\mathcal{U}_{v-s-1}(\gln_{w-2v+2s+2})$ is the unique maximal ideal. So $\cI_s$ lies in the kernel of $\cD(\Gr(v,w))\to (\cA_{s+1}/\cI_{s+1}')^{\dagger,x}$, which is $\cI_{s+1}$, i.e., $\cI_s\subsetneq \cI_{s+1}$. So we have obtained a chain of ideals
 	$$  \cI_0\subsetneq \cI_1 \subsetneq \cdots \subsetneq \cI_v. $$
 	Further, note that $\cA_0=\cD(\Gr(0,w-v))=\CC$, so $\cI_0=\{0\}$. On the other hand, $\cA_v=\cD(\Gr(v,w))$ and the restriction functor $\bullet_{\dagger,x}:\cD(\Gr(v,w))\to \cA_v$ is just the identity so $\cI_v$ is the maximal ideal in $\cD(\Gr(v,w))$.
 	
 	Finally it remains to verify that any two-sided ideal $\cI\subseteq\cD(\Gr(v,w))$ coincides with some $\cI_s$. In fact, $V(\cD(\Gr(v,w))/\cI)$ is a Poisson subvariety, hence a union of symplectic leaves in $\cM_0(v,w)$. So $V(\cD_\lambda(\Gr(v,w))/\cI)=\overline{\cL_s}$ for some $0\leq s\leq v$. By construction, $\cI_s$ is the maximal ideal with associated variety $\overline{\cL_s}$. It follows that $\cI\subseteq\cI_s$. Since any finite dimensional representation of $\mathcal{U}(\sln_w)$ is completely reducible , the same is true for $\cD(\Gr(v,w))$. 
 	
 	Suppose $V(\cI_s/\cI)=\overline{\cL_r}$ for some $0\leq r\leq s$. Consider the restriction functor $$\bullet_{\dagger,x}:\HC(\cD(\Gr(v,w)))\to \HC(\cD_{v-r}(\Gr(r,w-2v+2r))).$$ It sends $\cI_s/\cI$ to a finite dimensional module $(\cI_s/\cI)_{\dagger,x}=(\cI_s)_{\dagger,x}/\cI_{\dagger,x}$. So we can assume that $\cI_s/\cI$ is finite dimensional.
 	
 	Recall that $\cD_\lambda(\Gr(v,w))$ is a quotient of $\mathcal{U}(\sln_w)$. So the two-sided ideals $\cI\subseteq\cI_s\subseteq \cD_\lambda(\Gr(v,w))$ give rise to ideals $I\subseteq I_s$ in $\mathcal{U}(\sln_w)$ containing the kernel of the map $\mathcal{U}(\sln_w)\twoheadrightarrow \cD(\Gr(v,w))$. 
 	Furthermore, the two-sided ideals in $\mathcal{U}(\sln_w)$ are in bijection with submodules of the Verma module	$\Delta(0)$ via $I\mapsto I\Delta(0)$ (\cite{bb}). So we have $I\Delta(0)\subseteq I_s\Delta(0)\subseteq \Delta(0)$. Therefore the quotient $I_s\Delta(0)/I\Delta(0)$, if nontrivial, should be filtered by some composition factors $L(\mu)$ (the unique simple quotient of the corresponding Verma module) of $\Delta(0)$, with $\mu<0$ being a weight of $\sln_w$ lying in the same dot action orbit by the Weyl group $\fS_w$. In particular, $\mu<0$ implies that $\mu$ is not dominant, and that $L(\mu)$ is infinite-dimensional, which leads to a contradiction. So $I=I_s$ and $\cI=\cI_s$.
 \end{proof}

 For any regular integral $\lambda$, $\cD_\lambda(\Gr(v,w))$ and $\cD(\Gr(v,w))$ are Morita equivalent. Now we proceed to the case when $\lambda$ is singular. From Lemma \ref{cohom} one can deduce that the singular locus for the parameter $\lambda$ is $\{1-w,2-w,\cdots,-1\}$. The following is the main result of this section.
 
 \begin{theorem}\label{singulargrass}
 	The algebra $\cD_\lambda(\Gr(v,w))$ ($w>2v$) with singular $\lambda\in\{1-w,2-w,\cdots,-1\}$ has a chain of two-sided ideals 
 	$$ \{0\}= \tilde{\cI}_0\subsetneq \tilde{\cI}_1 \subsetneq \cdots \subsetneq \tilde{\cI}_p \subsetneq \tilde{\cI}_{p+1}=\cD_\lambda(\Gr(v,w)), $$
 	where $p=\max\{\lambda+v,v-w-\lambda,0\}$.
 	Furthermore, $\tilde{\cI}_i$ with $i=0,\cdots,p+1$ exhaust all two-sided ideals of $\cD_\lambda(\Gr(v,w))$.
 \end{theorem}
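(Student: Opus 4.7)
The plan is to follow the same strategy as the proof of Proposition \ref{regulargrass}: use the restriction functor $\bullet_{\dagger,x}$ together with its right adjoint $\bullet^{\dagger,x}$ to transfer information between $\cD_\lambda(\Gr(v,w))$ and its slice algebras $\cA_s$, and pull back the maximal ideals of those slices. The essential new phenomenon is that, for singular $\lambda$, some of the slice algebras are themselves singular and do not admit a finite-codimensional maximal ideal to pull back. Lemma \ref{cohom} identifies precisely which $s$ give a regular slice.

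I would begin by repeating the slice-quiver calculation from the proof of Proposition \ref{regulargrass} verbatim: for $x\in\cM(v,w)$ with semisimple orbit representative $r$, the slice quiver has one vertex and no loops, with $\hat{v}=s$ and $\hat{w}=w_s:=w-2v+2s$, $s\in\{0,\ldots,v\}$, giving slice algebras $\cA_s=\cD_{\lambda'_s}(\Gr(s,w_s))$ where $\lambda'_s=\hat{re}(\lambda)=\lambda+v-s$ (this recovers the regular case, where $\lambda'_s=v-s$ when $\lambda=0$). The rank stratification of $\cM(v,w)\hookrightarrow\gln_w$ into symplectic leaves is unchanged, and Proposition \ref{dagger}(b) still translates slice data into associated varieties.

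Now Lemma \ref{cohom} singles out the regular slices: $\cA_s$ is regular iff $\lambda'_s\notin\{1-w_s,\ldots,-1\}$, i.e., iff $\lambda'_s\ge 0$ or $\lambda'_s\le -w_s$, which after substituting $\lambda'_s=\lambda+v-s$ and $w_s=w-2v+2s$ becomes $s\le\lambda+v$ or $s\le v-w-\lambda$. The regular $s$ are thus exactly those with $s\le p$ for $p:=\max\{\lambda+v,\ v-w-\lambda,\ 0\}$. For each such $s$, $\cA_s$ has a unique finite-codimensional maximal ideal $\cI'_s$ by Proposition \ref{regulargrass}, and I define
\[
\tilde{\cI}_s:=\ker\bigl(\cD_\lambda(\Gr(v,w))\to(\cA_s/\cI'_s)^{\dagger,x_s}\bigr)
\]
with $x_s$ in the appropriate leaf. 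The chain $\tilde{\cI}_0\subsetneq\tilde{\cI}_1\subsetneq\cdots\subsetneq\tilde{\cI}_p\subsetneq\tilde{\cI}_{p+1}:=\cD_\lambda(\Gr(v,w))$, with $\tilde{\cI}_0=0$ (since $\cA_0=\CC$), then follows exactly as in the regular case: Proposition \ref{dagger}(b) identifies $V(\cD_\lambda(\Gr(v,w))/\tilde{\cI}_s)$ as strictly nested leaf closures, forcing the strict inclusions.

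The remaining task, which I expect to be the main obstacle, is exhaustion: every proper nonzero two-sided ideal $\cI$ in $\cD_\lambda(\Gr(v,w))$ coincides with some $\tilde{\cI}_s$. Since $V(\cD_\lambda(\Gr(v,w))/\cI)$ is Poisson hence a leaf closure $\overline{\cL_r}$, for $r$ corresponding to some $s\le p$ the regular-case argument applies: reduce to the slice via $\bullet_{\dagger,x_r}$, use that $\cI_{\dagger,x_r}$ is a proper finite-codimensional ideal of $\cA_r$ (hence equals $\cI'_r$ by the inductive application of Proposition \ref{regulargrass}), and invoke the primitive-ideal classification for $\mathcal{U}(\sln_w)$ at the appropriate (possibly singular) central character, just as at the end of the proof of Proposition \ref{regulargrass}; this forces $\cI=\tilde{\cI}_r$. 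Ruling out the remaining leaves (those for $s>p$) is the delicate step. The key numerical identity
\[
p_s := \max\{\lambda'_s+s,\ s-w_s-\lambda'_s,\ 0\} = \max\{\lambda+v,\ v-w-\lambda,\ 0\} = p
\]
(independent of $s$) says that every singular slice $\cA_s$ has, inductively, the same number $p+2$ of ideals, so the restrictions of ideals across different slices must be compatible and cannot produce more than $p+2$ distinct ideals in total. Carefully carrying out this compatibility argument, using primeness of $\cD_{\lambda'_s}(\Gr(s,w_s))$ at each level to rule out stray finite-dimensional HC bimodules arising from $\bullet_{\dagger,x_s}$, is the main technical challenge of the proof.
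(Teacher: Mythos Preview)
Your proposal follows the paper's approach closely: the slice computation, the use of Lemma \ref{cohom} to decide which $\cA_s=\cD_{\lambda+v-s}(\Gr(s,w-2v+2s))$ admit nonzero finite-dimensional representations, and the construction of the chain via $\bullet^{\dagger,x}$ are all as in the paper (which in fact gives only a brief sketch, deferring the details to the proof of Proposition \ref{regulargrass}).

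Where you diverge is in treating the exhaustion for leaves $\overline{\cL_s}$ with $s>p$ as ``the delicate step'' requiring an inductive compatibility argument. It is not delicate. If a proper ideal $\cI$ satisfied $V(\cD_\lambda(\Gr(v,w))/\cI)=\overline{\cL_s}$, then by Proposition \ref{dagger}(b) the restriction $\cI_{\dagger,x_s}$ would be a proper ideal of finite codimension in $\cA_s$, so $\cA_s$ would have a nonzero finite-dimensional module. But for $s>p$, Lemma \ref{cohom} gives $H^*(\Gr(s,w-2v+2s),\cO(\lambda+v-s))=0$, so no such module exists---contradiction. This is precisely the content of the paper's one-line remark that the slices without finite-dimensional representations ``cannot induce a two-sided ideal''. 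Once those leaves are excluded, the remaining exhaustion (for $s\le p$) goes exactly as in the regular case. Your numerical identity $p_s=p$ is correct and attractive, but not needed for the argument. A minor terminological point: the condition $s\le p$ is not that $\cA_s$ is \emph{regular} in the sense of finite homological dimension---several of these slices are themselves singular---but only that $\cA_s$ possesses a finite-dimensional representation.
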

 \begin{proof}
 	The statement can be proved analogously to Proposition \ref{regulargrass}, except that some slice algebras have no nonzero finite dimensional representations, and therefore cannot induce a two-sided ideal of $\cD_\lambda(\Gr(v,w))$.
 	
 	The quivers for the slice algebras are still those corresponding to $\Gr(s,w-2v+2s)$ with $s=0,1,\cdots,v$. The parameter for $\Gr(s,w-2v+2s)$ is now $\lambda+v-s$. Note that the global section functor $\Gamma$ induces a quotient functor sending twisted D-modules on the grassmannian to $\mathcal{U}_{\lambda'}(\sln_w)$-modules (similarly to the Beilinson-Bernstein theorem for Lie algebras). For a simple $\cD_\lambda(\Gr(v,w))$-module $L$, $\Gamma(L)$ is a nonzero finite dimensional representation if and only if $\Supp(L)\subseteq \Gr(v,w) \subseteq T^*\Gr(v,w)$. So $L$ is a line bundle. Consider the twist $\lambda+v-s$. There exists a proper two-sided ideal $\tilde{\cI}_s$ of finite codimension if and only if $$ H^*(\Gr(s,w-2v+2s),\cO(\lambda+v-s))=\bigoplus_{i\in\ZZ}H^i(\Gr(s,w-2v+2s),\cO(\lambda+v-s))\neq 0. $$  
 	
 	By Lemma \ref{cohom}, $H^*(\Gr(s,w-2v+2s),\cO(\lambda+v-s))\neq 0$ if and only if $\lambda+v-s\geq 0$ or $\lambda+v-s\leq -(w-2v+2s)$, i.e., $s\leq \max\{\lambda+v,v-w-\lambda\}$. 
 	
 	So we have a chain of ideals
 	$$ \{0\}= \tilde{\cI}_0\subsetneq \tilde{\cI}_1 \subsetneq \cdots \subsetneq \tilde{\cI}_{\max\{\lambda+v,v-w-\lambda\}} \subsetneq \tilde{\cI}_{\max\{\lambda+v,v-w-\lambda\}+1}=\cD_\lambda(\Gr(v,w)), $$
 	when $\max\{\lambda+v,v-w-\lambda\}\geq 0$. In the case when $\max\{\lambda+v,v-w-\lambda\}<0$, there are no finite dimensional representations, hence no ideals of finite codimension. So the only proper ideal is $\{0\}$ and we have $\{0\}= \tilde{\cI}_0\subsetneq \tilde{\cI}_1=\cD_\lambda(\Gr(v,w))$. Therefore we take $p=\max\{\lambda+v,v-w-\lambda,0\}$ and the theorem is true.
 \end{proof}

 \subsection{Chains of ideals: Cherednik case}
 In this section, we consider the cyclic quiver $Q$ of the affine type $\tilde{A}_{\ell-1}$ with dimension $v=n\delta$ and framing $w=\epsilon_0$ and study the restriction functor $\bullet_{\dagger,x}: \HC(\cA_\fP(v,w)) \to \HC(\hat{\cA}_{\fP}(\hat{v},\hat{w}))$. Recall by Theorem \ref{quancm}, $\cA_{\lambda^q}(v,w)\cong eH_\cc e$, where the parameter $\lambda^q$ can be recovered from $\cc$ via the formula there. In particular, the singular $\lambda^q$ corresponds to the aspherical $\cc$.
 
 First we identify the quiver $\hat{Q}$ and the slice algebra $\hat{\cA}_{\fP}(\hat{v},\hat{w})$. Consider the direct sum decomposition of a semi-simple representation $r\in T^*R(Q,v,w)\cap \mu^{-1}(\lambda)$, where $\mu:T^*R(Q,v,w)\to \fg$ denotes the moment map. Here $\lambda$ parametrizes the classical quiver variety $\cM_\lambda(v,w)$ and is generic on the linear hyperplane
 $$ d_i-d_j=\ell mc_0 $$
 parallel to the aspherical hyperplane containing $\cc$, i.e., $d_i-d_j-\ell mc_0=k$ ($s_i-s_j=m+\frac{t}{\kappa}$ in terms of $(\kappa,\ss)$-parameter with $t=\frac{i-j-k}{\ell}$, as in Section \ref{singular}), where we assume $i<j$.
 So by Theorem \ref{classcalcm}, $\lambda=\lambda^c$ and $\cc$ lies on the hyperplane $ d_i-d_j=\ell mc_0$. 
 Therefore
 $$ \lambda^c=\frac{1}{\ell}(c_0\ell-d_0+d_{\ell-1},d_0-d_1,\cdots,d_{i-2}-d_{i-1},d_{i-1}-d_j-\ell mc_0,d_j-d_{i+1}+\ell mc_0,d_{i+1}-d_{i+2},\cdots,d_{\ell-2}-d_{\ell-1}). $$
 
 By definition, representations in $T^*R(Q^w,v,w)\cap \mu^{-1}(\lambda^c)$ are in one-to-one correspondence with the representations of the {\it deformed preprojective algebra} $\Pi^{\lambda^c}(Q^w)$, defined as
 $$ \Pi^{\lambda^c}(Q^w)=\CC\overline{Q^w}/(\sum_{a\in Q^w_1}[a,a^*]-\sum_{i\in Q_0}\lambda^c_i\epsilon_i-\lambda^c_\infty\epsilon_\infty), $$ 
 where $\CC\overline{Q^w}$ denotes the path algebra of the double quiver of $Q^w$ and $a^*$ denotes the opposite arrow of $a\in Q^w_1$. In \cite{cb}, Crawley-Boevey gave a description of the dimension vectors of simple representations of $\Pi^{\lambda^c}(Q^w)$: there exists a simple representation of dimension $\tilde{v}$ if and only if
 \begin{itemize}
 	\item $\tilde{v}$ is a positive root such that $(\lambda^c,\lambda^c_\infty)\cdot \tilde{v}=0$;
 	\item Proper Decomposition Condition: $p(\tilde{v})>\sum_{i=1}^kp(\tilde{v}_{(i)})$ for any proper decomposition $\tilde{v}=\sum_{i=1}^k\tilde{v}_{(i)}$ where $p(\tilde{v})=1-\frac{1}{2}(\tilde{v},\tilde{v})$ with $(\cdot,\cdot)$ denoting the symmetrized Tits form, and $\tilde{v}_{(i)}$ non-zero dimension vectors such that $(\lambda^c,\lambda^c_\infty)\cdot \tilde{v}_{(i)}=0$.
 \end{itemize}
 Here $\lambda^c_\infty$ denotes the parameter corresponding to the vertex $\infty$. Note that since $(\lambda^c,\lambda^c_\infty)\cdot (n\delta,1)=0$, we get $\lambda^c_\infty=-n\sum_{i=0}^{\ell-1}\lambda_i=-nc_0$.
 
 Clearly, a dimension vector of the form $(v^i,0)$ satisfies the above two conditions if and only if $v^i$ satisfies the similar conditions for $Q$. Also, the imaginary roots $n'\delta$ with $n'\in\ZZ_{>0}$ and $n'\leq n$ for $Q$ do not satisfy the proper decomposition condition. All positive real roots of type $\tilde{A}_{\ell-1}$ are of the form 
 $$ v'=(a,\cdots,a,a\pm 1,\cdots,a\pm 1,a,\cdots,a), $$
 with $a\in\ZZ_{> 0}$. So the only $v'$ such that $\lambda^c\cdot v'=0$ is 
 $$ v'=(|m|,\cdots,|m|,|m|+1,\cdots,|m|+1,|m|,\cdots,|m|) $$
 with $v'_{i+1}=\cdots=v'_j=|m|+1$ and other coordinates equal to $|m|$. It is easy to verify that $v'$ satisfies the proper decomposition condition. By \cite{cb}, for the real root $v'$ there exists a unique simple $\Pi^{\lambda^c}(Q^w)$-representation $r_1$ of dimension $v'$. Therefore the direct sum decomposition we are looking for should be $r=r_0\oplus r_1\otimes\CC^s$ corresponding to the decomposition $(n\delta,1)=(v'',1)+s(v',0)$ on the dimension level. By the result of \cite{cb}, 
 $$ (v'',1)=(n-s|m|,\cdots,n-s|m|,n-s|m|-s,\cdots,n-s|m|-s, n-s|m|,\cdots,n-s|m|,1) $$
 is a root if and only if 
 $$p((v'',1))=n-s|m|-s^2\geq 0,$$
 i.e., $0\leq s\leq\sqrt{n+\frac{1}{4}m^2}-\frac{1}{2}|m|$. Set $q=\lfloor \sqrt{n+\frac{1}{4}m^2}-\frac{1}{2}|m|\rfloor$.
 
 Therefore we get the quiver $\hat{Q}$, containing one vertex (corresponding to $r_1$ and dimension $(v',0)$) and no loops (due to $p((v',0))=0$). The quiver $\hat{Q}$ comes with the dimension $\hat{v}^s=s$ and the framing
 $$ \hat{w}^s=\epsilon_0\cdot v'-(v'',v')=|m|+2s. $$
 
 It remains to compute the new parameter $\hat{\lambda}^s$. The group $G_r$ embeds into $G=\GL_n^{Q_0}$ as the stabilizer of $r=r_0\oplus r_1\otimes\CC^s$. This embedding induces $\fg_r\hookrightarrow \fg$, which gives rise to 
 $$re:(\fg^*)^G\cong \CC^\ell\to(\fg_r^*)^{G_r}\cong \CC,$$
 sending $(x_0,\cdots,x_{\ell-1})$ to $\bar{x}$ defined by $\sum_{i=0}^{\ell-1}x_i\Tr X_i=\bar{x}\Tr X$, where $(X_0,\cdots,X_{\ell-1})\in \fg$ is the image of $X\in \fg_r$ under the embedding above. So 
 $$ \bar{x}=|m|(x_0+x_1+\cdots+x_i)+(|m|+1)(x_{i+1}+\cdots+x_j)+|m|(x_{j+1}+\cdots+x_{\ell-1})=(x_0,\cdots,x_{\ell-1})\cdot v'. $$
 Recall that the new parameter $\hat{\lambda}^s$ is obtained from $\lambda^q$ via the map $\hat{re}$ defined in Section \ref{HC}, where $\lambda^q$ is recovered from $\cc$ lying on $d_i-d_j-\ell mc_0=k$ $(i<j)$ via the formula in Theorem \ref{quancm}. Therefore $$ \hat{\lambda}^s=re(\lambda^q-\varrho(n\delta,\epsilon_0))+\hat{\varrho}(s,|m|+2s)=t-s, $$
 where $t:=\frac{i-j-k}{\ell}\in\ZZ$.
 
 So far we have obtained the slice algebra $\hat{\cA}_s:=\hat{\cA}_{\hat{\lambda}^s}(\hat{v}^s,\hat{w}^s)$ with $s=0,1,\cdots,q$, which can be identified with the algebra $\cD_{t-s}(\Gr(s,|m|+2s))$ of twisted differential operators on the Grassmannian variety.
 
 Now we consider the restriction functor $\bullet_{\dagger,x}:\HC(\cA_{\fP}(v,w))\to \HC(\hat{\cA}_\fP(\hat{v},\hat{w}))$. Take $\fP:=P$ as the hyperplane corresponding to (under the $\lambda$-$\cc$ correspondence as in Theorem \ref{classcalcm}) the aspherical hyperplane. Note that $\hat{re}(P)=t-q$. So the functor becomes $\bullet_{\dagger,x}:\HC(\cA_{P}(v,w))\to \HC(\CC[P]\otimes\cD_{t-q}(\Gr(q,|m|+2q)))$. We have the following result.
 
 \begin{theorem}\label{idealbijection}
 	In terms of the notation above, the restriction functor $\bullet_{\dagger,x}:\HC(\cA_P(v,w))\to \HC(\CC[P]\otimes\cD_{t-q}(\Gr(q,|m|+2q)))$ induces a bijection between the set of all two-sided ideals in $eH_\cc e$, where $\cc$ is Weil generic on the aspherical hyperplane $d_i-d_j-\ell mc_0=k$ as in Theorem \ref{dg} (up to $\pm$ signs on $m$ and $k$ to assure $i<j$), and that of all two-sided ideals in $\cD_{t-q}(\Gr(q,|m|+2q))$, where $q=\lfloor \sqrt{n+\frac{1}{4}m^2}-\frac{1}{2}|m|\rfloor$ and $t=\frac{i-j-k}{\ell}$.
 \end{theorem}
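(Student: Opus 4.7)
The strategy is to adapt the argument of Proposition \ref{regulargrass} to the deformed setting over the parameter hyperplane $P$, using the machinery of Harish--Chandra bimodule restriction from \cite{bl}. The analysis carried out above of semisimple representations of $\Pi^{\lambda^c}(Q^w)$ shows that the symplectic leaves of $\cM_{\lambda^c}(v,w)$, for Weil generic $\lambda^c\in P$, are parametrized by $s=0,1,\dots,q$, with closure order $\overline{\cL_{s}}\subseteq\overline{\cL_{s'}}$ for $s'\leq s$ (so that $\cL_q$ is the unique closed leaf) and slice algebras $\hat{\cA}_s=\cD_{t-s}(\Gr(s,|m|+2s))$. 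The chosen point $x$ lies in the closed leaf, so its slice is precisely the target algebra $\cD_{t-q}(\Gr(q,|m|+2q))$.

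The forward map is straightforward: any two-sided ideal $\cI\subseteq\cA_P(v,w)$ is an HC bimodule, being a sub-bimodule of the regular bimodule (whose associated graded $\CC[\cM_\fp(v,w)]$ is finitely generated), so Proposition \ref{dagger}(a) gives that $\cI_{\dagger,x}$ is an ideal in $\hat{\cA}_q$, and specialization at $\cc$ descends this to $eH_\cc e$. To go the other way, for each ideal $\tilde{\cI}$ in the chain of ideals of $\cD_{t-q}(\Gr(q,|m|+2q))$ classified in Theorem \ref{singulargrass}, I define an ideal $\cI(\tilde{\cI})\subseteq\cA_P(v,w)$ as the kernel of the natural map $\cA_P(v,w)\to(\hat{\cA}_q/\tilde{\cI})^{\dagger,x}$ using the right adjoint in Proposition \ref{dagger}(c), and then specialize at $\cc$. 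Proposition \ref{dagger}(b) identifies $V(\cA_P(v,w)/\cI(\tilde{\cI}))$ with the closure of the leaf labeled by $V(\hat{\cA}_q/\tilde{\cI})$ in the slice, so that distinct $\tilde{\cI}$ produce distinct associated varieties upstairs and hence distinct, strictly nested, ideals $\cI(\tilde{\cI})$.

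The main obstacle is surjectivity of this assignment, i.e., ruling out any additional two-sided ideals of $eH_\cc e$ not of the form $\cI(\tilde{\cI})$. Given an arbitrary ideal $\cI\subseteq eH_\cc e$, the associated variety $V(eH_\cc e/\cI)$ is Poisson, hence a union of leaf closures, so it coincides with $V(eH_\cc e/\cI(\tilde{\cI}))$ for a unique minimal $\tilde{\cI}$ producing $\cI\subseteq\cI(\tilde{\cI})$. The quotient $\cI(\tilde{\cI})/\cI$ then has strictly smaller associated variety, and I will apply $\bullet_{\dagger,x'}$ for $x'$ chosen in a strictly larger leaf to reduce the classification to the slice algebra there, which is again a Grassmannian twisted differential operator algebra to which Theorem \ref{singulargrass} applies. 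This yields an induction on the depth of the associated variety; the base case (open leaf) is trivial. The delicate point is the transitivity of restriction under composition of slices and the preservation of the adjoint $\bullet^{\dagger,x}$ under specialization from $P$ to the Weil generic $\cc$: the Weil-genericity hypothesis is used precisely to guarantee that the countably many subvarieties of $P$ along which a stray extra ideal could appear (e.g., where a slice algebra would acquire a larger ideal lattice than predicted by Theorem \ref{singulargrass}) are all avoided, so that the induction closes and the bijection is forced.
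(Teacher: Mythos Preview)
Your outline follows the template of Proposition \ref{regulargrass}, and the forward map together with the construction of candidate ideals via the right adjoint $\bullet^{\dagger,x}$ is set up in the right spirit. There are, however, two genuine gaps.

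First, the surjectivity argument does not close. In Proposition \ref{regulargrass}, after reducing to a finite-dimensional $\cI_s/\cI$ on a slice, the proof finished by invoking that $\cD(\Gr(v,w))$ is a quotient of $\mathcal{U}(\sln_w)$, passing to submodules of the Verma $\Delta(0)$, and using that all nontrivial composition factors have non-dominant highest weight and are therefore infinite-dimensional. No such Lie-theoretic input is available for $eH_\cc e$. Your inductive step ``restrict to a larger leaf and apply Theorem \ref{singulargrass}'' only tells you the ideal lattice of the slice; it does not force $(\cI(\tilde{\cI})/\cI)_{\dagger,x'}=0$, and even if it did you would need faithfulness of $\bullet_{\dagger,x'}$ to conclude upstairs, which you never establish. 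Second, a smaller technical point: Proposition \ref{dagger}(c) provides $\bullet^{\dagger,x}$ only on $\HC_{fg}$, so applying it to $\hat{\cA}_q/\tilde{\cI}$ for non-maximal $\tilde{\cI}$ is not justified as stated.

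The paper avoids both issues by a different mechanism. It passes to $\mathbb{K}$-forms for a finite extension $\mathbb{K}\supset\QQ$ and then to the fraction field $\mathbb{K}(P)$ of the hyperplane. Over $\mathbb{K}(P)$ the functor $\bullet_{\dagger,x}$ at the closed leaf is \emph{faithful}: if $\cB_{\dagger,x}=0$, a $\mathbb{K}[P]$-lattice $\cB_0$ has $x\notin V(\cB_0)$, and since $x$ lies in every nonempty leaf closure generically on $P$, this forces $\cB_0$ to be $\mathbb{K}[P]$-torsion. Faithfulness gives injectivity on ideals immediately, so they form a chain. Surjectivity is then a pure counting argument: each $\cD_{t-s}(\Gr(s,|m|+2s))$ with a finite-dimensional representation is simultaneously a slice for $\cA_{\mathbb{K}(P)}(v,w)$ and for $\cD_{t-q}(\Gr(q,|m|+2q))$, so it contributes one ideal on each side via $\bullet^{\dagger}$ applied to the finite-dimensional quotient (where Proposition \ref{dagger}(c) does apply). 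Finally, descent from $\mathbb{K}(P)$ to the Weil generic $\cc$ is made precise using that $\cA_\lambda(v,w)$ has only finitely many prime ideals (appendix to \cite{es}) and that every ideal equals its square, hence is an intersection of primes already defined over $\mathbb{K}(P)$; this replaces your informal ``avoid countably many subvarieties'' with a concrete finiteness statement.
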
	
\begin{proof}
The algebra $\cD_{t-q}(\Gr(q,|m|+2q))$ is defined over $\QQ$, while the algebra $\cA_{P}(v,w)$ is defined over a finite extension of $\QQ$. Denote the finite extension by $\mathbb{K}$. Let $\cD_{t-q}^\mathbb{K}(\Gr(q,|m|+2q))$, $\cA_{P}^\mathbb{K}(v,w)$ denote the corresponding $\mathbb{K}$-forms. Note that $\cA_{P}^\mathbb{K}(v,w)$ is a $\mathbb{K}[P]$-algebra. We have a $\mathbb{K}$-algebra homomorphism $\mathbb{K}[P]\to \CC$ of evaluation at $\lambda$. Since $\cc$ is Weil generic, so is $\lambda$. This homomorphism factors through the fraction field $\mathbb{K}(P)$. Set $$\cA_{\mathbb{K}(P)}(v,w):=\mathbb{K}(P)\otimes_{\mathbb{K}[P]}\cA_{P}^\mathbb{K[P]}(v,w),$$ $$\cD_{t-q}^{\mathbb{K}(P)}(\Gr(q,|m|+2q)):=\mathbb{K}(P)\otimes_{\mathbb{K}}\cD_{t-q}^\mathbb{K}(\Gr(q,|m|+2q)).$$ Note that 
$$\CC\otimes_{\mathbb{K}(P)}\cA_{\mathbb{K}(P)}(v,w) = \cA_\lambda(v,w), $$ $$\CC\otimes_{\mathbb{K}(P)}\cD_{t-q}^{\mathbb{K}(P)}(\Gr(q,|m|+2q)) = \cD_{t-q}(\Gr(q,|m|+2q)).$$

We equip $\cA_{P}^\mathbb{K}(v,w)$ with a filtration such that $\mathbb{K}[P]$ is in degree 0 (see the proof of \cite[Lemma 5.7]{bl}). So we can talk about HC $\cA_{\mathbb{K}(P)}(v,w)$-bimodules. Note that for such a bimodule $\cB$, $\CC\otimes_{\mathbb{K}(P)}\cB$ is a HC $\cA_\lambda(v,w)$-bimodule. We get a functor $$\bullet_{\dagger,x}: \HC(\cA_{P}^{\mathbb{K}}(v,w))\to \HC(\mathbb{K}[P]\otimes\cD_{t-q}^{\mathbb{K}}(\Gr(q,|m|+2q))).$$ It induces a functor $$\bullet_{\dagger,x}: \HC(\cA_{\mathbb{K}(P)}(v,w))\to \HC(\cD_{t-q}^{\mathbb{K}(P)}(\Gr(q,|m|+2q)))$$ by base change to $\mathbb{K}(P)$. We claim that the latter functor is faithful. Indeed, assume the contrary. Let $\cB\in\HC(\cA_{\mathbb{K}(P)}(v,w))$ be such that $\cB_{\dagger,x}=0$. We can choose a $\mathbb{K}[P]$-lattice $\cB_0\subseteq\cB$ that is a HC $\cA_{\mathbb{K}(P)}(v,w)$-bimodule. We have that $(\cB_0)_{\dagger,x}$ is $\mathbb{K}[P]$-torsion. By modifying $\cB_0$ we can assume $(\cB_0)_{\dagger,x}=0$. It means that $x$ does not lie in the associated variety of $\cB_0$. By the choice of $x$, $\cB_0$ is torsion over $\mathbb{K}[P]$. This is a contradiction with the choice of $\cB_0$.

In particular, the set of two-sided ideals in $\cA_{\mathbb{K}(P)}(v,w)$ embeds into the set of two-sided ideals in $\cD_{t-q}^{\mathbb{K}(P)}(\Gr(q,|m|+2q))$, preserving inclusions. Hence, the two-sided ideals in $\cA_{\mathbb{K}(P)}(v,w)$ form a chain. 

Let us show that $\bullet_{\dagger,x}$ induces a bijection between the sets of ideals. For this we need to show that the numbers of ideals coincide. Recall that the ideals in $\cD_{t-q}(\Gr(q,|m|+2q))$ are constructed as the kernels of the maps $\cD_{t-q}(\Gr(q,|m|+2q))\to (\cD_{t-s}(\Gr(s,|m|+2s))/\Ann L_s)^{\dagger,x}$, where $L_s$ is a finite dimensional representation. The same is true over $\mathbb{K}(P)$. But $\cD_{t-s}^{\mathbb{K}(P)}(\Gr(s,|m|+2s))$ is a slice algebra for $\cA_{\mathbb{K}(P)}(v,w)$ as well. So for each $s$ such that $\cD_{t-s}(\Gr(s,|m|+2s))$ has a finite dimensional representation, we have a two-sided ideal in $\cA_{\mathbb{K}(P)}(v,w)$. All the ideals are pair-wise distinct. So we indeed get a bijection between the sets of the two-sided ideals.

Now we need to show that there is a bijection between the sets of two-sided ideals in $\cA_\lambda(v,w)$ and $\cA_{\mathbb{K}(P)}(v,w)$. Recall that $\cA_\lambda(v,w)=\CC\otimes\cA_{\mathbb{K}(P)}(v,w)$. The algebra $\cA_\lambda(v,w)$ has only finitely many prime ideals by the appendix to \cite{es}. Therefore all of them are defined over $\mathbb{K}(P)$ for a suitable choice of $\mathbb{K}$. But all ideals in $\cA_{\mathbb{K}(P)}(v,w)$ coincide with their squares. It follows that all ideals in $\cA_\lambda(v,w)$ coincide with their squares, so are intersections of prime ideals and, therefore are defined over $\mathbb{K}(P)$. This finishes the proof. 
\end{proof}

 We remark that by Theorem \ref{dg}, one can show that $1-q\leq t\leq q-1$. Therefore $1-2q\leq t-q\leq -1$ and $t-q$ is a singular twist for $\Gr(q,|m|+2q)$. The following corollary can be obtained directly from Theorem \ref{idealbijection} and Theorem \ref{singulargrass}.
 \begin{cor}\label{cor}
 	For a Weil generic aspherical $\cc$ lying on the hyperplane $d_i-d_j-\ell mc_0=k$ as in Theorem \ref{dg}, all two-sided ideals in $eH_\cc e$ form a chain
 	$$ \{0\}=\tilde{\cJ}_0\subsetneq\tilde{\cJ}_1\subsetneq\cdots\subsetneq\tilde{\cJ}_p\subsetneq \tilde{\cJ}_{p+1}=eH_\cc e,  $$
 	where $p=\max\{t,0\}$ with 
 	$t=\frac{i-j-k}{\ell}$.
 \end{cor}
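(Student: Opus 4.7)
The corollary is a direct consequence of Theorems \ref{idealbijection} and \ref{singulargrass}; my plan is simply to chain them and then compute the index $p$.

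First, Theorem \ref{idealbijection} supplies an inclusion-preserving bijection between the two-sided ideals of $eH_\cc e$ and those of $\cD_{t-q}(\Gr(q,|m|+2q))$. Its proof already observes that the ideals of $eH_\cc e$ form a chain (the functor $\bullet_{\dagger,x}$ is exact and faithful on HC bimodules, so strict inclusions of ideals are preserved under the bijection). Thus it suffices to describe the chain of ideals on the Grassmannian side and pull it back through the bijection.

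Second, I apply Theorem \ref{singulargrass} with $\lambda=t-q$, $v=q$, $w=|m|+2q$. Direct substitution gives $\lambda+v=t$ and $v-w-\lambda=-|m|-t$, so the chain
\[
\{0\}=\tilde{\cI}_0\subsetneq\tilde{\cI}_1\subsetneq\cdots\subsetneq\tilde{\cI}_{p+1}=\cD_{t-q}(\Gr(q,|m|+2q))
\]
has $p=\max\{t,\,-|m|-t,\,0\}$. To match this with $\max\{t,0\}$ as stated, I invoke the sign conventions established earlier: Theorem \ref{idealbijection} is stated up to sign flips on $m$ and $k$ enforcing $i<j$, and the end of Section \ref{supp} further allows swapping the roles of $i$ and $j$ whenever the resulting $t$ is negative. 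Under these normalizations $t\geq 0$, so $-|m|-t\leq 0\leq t$ and the middle term is dominated, giving $p=\max\{t,0\}$. Pulling back the Grassmannian chain along the bijection then produces the desired chain for $eH_\cc e$. The main obstacle is not any new structural argument but rather careful bookkeeping of sign conventions across the Dunkl--Griffeth, $(\kappa,\ss)$, and quiver parameterizations.
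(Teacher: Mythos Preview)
Your proposal is correct and follows precisely the route the paper indicates: the paper states that the corollary ``can be obtained directly from Theorem \ref{idealbijection} and Theorem \ref{singulargrass}'' after remarking that $1-q\leq t\leq q-1$ forces $t-q$ to be a singular twist, and you carry out exactly this chaining together with the substitution $\lambda=t-q$, $v=q$, $w=|m|+2q$. Your observation that the sign/index normalizations across parameterizations are the only real bookkeeping hurdle is also in line with the paper's treatment, which handles these conventions informally.
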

 
 \section{Description of $K_0(\cO^{sph}_\cc(W))$}\label{sphO}
 The category $\cO^{sph}_\cc$ was defined in \cite[3.1.3]{gl}.
 \begin{definition}
 	We say an $eH_\cc e$-module $M$ lies in the category $\cO^{sph}_\cc(W)$ if
 	\begin{itemize}
 		\item $M$ is finitely generated;
 		\item The Euler element $\hh^{sph}:=e\hh$ acts on $M$ locally finitely, where $\hh$ is defined in Section \ref{O};
 		\item the set of $\hh^{sph}$-weights on $M$ is bounded below, meaning that there exist $\alpha_1,\cdots,\alpha_k\in\CC$ such that the weight subspace $M_\alpha$ is zero whenever $\alpha-\alpha_i\notin \ZZ_{\geq 0}$ for all $i$.		
 	\end{itemize}
 \end{definition}
Clearly the assignment $M\mapsto eM$ defines a functor $\cO_\cc(W) \to \cO^{sph}_\cc(W)$ inducing a surjective map $K_0(\cO_\cc(W))\twoheadrightarrow K_0(\cO^{sph}_\cc(W))$. In this section, we describe the kernel of this map, i.e., all simples annihilated by $e=\frac{1}{|W|}\sum_{w\in W}w$, still under the assumption that $\cc$ is Weil generic on some aspherical hyperplane.

In Section \ref{supp}, we deduced that all possible supports of the simples $L_\nu\in\cO_\cc(n)$ with $\nu\in\cP_\ell(n)$ are of the form $\overline{X(W_{p,0})}$ with $p=n-r(|m|+r)$ ($0\leq r\leq q$). Denote $p_r:=n-r(|m|+r)$ and $\cJ_r:=\bigcap_\nu\Ann(L_\nu)$, where the intersection is taken over all $\nu\in\cP_\ell(n)$ such that $\dim\Supp(L_\nu)\leq p_r$. It is clear that $\cJ_r\subseteq H_\cc$ are two-sided ideals satisfying $\cJ_r\subsetneq \cJ_{r+1}$, and $\cJ_0$ is the intersection of annihilators of all simples, hence zero while $\cJ_q$ is the maximal ideal in $H_\cc$. Denote $\cJ_{q+1}:=H_\cc$ and we get a chain
\begin{align}\label{chainRCA}
\{0\}= \cJ_0\subsetneq \cJ_1 \subsetneq \cdots \subsetneq \cJ_q \subsetneq \cJ_{q+1}=H_\cc.
\end{align}

It follows that the idempotent $e\in \cJ_r$ and $e\notin\cJ_{r-1}$ for some $1\leq r\leq q+1$. We now explain how to find this $r$. For each $i=0,\cdots, q+1$, $e\cJ_i$ has to be an ideal of $eH_\cc e$ and hence coincides with some $\tilde{\cJ}_{i'}$ in Corollary \ref{cor}. Then we can prove the following.
\begin{prop}
	For any two-sided ideal $\cJ_i\subseteq H_\cc$, $e\in \cJ_i$ if and only if $e\cJ_i=eH_\cc e$.
\end{prop}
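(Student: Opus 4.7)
The plan is to verify the proposition by directly unwinding the definitions; the content is essentially formal, and the only point that requires care is the intended reading of the symbol $e\cJ_i$. Since the sentence preceding the proposition asserts that $e\cJ_i$ is a two-sided ideal of $eH_\cc e$, I interpret it as $e\cJ_i e$, which coincides with $\cJ_i\cap eH_\cc e$ because $\cJ_i$ is two-sided and $e$ is idempotent. As a preliminary sanity check I would note that $e\cJ_i e$ is indeed a two-sided ideal of $eH_\cc e$: for $y_1,y_2\in eH_\cc e$ and $x\in\cJ_i$, the product $y_1(exe)y_2=ey_1xy_2e$ lies in $e\cJ_i e$, using that $\cJ_i$ is a two-sided ideal of $H_\cc$.

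For the forward direction, assume $e\in\cJ_i$. Because $\cJ_i$ is two-sided, $ehe\in\cJ_i$ for every $h\in H_\cc$, so $eH_\cc e\subseteq\cJ_i$. Intersecting with $eH_\cc e$ then gives $eH_\cc e=\cJ_i\cap eH_\cc e=e\cJ_i$, as required.

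For the converse, assume $e\cJ_i=eH_\cc e$. Since $e$ is the multiplicative identity of $eH_\cc e$, we have $e\in eH_\cc e=e\cJ_i\subseteq\cJ_i$.

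There is no real obstacle here; the proposition is a bookkeeping statement whose purpose is to translate the problem of locating the smallest member of the chain \eqref{chainRCA} containing $e$ into the parallel problem inside $eH_\cc e$, which has already been answered by Corollary \ref{cor}. The only subtlety worth flagging explicitly in the write-up is the convention that $e\cJ_i$ denotes the contracted ideal $e\cJ_i e$ of $eH_\cc e$, and not the left $eH_\cc e$-submodule $\{ex:x\in\cJ_i\}$ of $eH_\cc$, which in general is not contained in $eH_\cc e$.
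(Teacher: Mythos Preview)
Your proof is correct and matches the paper's own argument essentially line for line: the paper proves the forward direction by noting $exe\in e\cJ_i$ for all $x\in H_\cc$, and the converse by the contrapositive ($e\notin\cJ_i\Rightarrow e\notin e\cJ_i$ while $e\in eH_\cc e$). Your added remark clarifying that $e\cJ_i$ must be read as $e\cJ_i e=\cJ_i\cap eH_\cc e$ is well taken, since the paper uses this convention implicitly.
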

\begin{proof}
	Since $\cJ_i$ is a two-sided ideal, given $e\in \cJ_i$, for any $x\in H_\cc$, $xe\in \cJ_i$. So $exe\in e\cJ_i$. Thus we have $eH_\cc e=e\cJ_i$.
	
	Suppose $e \notin \cJ_i$. Then $e\notin e\cJ_i$ either. But evidently $e\in eH_\cc e$. So $eH_\cc e\neq e\cJ_i$.
\end{proof}
Combining Corollary \ref{cor}, (\ref{chainRCA}), and the proposition above, we have the following result.
\begin{cor}
	$e\notin \cJ_i$ and $e\cJ_i=\tilde{\cJ}_i$ for $i=0,\cdots,p=\max\{0,\frac{i-j-k}{\ell}\}$; $e \in \cJ_j$ and $e\cJ_j=eH_\cc e$ for $j=p+1,\cdots,q+1$. 
\end{cor}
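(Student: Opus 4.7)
The plan is to match the two chains $(\ref{chainRCA})$ in $H_\cc$ and $\tilde{\cJ}_0 \subsetneq \cdots \subsetneq \tilde{\cJ}_{p+1}$ in $eH_\cc e$ (from Corollary \ref{cor}) through the order-preserving ``spherical reduction'' map $\phi(\cJ) := e\cJ e$. By the preceding proposition, $\phi(\cJ_r) = \tilde{\cJ}_{p+1}$ if and only if $e \in \cJ_r$, and monotonicity of $(\ref{chainRCA})$ forces $\{r : e \in \cJ_r\}$ to be an upper interval $\{r^*, \ldots, q+1\}$ for some threshold $r^*$. So the corollary reduces to the twin assertions $r^* = p+1$ and $\phi(\cJ_i) = \tilde{\cJ}_i$ for $0 \leq i \leq p$.

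For $r^* \leq p+1$ I would establish that $\phi$ preserves strict inclusions on the portion of $(\ref{chainRCA})$ below $r^*$. Given $r < r^* - 1$ (so that $e \notin \cJ_{r+1}$), the quotient $\cJ_{r+1}/\cJ_r$ is a nonzero Harish-Chandra $H_\cc$-bimodule whose associated variety contains the stratum $\overline{X(W_{p_r,0})}$ from Section \ref{xpq}. The hypothesis $e \notin \cJ_{r+1}$ forces the existence of a simple $L_\nu$ supported at this level with $eL_\nu \neq 0$, so that $e(\cJ_{r+1}/\cJ_r)e \neq 0$; equivalently $\phi(\cJ_r) \subsetneq \phi(\cJ_{r+1})$. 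The resulting strict chain of $r^*$ distinct $\phi$-values in $\{\tilde{\cJ}_0, \ldots, \tilde{\cJ}_p\}$ forces $r^* \leq p+1$.

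For the reverse bound $r^* \geq p+1$ I would argue surjectivity: every $\tilde{\cJ}_s$ with $s \leq p$ equals $\phi(\cJ_r)$ for some $r$ in $(\ref{chainRCA})$. Given $\tilde{\cJ}_s$, its two-sided lift $\cJ := H_\cc \tilde{\cJ}_s H_\cc$ satisfies $\phi(\cJ) = \tilde{\cJ}_s$, a direct check using that $\tilde{\cJ}_s$ is already a two-sided ideal of $eH_\cc e$. Matching the associated variety of $\cJ$ with the stabilizer stratification of Section \ref{xpq} places $\cJ$ inside a unique $\cJ_r$; combining $\phi(\cJ) \subseteq \phi(\cJ_r)$ with the strict-inclusion bound from the previous step forces $\phi(\cJ_r) = \tilde{\cJ}_s$. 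Together with the upper bound this yields $r^* = p+1$, and the unique order-preserving strictly-increasing bijection $\{0, \ldots, p\} \to \{0, \ldots, p\}$ must be the identity.

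The main obstacle is the strict-inclusion step --- verifying $e(\cJ_{r+1}/\cJ_r)e \neq 0$ for $r$ in the spherical range. Non-vanishing of this spherical part requires knowing that each ``regular'' support level $p_r$ with $r \leq p$ hosts a simple $L_\nu$ not killed by $e$, and translating the condition $e \notin \cJ_{r+1}$ into this concrete statement about the support filtration of simples. This is where the Weil-generic hypothesis, the singular-simple classification of Section \ref{singular}, and the depth formula of Theorem \ref{conclusion} must be carefully marshalled, taking care to avoid logical circularity with the main theorem of the paper, which derives the annihilation-by-$e$ pattern precisely from the present corollary.
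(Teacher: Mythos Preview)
Your outline is structurally reasonable but misses the mechanism the paper actually uses, and the gap you yourself flag is real and cannot be closed along the lines you suggest.

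The paper does not argue by counting or by preservation of strict inclusions. Instead it pins down $e\cJ_s$ inside the chain of Corollary~\ref{cor} by a direct \emph{associated-variety dimension} match. From the construction of $\tilde{\cJ}_s$ as (the base change of) the kernel of $\cA_{\mathbb{K}(P)}(v,w)\to(\cD_{t-s}(\Gr(s,|m|+2s))/\Ann L_s)^{\dagger,x}$, together with \cite[Lemma~4.4, Remark~4.5]{Lholonomic}, one gets $\dim V(eH_\cc e/\tilde{\cJ}_s)=\dim\cL_s=2p_s$. On the $H_\cc$-side, $\dim V(H_\cc/\cJ_s)=2p_s$ by construction, and whenever $e(H_\cc/\cJ_s)\neq 0$ one has $\dim V(eH_\cc e/e\cJ_s)=\dim V(H_\cc/\cJ_s)=2p_s$ as well. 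Since by Corollary~\ref{cor} the ideals of $eH_\cc e$ form a chain with pairwise distinct quotient associated-variety dimensions $2p_0>\cdots>2p_p$, this numerical match forces $e\cJ_s=\tilde{\cJ}_s$ for $s\le p$ and rules out $e\notin\cJ_s$ for $s>p$ (there is no ideal with quotient of dimension $2p_s$).

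Your proposed route, by contrast, tries to deduce the strict inclusion $e\cJ_r e\subsetneq e\cJ_{r+1}e$ from the existence of a simple $L_\nu$ at support level $p_r$ with $eL_\nu\neq 0$. As you correctly worry, that is exactly the output of the main theorem (the next corollary in the paper), so invoking it here is circular. Your surjectivity step via the lift $H_\cc\tilde{\cJ}_sH_\cc$ is fine as far as it goes, but it only places the lift inside some $\cJ_r$ and gives $\tilde{\cJ}_s\subseteq\phi(\cJ_r)$, not equality; without a dimension or associated-variety invariant to separate the $\tilde{\cJ}_i$, you cannot conclude $\phi(\cJ_r)=\tilde{\cJ}_s$. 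The missing ingredient is precisely the computation $\dim V(eH_\cc e/\tilde{\cJ}_s)=2p_s$ from the slice picture, which gives a numerical label on each ideal of $eH_\cc e$ and makes the matching automatic without any appeal to which simples $e$ kills.
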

\begin{proof}
	Recall that each two-sided ideal $\tilde{\cJ}_s\subseteq eH_\cc e=\cA_\lambda(v,w)$ is obtained as $\CC\otimes J_s$ where  $J_s$ is the kernel of the map $\cA_{\mathbb{K}(P)}(v,w)\to(\cD_{t-s}(\Gr(s,|m|+2s))/\Ann L_s)^{\dagger,x}$, where $L_s$ is the unique nonzero finite dimensional representation of the slice algebra $\cD_{t-s}(\Gr(s,|m|+2s))$. So $\dim V(eH_\cc e/\tilde{\cJ}_s) =\dim V(\cA_{\mathbb{K}(P)}(v,w)/J_s)$.
	It follows from \cite[Lemma 4.4, Remark 4.5]{Lholonomic} that
	$$\dim V(\cA_{\mathbb{K}(P)}(v,w)/J_s) =\dim\cL_s =2p_s,$$
	where $\cL_s$ denotes the symplectic leaf through $x$. By construction, $\dim V(H_\cc /\cJ_s)=2p_s$. So we have that $\dim V(eH_\cc e/\tilde{\cJ}_s) = \dim V(H_\cc /\cJ_s)$.
	
	Also note that if $e(H_\cc/\tilde{\cJ}_s)\neq 0$, then $$ \dim V(H_\cc/\cJ_s) =\dim V(e(H_\cc/\cJ_s)) = \dim V(eH_\cc e/(e\cJ_s))$$.
	
	Therefore we have $e\cJ_i=\tilde{\cJ}_i$ for $i=0,\cdots,p=\max\{0,\frac{i-j-k}{\ell}\}$. Other claims in the statement are straightforward.
\end{proof}
Further, considering the construction of the ideals in (\ref{chainRCA}), we obtain the following.
\begin{cor}
	Suppose the parameter $\cc$ for the cyclotomic RCA $H_\cc$ is Weil generic on the aspherical hyperplane $d_i-d_j-\ell mc_0=k$ (or in terms of the $\ss$-parameters $s_i-s_j=m+\frac{t}{\kappa}$, with $t=\frac{i-j-k}{\ell}$).
	Then the averaging idempotent $e\in H_\cc$ annihilates all simples in $\cO_\cc$ with supports of dimension not exceeding $n-(p+1)(|m|+p+1)$, where $p=\max\{0,t\}$. The kernel of the map $K_0(\cO_\cc(W))\twoheadrightarrow K_0(\cO^{sph}_\cc(W))$ is generated by all classes of such simples.
\end{cor}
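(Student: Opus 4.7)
The plan is to derive both assertions from the chain $\{0\}=\cJ_0\subsetneq\cdots\subsetneq\cJ_{q+1}=H_\cc$ in (\ref{chainRCA}), together with the preceding corollary which pins down $e\in\cJ_j$ exactly for $j\ge p+1$.

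The first assertion is immediate from that corollary: $e\in\cJ_{p+1}=\bigcap_\nu\Ann(L_\nu)$ where $\nu$ ranges over $\cP_\ell(n)$ satisfying $\dim\Supp L_\nu\le p_{p+1}=n-(p+1)(|m|+p+1)$, so $e\in\Ann(L_\nu)$ and hence $eL_\nu=0$ for every such simple.

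For the Grothendieck-group assertion I would use the standard fact that the exact functor $M\mapsto eM=M^W$ takes each simple of $\cO_\cc$ either to zero or to a simple of $\cO^{sph}_\cc$, with the non-zero images exhausting $\Irr(\cO^{sph}_\cc)$. The induced surjection $K_0(\cO_\cc)\twoheadrightarrow K_0(\cO^{sph}_\cc)$ then has kernel freely spanned over $\ZZ$ by $\{[L_\nu]:eL_\nu=0\}$, so the assertion reduces to the converse of the first: every $\nu$ with $\dim\Supp L_\nu>p_{p+1}$ satisfies $eL_\nu\ne 0$.

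I would prove this converse by contradiction, using the associated-variety technology from the preceding corollary's proof. Suppose $\dim\Supp L_\nu=p_r$ with $r\le p$ and $e\in\Ann(L_\nu)$, and put $B=H_\cc/\Ann(L_\nu)$. Since $\Ann(L_\nu)$ is a right ideal containing $e$, we have $eH_\cc\subseteq\Ann(L_\nu)$, forcing $eB=0$; on the other hand $B$ is Harish-Chandra with $V(B)=\overline{\cL_r}$ of dimension $2p_r$, by \cite[Lemma 4.4, Remark 4.5]{Lholonomic}. Applying the restriction functor $\bullet_{\dagger,x}$ of Section \ref{HC} at a generic point $x\in\cL_r$, Proposition \ref{dagger}(b) produces a nonzero Harish-Chandra bimodule over the slice algebra $\cD_{t-r}(\Gr(r,|m|+2r))$. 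The main obstacle is to transfer the vanishing $eB=0$ to the slice level: since $\bullet_{\dagger,x}$ is defined between spherical subalgebras, one must first descend $B$ to an $eH_\cc e$-bimodule and then match $eB=0$ with a vanishing condition on the slice that contradicts Theorem \ref{singulargrass}'s ideal classification for $r\le p$. Once that bookkeeping is carried through, the contradiction completes the proof.
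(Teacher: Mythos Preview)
Your argument for the first assertion is correct and is exactly what the paper does: $e\in\cJ_{p+1}$ from the preceding corollary, together with the definition of $\cJ_{p+1}$ as an intersection of annihilators, gives $eL_\nu=0$ for every $\nu$ with $\dim\Supp L_\nu\le p_{p+1}$.

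For the converse needed in the Grothendieck-group statement, your contradiction strategy via $B=H_\cc/\Ann(L_\nu)$ does not go through as written. The functor $\bullet_{\dagger,x}$ of Section~\ref{HC} is constructed for bimodules over $\cA_\lambda(v,w)\cong eH_\cc e$, not over $H_\cc$; to apply it you must first pass from $B$ to the $eH_\cc e$-bimodule $eBe$. But under your contradiction hypothesis $e\in\Ann(L_\nu)$, the image of $e$ in $B$ is zero, hence $eBe=0$, and restricting the zero bimodule produces nothing to contradict. What you flag as ``the main obstacle'' and defer as ``bookkeeping'' is in fact the whole difficulty: the moment you descend to the spherical side, all information about $B$ is lost precisely because of the vanishing you are trying to exploit. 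An argument of this shape would require a restriction functor defined directly on $\HC(H_\cc)$ (such functors do exist, via completions of $H_\cc$ itself as in \cite{be}, \cite{Lcompletion}), rather than the spherical version used in this paper; alternatively one would need to know that the $\cJ_r$ exhaust the two-sided ideals of $H_\cc$, so that $H_\cc eH_\cc$ is forced to equal $\cJ_{p+1}$.

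The paper itself treats the corollary as following immediately from the preceding corollary and the construction of the chain~(\ref{chainRCA}), and does not spell out the converse direction. In that sense your proposal is more explicit about what remains to be verified, but the mechanism you propose does not close the gap.
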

So as the constant term $k$ (or $t$ in terms of $\ss$-parameters) in the equation of the aspherical hyperplane increases, the number of ideals in $eH_\cc e$ decreases, so does the number of simples in $\cO_\cc(W)$ annihilated by $e$. When $k$ reaches the boundary such that $p=0$, there are no proper nontrival ideals at all. 

Finally we remark that our results prove the Grassmannian case and Cherednik case when $\cc$ is Weil generic and aspherical in \cite[Conjecture 11.8]{bl}.


\begin{thebibliography}{99}
\bibitem[AK]{ak} S. Ariki and K. Koike, {\it A Hecke algebra of $(\ZZ/r\ZZ)\wr\fS_n$ and construction of its irreducible representations.} Adv. Math., 106 (1994), 216-243.
\bibitem[BB]{bb} W. Borho and J.-L. Brylinski, {\it Differential Operators on Homogeneous Spaces. I} Inv. Math. 69 (1982), 437-476
\bibitem[BE]{be} R. Bezrekavnikov and P. Etingof, {\it Parabolic induction and restriction functors for rational Cherednik algebras.} Int. Math. Res. Not. 19 (2003), 1053-1088.
\bibitem[BEG]{beg} Y. Berest, P. Etingof and V. Ginzburg, {\it Finite-dimensional representations of rational Cherednik algebras.} Int. Math. Res. Not. 19 (2003), 1053–1088
\bibitem[BK]{bk} A. Berenstein, D. Kazhdan. {\it Geometric and unipotent crystals. II. From unipotent bicrystals to crystal bases.} Quantum groups, 13-88, Contemp. Math., 433, Amer. Math. Soc., Providence, RI, 2007.
\bibitem[BL]{bl} R. Bezrukavnikov and I. Losev, {\it Etingof conjecture for quantized quiver varieties.} (2013) arxiv:1309.1716. 
\bibitem[BPW]{bpw} T. Braden, N. Proudfoot and B. Webster, {\it Quantizations of conical symlectic resolutions I: local and global structure.} (2012) arxiv:1208.3863
\bibitem[CB]{cb} W. Crawley-Boevey, {\it Geometry of the moment map for representations of quivers.} Composition Math. 126(2001), 257-293
\bibitem[CR]{cr} J. Chuang and R. Rouquier, {\it Derived equivalences for symmetric groups and $\sln_2$-categorifications.} Ann. Math. (2) 167(2008), n.1, 245-298
\bibitem[DG]{dg} C. Dunkl and S. Griffeth, {\it Generalized Jack polynomials and the representation theory of rational Cherednik algebras.} Selecta Math. 16 (2010), 791-818.
\bibitem[E1]{e} P. Etingof, {\it Symplectic reflection algebras and affine Lie algebras.} Mosc. Math. J. 12(2012), 543-565.	
\bibitem[E2]{e2} P. Etingof, {\it Proof of the Broue-Malle-Rouquier Conjecture in Characteristic Zero (After I. Losev and I. Martin -- G. Pfeiffer).} (2016) arXiv: 1606.08456
\bibitem[EG]{eg} P. Etingof and V. Ginzburg, {\it Symplectic reflection algebras, Calogero-Moser space and deformed Harish-Chandra homomorphism.} Invent. Math. 147 (2002), 243-348.
\bibitem[EGGO]{eggo} P. Etingof, W. L. Gan, V. Ginzburg, A. Oblomkov, {\it Harish-Chandra homomorphisms and symplectic reflection algebras for wreath-products.} Publ. Math. IHES, 105 (2007), 91-155.
\bibitem[ES]{es} P. Etingof, T. Schedler, {\it Poisson traces and D-modules on Poisson varieties} with an {\it Appendix} by I. Losev, GAFA 20(4), 958-987 (2010).
\bibitem[Ge]{ge} T. Gerber, {\it Triple crystal action in Fock spaces} (2016) arXiv: 1601.00581 
\bibitem[Gi1]{gi} V. Ginzburg, {\it Harish-Chandra bimodules for quantized Slodowy slices.} Repres. Theory 13(2009), 236-271.
\bibitem[Gi2]{gi2} V. Ginzburg, {\it Lectures on Nakajima quiver varieties.} (2009) arXiv:0905.0686
\bibitem[Go1]{g2} I. Gordon, {\it A remark on rational Cherednik algebras and differential operators on the cyclic quiver.} Glasgow Math. J. 48 (2006), 145-160
\bibitem[Go2]{g} I. Gordon, {\it Quiver varieties, category $\cO$ for rational Cherednik algebras, and Hecke algebras.} (2007) arXiv:0703150.
\bibitem[Gr]{gr} S. Griffeth, {\it Orthogonal functions generalizing Jack polynomials.} Transactions of the American Mathematical Society Volume 362, Number 11 (2010), 6131–6157.
\bibitem[GGOR]{ggor} V. Ginzburg, N. Guay, E. Opdam and R. Rouquier, {\it On the category $\cO$ for rational Cherednik algebras} Invent. Math. 154 (2003), 617-651.
\bibitem[GL]{gl} I. Gordon and I. Losev, {\it On category $\cO$ for cyclotomic rational Cherednik algebra.} J. Eur. Math. Soc. 16 (2014), 1017-1079.
\bibitem[GN]{gn} T. Gerber and E. Norton, {\it The $\sln_\infty$-crystal Combinatorics of Higher Level Fock Spaces.} (2017) arXiv:1704.02169
\bibitem[GS]{gs} I. Gordon and J. T. Stafford, {\it Rational Cherednik algebras and Hilbert schemes.} Adv. Math. 198 (2015), 222-274.
\bibitem[JL]{jl} N. Jacon and C. Lecouvey, {\it Crystal isomorphisms and wall-crossing maps for rational Cherednik algebras.} (2016) arXiv: 1603.02220. 
\bibitem[KKS]{kks} D. Kazhdan, B. Kostant and S. Sternberg, {\it Hamiltonian group actions and dynamical systems of Calogero type.} Comm. Pure Apll. Math., 31 (1978), 481-507.
\bibitem[L1]{Lslice} I. Losev, {\it Symplectic slices for reductive groups.} Mat. Sbornik 197(2006), N2, 75-86 (in Russian). English translation in Sbornik Math. 197(2006), N2, 213-224.
\bibitem[L2]{Lw} I. Losev, {\it Finite dimensional representations of W-algebras.} Duke Math. J. 159(2011), n.1, 99-143. 
\bibitem[L3]{Lcompletion} I. Losev, {\it Completion of symplectic reflection algebras.} Selecta Math., 18(2012), N1, 179-251.
\bibitem[L4]{Lquantization} I. Losev, {\it Isomorphisms fo quantizations via quantization of resolutions.} Adv. Math. 231(2012), 1216-1270.
\bibitem[L5]{Lfunctors} I. Losev, {\it On isomorphisms of certain functors for Cherednik algebras.} Repres. Theory, 17 (2013), 247-262.
\bibitem[L6]{Lcrystals} I. Losev, {\it Highest weight $\sln_2$-categorifications I: crystals.} Math. Z. 274(2013), 1231-1247.
\bibitem[L7]{Lderived} I. Losev, {\it Derived equivalences for rational Cherednik algebras.} Duke Math J. 166(2017), N1, 27-73.
\bibitem[L8]{Lhecke} I. Losev, {\it Finite dimensional quotients of Hecke algebras.} Algebra and Number theory, 9 (2015), 493-502.
\bibitem[L9]{Lsupport} I. Losev, {\it Supports of simple modules in cyclotomic Cherednik category $\cO$.} arXiv: 1509.00526.
\bibitem[L10]{Lsurvey} I. Losev, {\it Rational Cherednik algebras and categorification.} Contemp. Math. 683, ``Categorification and Higher Representation theory'', A. Beliakova, A. Lauda, eds, 1-41
\bibitem[L11]{Lholonomic} I. Losev, {\it Bernstein inequality and holonomic modules.} Adv. Math. 308 (2017), 941-963.
\bibitem[MN]{mn} K. McGerty and T. Nevins, {\it Derived equivalence for quantum symplectic resolutions.} Selecta Math. 20 (2014), 675-717.
\bibitem[N1]{n1} H. Nakajima, {\it Instantons on ALE spaces, quiver varieties and Kac-Moody algebras.} Duke Math. J. 76 (1994), 365-416.
\bibitem[N2]{n2} H. Nakajima, {\it Quiver varieties and Kac-Moody algebras.} Duke Math. J. 91 (1998), 515-560.
\bibitem[R]{r2} R. Rouquier, {\it 2-Kac-Moody algebras.} arXiv:0812.5023.
\bibitem[S]{s} P. Shan {\it Crystals of Fock spaces and cyclotomic rational double affine Hecke algebras.} Ann. Sxi. Ecole Norm. Sup. 44 (2011), 147-182.
\bibitem[SV]{sv} P. Shan and E. Vasserot, {\it Heisenberg algebras and rational double affine Hecke algebras.} J. Amer. Math. Soc. 25 (2012), 959-1031.
\bibitem[U]{u} D. Uglov. {\it Canonical bases of higher-level q-deformed Fock spaces and Kazhdan-Lusztig polynomials.} Physical Combinatorics (Kyoto, 1999), pp. 249299, Birkhauser, 2000.
\bibitem[W]{w} G. Wilson, {\it Collisions of Calogero-Moser particles and an adelic Grassmannian.} Inv. Math. 133 (1988), 1-41.
\end{thebibliography}
\end{document}